\newcommand{\cstar}{$C^*$-al\-ge\-bra}
\newcommand{\cp}{\mathcal{P}}
\newcommand{\ch}{\mathcal{H}}
\newcommand{\ci}{\mathcal{I}}
\newcommand{\cz}{\mathcal{Z}}
\newcommand{\ca}{\mathcal{A}}
\newcommand{\cb}{\mathcal{B}}
\newcommand{\cg}{\mathcal{G}}
\newcommand{\ck}{\mathscr{K}}
\newcommand{\ct}{\mathscr{T}}
\newcommand{\tk}{\tilde{K}}
\newcommand{\tf}{\tilde{F}}
\newcommand{\NN}{\mathbb{N}}
\newcommand{\C}{\mathbb{C}}
\newcommand{\R}{\mathbb{R}}
\newcommand{\Z}{\mathbb{Z}}
\newcommand{\srem}{s^R}
\newcommand{\codiag}{\underrightarrow{Diag}}
\newcommand{\diag}{\underleftarrow{Diag}}
\newcommand{\colim}{\underrightarrow{lim}}
\newcommand{\llim}{\underleftarrow{lim}}
\newcommand{\ad}{\mathrm{Ad}_u}
\newcommand{\Spec}{\mathrm{Spec}}
\newcommand{\op}{\mathsf{op}}
\newcommand{\Exists}[1]{\exists{#1}\boldsymbol{.}\;}
\def\simMvN{\sim_{_\mathbf{M}}}
\def\leqMvN{\preccurlyeq_{_\mathbf{M}}}
\def\geqMvN{\succcurlyeq_{_\mathbf{M}}}
\def\simu{\sim_{_\mathbf{u}}}
\def\lequ{\preccurlyeq_{_\mathbf{u}}}
\def\V#1{V_{#1}}
\def\P#1{\Phi(#1)}
\def\Pf{\Phi}  
\def\Pzf{\Phi_z}  
\def\Pp#1{\Phi_p(#1)}  
\def\Ppf{\Phi_p}  
\def\II{\ci}
\def\TT{\ct}
\def\cat#1{\mathbf{\mathsf{#1}}}
\def\Spec{\mathrm{Spec}}
\def\Prim{\mathrm{Prim}}
\def\Hom{\mathrm{Hom}}
\def\dim{\mathrm{dim}}
\def\dom{\mathrm{dom}}
\def\cod{\mathrm{cod}}
\def\ker{\mathrm{ker}}
\newcommand{\catC}{\cat{C}}
\newcommand{\catA}{\cat{A}}
\newcommand{\catB}{\cat{B}}
\newcommand{\longhookrightarrow}{\ensuremath{\lhook\joinrel\relbar\joinrel\rightarrow}}
\newcommand{\setdef}[2]{\left\{#1 \mid #2\right\}}             
\newcommand{\fdec}    [3]{#1\colon #2 \longrightarrow #3}
\newcommand{\incfdec} [3]{#1\colon #2 \longhookrightarrow #3}
\newcommand{\Mdot}{ \text{ .}}
\newcommand{\Mcomma}{ \text{ ,}}
\newcommand{\Msemicolon}{ \text{ ;}}
\newcommand{\Mand}{\quad\quad \text{ and } \quad\quad}
\newcommand{\Miff}{\quad\quad \text{ iff } \quad\quad}
\newcommand{\Gelfand}{Gel\textquotesingle\-fand}
\newcommand{\Neumark}{Na\u{\i}\-mark}
\newcommand{\Naimark}{Na\u{\i}\-mark}
\newcommand{\SDF}{G}
\newcommand{\lift}[1]{#1}
\begin{document}
\title{Contextuality and Noncommutative Geometry in Quantum Mechanics}
\author{Nadish de Silva\inst{1} \and Rui Soares Barbosa\inst{2} }
\authorrunning{N. de Silva \and R. S. Barbosa}
\institute{Department of Computer Science, University College London,
Gower Street, London WC1E 6BT
\\ \email{nadish.desilva@utoronto.ca}
\and
Department of Computer Science, University of Oxford,
Wolfson Building, Parks Road, Oxford OX1 3QD
\\ \email{rui.soares.barbosa@cs.ox.ac.uk} }
\date{7th June 2018}
%
\communicated{}
\maketitle
\begin{abstract}
Observable properties of a classical physical system can be modelled deterministically as functions from the space of pure states to outcomes; dually, states can be modelled as functions from the algebra of observables to outcomes.  The probabilistic predictions of quantum physics are contextual in that they preclude this classical assumption of reality: noncommuting observables, which are not assumed to be comeasurable, cannot be consistently ascribed deterministic values even if one enriches the description of a quantum state.

Here, we consider the geometrically dual objects of noncommutative operator algebras of observables as being generalisations of classical (deterministic) state spaces to the quantum setting and argue that these generalised geometric spaces represent the objects of study of noncommutative operator geometry.  By adapting the spectral presheaf of Hamilton--Isham--Butterfield, a formulation of quantum state space that collates contextual data, we reconstruct tools of noncommutative geometry in an explicitly geometric fashion.  In this way, we bridge the foundations of quantum mechanics with the foundations of noncommutative geometry \`a la Connes et al.

To each unital $C^*$-algebra $\ca$ we associate a geometric object---a diagram of topological spaces collating quotient spaces of the noncommutative space underlying $\ca$---that performs the role of a generalised \Gelfand\ spectrum. 
We show how any functor $F$ from compact Hausdorff spaces to a suitable target category can be applied directly to these geometric objects to automatically yield an extension $\tilde{F}$ acting on all unital $C^*$-algebras.

This procedure is used to give a novel formulation of the operator $K_0$-functor via a finitary variant $\tilde K_f$ of the extension $\tk$  of the topological $K$-functor.

We then delineate a $C^*$-algebraic conjecture that the extension of the functor that assigns to a topological space its lattice of open sets assigns to a unital \cstar\ the Zariski topological lattice of its primitive ideal spectrum, i.e. its lattice of closed, two-sided ideals.
We prove the von Neumann algebraic analogue of this conjecture.
\end{abstract}

\newpage
\setcounter{tocdepth}{3}
\tableofcontents
\newpage

\section{Introduction}\label{sec:intro}

The mathematical description of classical physical systems
exhibits an elegant interplay between logico-algebraic aspects
of observables (and propositions), reflecting the arithmetic of quantities,
and topologico-geometrical aspects of a space of states.
A system can be described in two (dually) equivalent ways, depending on whether one takes states or observables as primary.
Adopting a \emph{realist} or \emph{ontological} perspective, one starts with a space of states,
and constructs observables as (continuous) functions from states to scalar values.
Conversely, adopting an \emph{operational} or \emph{epistemic} perspective, one starts with an algebra of observables and constructs states as (homomorphic) functions from observables to  scalar values.
Such state-observable dualities are manifestations of the duality between geometry and algebra that is a common thread running throughout mathematics.  We are particularly interested in the interplay described by the \Gelfand--\Naimark\ duality \cite{GelfandNaimark} between the categories of
unital commutative $C^*$-algebras (of observables) and compact Hausdorff spaces (of states).

Quantum systems are described by their $C^*$-algebra of observables which, by the \Gelfand--\Neumark--Segal construction \cite{GelfandNaimark,Segal47}, can be represented as an algebra of Hilbert space operators.
However, as quantum algebras are noncommutative, \Gelfand--\Naimark\ duality cannot be used to obtain a geometric description as in the classical case.
Indeed, pure quantum states do not ascribe deterministic values to all observables; rather, a quantum state yields for each observable a probability distribution on the various outcomes possible upon measurement. 

The inherently probabilistic nature of quantum mechanics has discomfited advocates of \emph{physical realism} since the theory's inception.
Einstein \cite{EPR}, in his famous foundational debates with Bohr, argued that the quantum state does not provide a `complete description' of a system.  These debates led to the study of \emph{hidden variable models} of quantum theory: i.e. ones in which quantum states are represented as probability distributions over a space of  more fundamental \emph{ontic states} that yield deterministic values for all observables.  
Motivated by a desire to hold onto realism, one may insist that a hidden variable model be \emph{noncontextual}: that the values of the system's observable properties be independent of the precise method of observation, and,
in particular, of which other observables are measured simultaneously.
However, the no-go theorem of Bell--Kochen--Specker \cite{bell1966,KochenSpecker}
rules out hidden variable models of this kind,
showing that \emph{contextuality} is a necessary feature of any theory reproducing the highly-verified empirical predictions of quantum mechanics.

The primary motivation of this work is to study a candidate geometric notion of state space for quantum systems that maintains as closely as possible a realist perspective in the sense alluded to above.  In pursuing this, we identify and explore a connection with the well-studied mathematical field of noncommutative geometry: our geometric notion of state space will be the geometric dual of a noncommutative algebra of observables.  Our desired geometric construction must necessarily account for contextuality as an obstacle towards a naively ontological quantum state space.

Our starting point is the \emph{spectral presheaf} formulation of the Bell--Kochen--Specker theorem.
Hamilton, Isham and Butterfield \cite{oldtopos1,oldtopos3} associate to a von Neumann algebra
a presheaf of compact Hausdorff spaces, varying over \emph{contexts} (commutative von Neumann subalgebras representing sets of
comeasurable observables).
The Kochen--Specker theorem finds expression as the nonexistence of a global section of points (i.e. a global point in the generalised `space'),
whereas Gleason's theorem can be expressed as a correspondence between quantum states and global sections of probability distributions (i.e. a global probability distribution on the generalised `space').
These observations strongly suggest the role the spectral presheaf might play
as a notion of quantum state space that fundamentally incorporates contextuality.
Indeed, this idea forms the basis of
a considerable body of research into topos-theoretic approaches to quantum physics
by Isham, D\"{o}ring, et al. \cite{oldtopos1,oldtopos2,oldtopos3,oldtopos4,newtopos1,newtopos2,newtopos3,newtopos4} and by Heunen, Landsman, Spitters, et al. \cite{Dutch,toposdutch-bohrlogic,toposdutch-deepbeauty}.
More recent developments \cite{DoeringHarding:AbelianSubalgebrasAndTheJordanStructureOfAvNAlgebra,Doering2012:GeneralisedGelfandSpectra,Doering2012:FlowsOnGeneralisedGelfandSpectra,Barbosa:DPhil}
pursue the idea of regarding spectral presheaves as providing a generalised notion of space dual to noncommutative von Neumann algebras.

We directly relate this body of research to the programme of noncommutative operator geometry of Connes et al. \cite{connes},
in which mathematicians regard noncommutative operator algebras as generalised geometric spaces.
The result is a plethora of generalisations of geometric tools to the noncommutative algebraic setting that are constructed indirectly via \Gelfand--\Naimark\ duality.

Our contribution is to associate diagrams of topological spaces, akin to the spectral presheaf, to noncommutative algebras and use them to give direct geometric formulations of notions from noncommutative geometry.
We argue this is necessary for any concretely spatial object to be considered a quantum state space in the sense of being dual to a noncommutative algebra.
Physically, the topological spaces in the diagram associated to an algebra can be thought of as state spaces for sets of compatible observables.
Mathematically, they are precisely those quotient spaces of the `noncommutative space' represented by the algebra that are tractable in the sense of being (classical) topological spaces.

The general scheme is as follows:
given a concept defined on (compact Hausdorff) topological spaces (corresponding to unital commutative $C^*$-algebras), one lifts it from the contexts to a global concept by taking a limit,
thus yielding a corresponding extension defined for all unital $C^*$-algebras.
In order to support the connection between the global concepts defined via direct extension and those defined indirectly via \Gelfand--\Naimark\ duality,
we apply this template to extend two different concepts: 
$K$-theory and open sets.
First, we show how a finitary variant of the extension $\tilde{K}$ of the topological $K$-functor yields a novel formulation of the operator $K_0$-functor.
Secondly, we conjecture a correspondence between the extension of the notion of open sets and two-sided ideals of the algebra,
and prove the von Neumann algebraic version of this conjecture.

\subsection*{Summary of main results}  
We define the category $\diag(\catC)$ whose objects are diagrams in the category $\catC$,
i.e. functors from any small category to $\catC$.
We then introduce $\fdec{\llim}{\diag(\catC)}{\catC}$ when $\catC$ is complete.
This generalises the usual limit functors for diagrams of a fixed shape.
These constructions have duals denoted $\codiag(\catC)$ and $\fdec{\colim}{\codiag(\catC)}{\catC}$ when $\catC$ is cocomplete.

The \emph{spatial diagram functor} $\fdec{G}{\cat{uC^*}^\op}{\diag(\cat{KHaus})}$ associates a diagram of compact Hausdorff spaces to each unital $C^*$-algebra:
the objects in the diagram $G(\ca)$ are the \Gelfand\ spectra of unital commutative sub-$C^*$-algebras of $\ca$, while the morphisms 
arise from inner automorphisms of $\ca$.  
We shall also consider some variations (finitary $C^*$-algebraic and von Neumann algebraic) of this construction.

For any functor $\fdec{F}{\cat{KHaus}}{\catC}$ to a complete target category,
we define an \emph{extension} $\fdec{\tilde{F}}{\cat{uC^*}^\op}{\catC}$ that acts on a unital $C^*$-algebra $\ca$ by applying $F$ (lifted to diagrams) to the diagram $G(\ca)$ and then taking the limit:
\[\tilde{F} = \llim \circ F \circ G \; \colon \; \cat{uC^*}^\op \longrightarrow \diag(\cat{KHaus}) \longrightarrow \diag(\catC) \longrightarrow \catC \Mdot\]
The functor $\tilde{F}$ extends $F$.  By this, we mean that the two functors agree on unital commutative $C^*$-algebras:
\[\tf |_{\cat{ucC^*}}  \simeq F \circ \Sigma \Mcomma\]
where the functor $\fdec{\Sigma}{\cat{uC^*}^\op}{\cat{KHaus}}$ maps a unital commutative $C^*$-algebra to its \Gelfand\ spectrum.

We compare the extension of important topological concepts with their existing generalisation in noncommutative geometry.
First, we consider the topological $K$-functor, $\fdec{K}{\cat{KHaus}^\op}{\cat{Ab}}$, and give a novel formulation of operator $K$-theory, $\fdec{K_0}{\cat{C^*}}{\cat{Ab}}$ via a finitary variant $\tilde K_f$ of $\tk$, the extension of the topological $K$-functor:\footnote{The category $\cat{Ab}$ of abelian groups is cocomplete. So, $K$ can be seen as a functor from $\cat{KHaus}$ to a complete category, $\cat{Ab}^\op$. This can be extended as explained earlier to all unital $C^*$-algebras, yielding a functor $\cat{uC^*}\longrightarrow\cat{Ab}$. Note that a limit in the category $\cat{Ab^\op}$ is a colimit in $\cat{Ab}$. This is then extended to the category $\cat{C^*}$ of all (i.e. not-necessarily-unital) \cstar s via unitalisation, in the same fashion that $K_0$ is.}

\begin{restatable*}{theorem}{thmKtheory}
\label{thm:Ktheory}
$K_0 \simeq \tk_{f}$ when restricted to the full subcategory of stabilisations of unital $C^*$-algebras.
Equivalently, $K_0 \simeq K_0 \circ \ck \simeq \tk_{f} \circ \ck$ as functors $\cat{uC^*} \longrightarrow \cat{Ab}$.
Consequently, $\fdec{K_0}{\cat{C^*}}{\cat{Ab}}$ is naturally isomorphic to the extension via unitalisation of the functor $\tk_{f} \circ \ck$.
\end{restatable*} 
In the statement above,
$\ck$ is the stabilisation functor and $\tk_{f}$ is the finitary version of $\tk$,
in the sense that the extension of $K$ is obtained, for a unital $C^*$-algebra $\ca$,
via a diagram $G_f(\ca)$ of the \Gelfand\ spectra of its unital, \emph{finite-dimensional}, commutative sub-$C^*$-algebras.
Since stable $C^*$-algebras are nonunital, this then needs to be extended to all $C^*$-algebras, which is done via unitalisation.

We then consider the functor $\fdec{\TT}{\cat{KHaus}}{\cat{CMSLat}}$
that maps a compact Hausdorff space to its lattice of closed sets ordered by reverse inclusion (which is isomorphic to the lattice of open sets ordered by inclusion)
and a continuous function to its direct image map.\footnote{Dealing with closed sets makes the action on morphisms easier to state, as it is given by the map taking a set to its image, whereas for open sets one would have the map taking a set to the complement of the image of its complement.})
Let $\II$ denote the functor mapping a unital $C^*$-algebra to its lattice of closed, two-sided ideals
(equivalently, the lattice of open sets of the $C^*$-algebra's primitive ideal spectrum)
and a unital $*$-homomorphism to its preimage map.
We conjecture that $\tilde{\TT} \simeq \II$, and prove the von Neumann algebraic analogue:

\newif\iffirststatement
\firststatementtrue
\begin{restatable*}{theorem}{thmVNideals}
\label{thm:VNideals}
Let $\fdec{\TT_\mathsf{W}}{\cat{HStonean}}{\cat{CMSLat}}$ be the functor assigning to a hyperstonean space 
its lattice of clopen sets ordered by reverse inclusion and to an open continuous function its direct image map,
and let $\fdec{\II_\mathsf{W}}{\cat{vNA}^\op}{\cat{CMSLat}}$ be the functor assigning to a von Neumann algebra its lattice of ultraweakly closed, two-sided ideals and to an ultraweakly continuous (or normal) unital $*$-homomorphism its inverse image map.
Then $\tilde{\TT}_\mathsf{W} \simeq \II_\mathsf{W}$, where $\tilde{\TT}_\mathsf{W}$ is the von Neumann algebraic extension of $\TT$.
\end{restatable*}
\firststatementfalse

Here, the extension $\tilde{\TT}_\mathsf{W}$ is obtained, for each von Neumann algebra $\ca$, via a diagram $G_{\mathsf{W}}(\ca)$ whose objects are the spectra of its commutative sub-von Neumann algebras and whose morphisms arise from inner automorphisms of $\ca$.

\subsection*{Outline}
The remainder of this article is organised as follows:
\begin{itemize}
\item 
Section~\ref{sec:background-motivation} surveys the main aspects of state-observable dualities, quantum contextuality, the spectral pre\-sheaf, and noncommutative geometry, and expands on the motivation for this work;
\item 
Section~\ref{sec:diagrams} introduces the necessary technical machinery for functorially associating diagrams of topological spaces to operator algebras;
\item 
Section~\ref{sec:extensions} defines the notion of an \emph{extension} of a concept defined for compact Hausdorff topological spaces to one defined for all unital $C^*$-algebras;  
\item 
Section~\ref{sec:Ktheory} considers the extension of topological $K$-theory and gives a novel geometric formulation of operator $K$-theory;
\item
Section~\ref{sec:ideals} explains the conjectured correspondence between extended open sets and closed, two-sided ideals, and proves the von Neumann algebraic analogue;
\item 
Section~\ref{sec:conclusions} outlines future lines of research.
\end{itemize}

The appendices contain additional and background material:
\begin{itemize}
\item
Appendix~\ref{app:colimits} presents an alternative explicit construction of the colimit functor of Section~\ref{ssec:generalisedcolimit};
\item
Appendix~\ref{app:ktheory} contains background material on topological and operator $K$-theory, expanding on the presentation in Sections~\ref{ssec:topKtheory-technicalbackground} and \ref{ssec:opKtheory-technicalbackground};
\item
Appendix~\ref{app:ideals} contains background material on the primitive ideal spectrum of a $C^*$-algebra and some facts about von Neumann algebras needed in Section~\ref{sec:ideals}.
\end{itemize}

The present article is based on the doctoral dissertation of the first author \cite{deSilva:DPhil}.  Earlier versions of the main results have appeared in the unpublished manuscripts \cite{deSilva:Ktheory} (Sections~\ref{sec:diagrams}--\ref{sec:Ktheory} and Conjecture \ref{conj:idealsCstar}) and \cite{deSilvaBarbosa:PartialAndTotalIdealsVNAlgebra} (Section~\ref{sec:ideals}).

\subsection*{Notation}

For simplicity, given a functor $\fdec{F}{\catA}{\catB}$, we do not distinguish it notationally from the same map regarded as a functor $\catA^\op \longrightarrow \catB^\op$.
The same applies to $\fdec{G}{\catA^\op}{\catB}$ and $\fdec{G}{\catA}{\catB^\op}$,
since we treat $(\catA^\op)^\op$ as equal to $\catA$.

We shall also denote by $\lift{F}$ the lifting of a functor $\fdec{F}{\catA}{\catB}$ to the categories of diagrams introduced in Section~\ref{ssec:catsdiags}, mapping $\catA$-valued to $\catB$-valued diagrams (see the remarks at the end of that section for details).

Given functors $\fdec{F, G}{\catA}{\catB}$, we write $F \simeq G$ to denote that $F$ and $G$ are naturally isomorphic.

For reference, Table~\ref{tab:catnames} lists the categories mentioned throughout this article,
and their duals when applicable (see Section~\ref{ssec:background-classicaldualities}).

\begin{table}
\caption{Glossary of categories and their duals}
\label{tab:catnames} 
\centering
\begin{tabular}{llll}
\hline\noalign{\smallskip}
    \textbf{Notation}                   & \textbf{Objects}      & \textbf{Morphisms}   & \textbf{Dual}           \\
\noalign{\smallskip}\hline\noalign{\smallskip}
$\cat{C^*}$ & \cstar s & $*$-homomorphisms & \\
$\cat{uC^*}$ & unital \cstar s & unital $*$-homomorphisms & \\
$\cat{ucC^*}$ & unital commutative \cstar s & unital $*$-homomorphisms & $\cat{KHaus}$ \\
$\cat{vNA}$ & von Neumann algebras & ultraweakly continuous (or normal) unital $*$-homomorphisms & \\
$\cat{cvNA}$ & commutative von Neumann algebras & ultraweakly continuous (or normal) unital $*$-homomorphisms & $\cat{HStonean}$\\
$\cat{BA}$ & Boolean algebras & Boolean algebra homomorphism & $\cat{Stone}$\\
$\cat{cBA}$ & complete Boolean algebras & complete Boolean algebra homomorphisms & $\cat{Stonean}$\\
$\cat{caBA}$ & complete atomic Boolean algebras & complete Boolean algebra homomorphisms & $\cat{Set}$\\
$\cat{Set}$ & sets & functions & $\cat{caBA}$ \\
$\cat{Top}$ & topological spaces & continuous functions & \\
$\cat{KHaus}$ & compact Hausdorff spaces & continuous functions & $\cat{ucC^*}$ \\
$\cat{Stone}$ & Stone spaces & continuous functions & $\cat{BA}$ \\
$\cat{Stonean}$ & Stonean spaces & open continuous functions & $\cat{cBA}$ \\
$\cat{HStonean}$ & hyperstonean spaces & open continuous functions & $\cat{cvNA}$ \\
$\cat{Ab}$ & abelian groups & group homomorphisms & \\
$\cat{AbMon}$ & abelian monoids & monoid homomorphisms & \\
$\cat{Rng}$ & rings & ring homomorphisms & \\
$\cat{CMSLat}$ & complete lattices & complete meet-semilattice homomorphisms (meet-preserving functions) & \\
$\cat{Cat}$ & small categories & functors & \\
\noalign{\smallskip}\hline
\end{tabular}
\end{table}

\section{Background and motivation}\label{sec:background-motivation}

We survey the main background topics to make the results accessible to both mathematicians and physicists and to expand on the motivation for our work.

\subsection{(Classical) state-observable dualities}\label{ssec:background-classicaldualities}

Observables, being representatives of quantities that vary with state, are generally endowed with algebraic structure capturing the arithmetic of quantities.
States, on the other hand, are endowed with geometric structure: states are close to each other when they share similar physical properties.

Important examples are those classical systems that can be modelled in terms of Poisson geometry \cite{marsden}.  The collection of pure states is in fact a geometric space: a Poisson manifold.  This justifies the use of the terminology \emph{state space}.  Any smooth real-valued map on this manifold can be taken to represent an observable quantity and, taken together, these maps form a commutative algebra with pointwise operations.  In this case, the Poisson bracket provides the additional structure of a Lie algebra.  Hence, we refer to the \emph{algebra of observables}.

In the above example, predictions for the outcomes of experiments are deterministic and observables are explicitly represented as quantity-valued functions on the state space.  However, the fact that a pairing of a state with an observable results in a quantity means that fixing a state yields a quantity-valued function on the collection of observables.  Identifying a state with the function on observables it defines allows realising the state space as a space of functions from the algebra of observables to an algebra of quantities. 

This perspective is common in duality theory.  The simplest example is the Stone-type duality between the categories $\cat{Set}$ of sets and functions and $\cat{caBA}$ of complete, atomic Boolean algebras and complete Boolean algebra homomorphisms \cite{Tarski1935:ZurGrundlegungBA}.
In one direction, it maps functorially a set $S$ to the Boolean algebra $\Hom_\cat{Set}(S, 2)$ of functions to $2 = \{0,1\}$, and a function $\fdec{f}{S}{T}$ to a $\cat{caBA}$-morphism $\fdec{f^*}{\Hom_\cat{Set}(T, 2)}{\Hom_\cat{Set}(S, 2)}$ given by $f^*(g) = g \circ f$.
Similarly, in the opposite direction, one can use the functor $\Hom_\cat{caBA}(-,2)$, where $2$ is the two-element Boolean algebra, to complete the duality of these categories.  This establishes a (dual) equivalence between a category of geometric objects---sets can be seen as trivial geometries with no structure beyond cardinality---and a category of algebraic objects.

A duality of the same form---defined by $\Hom$ functors to a dualising object $2$---exists between the categories $\cat{Stone}$ of Stone spaces and continuous functions and $\cat{BA}$ of Boolean algebras and Boolean algebra homomorphisms \cite{Stone1936,Stone1937-RepThm,Sikorski-BooleanAlgebras}; see \cite{GivantHalmos-IntroBoolean} as an elementary reference and \cite{Johnstone:StoneSpaces} for more general forms of this duality.
The geometric nature of Stone spaces, which are particular kinds of topological spaces, is clearer in this instance.  This example also demonstrates a logical form of duality between semantics and syntax: the algebraic category of Boolean algebras can be seen as the category of propositional theories whereas the geometric category of Stone spaces is the category of corresponding spaces of two-valued models \cite{Tarski1935:ZurGrundlegungBA,AwodeyForssell2013:FirstOrderLogicalDuality}.

A classic example of geometric-algebraic duality, which informs Section~\ref{sec:ideals}, is that between unital commutative rings and affine schemes \cite{hartshorne}.  Given such a ring $R$, one can define a topological space $\Spec R$ called the {prime spectrum} (or just {spectrum})  whose points are the prime ideals of $R$ and whose open sets are indexed by ideals of $R$.  One can then define a sheaf of commutative rings on $\Spec R$ such that the stalk at a prime ideal $p$ is the localisation of $R$ at $p$, turning $\Spec R$ into a locally ringed space.  The locally ringed spaces that arise in this way are called {affine schemes}.  The commutative ring giving rise to an affine scheme can be recovered by taking the ring of global sections of the scheme.  In this way, a geometric dual to the category of unital commutative rings is constructed and geometric tools and reasoning can be brought to bear in subjects which make use of commutative rings, such as number theory.
Many other examples of geometric-algebraic dualities can be found; see \cite{khalkhali}.

The most important example for our purposes is the \Gelfand--\Naimark\ duality between the category $\cat{KHaus}$ of compact Hausdorff spaces and continuous functions and the category $\cat{ucC^*}$ of unital commutative $C^*$-algebras and unital $*$-homomorphisms \cite{GelfandNaimark}.
Under this duality, a space $X$ is mapped to the unital commutative $C^*$-algebra $C(X)$ of all the continuous complex-valued functions on $X$.
The reversal of this process---going from a commutative algebra $\ca$ to the topological space whose algebra of functions is $\ca$---is accomplished by the \Gelfand\ spectrum functor $\Sigma$.
The points of the space $\Sigma(\ca)$ are the characters of $\ca$, i.e. unital homomorphisms from $\ca$ to $\C$,
with topology given by pointwise convergence (the weak-$*$ topology).
So, similarly to the Stone dualities discussed above, \Gelfand--\Naimark\ duality arises from $\Hom$ functors to a dualising object: in this case, $\C$.\footnote{There is also a real version of this duality, with $\R$ as the dualising object \cite{Johnstone:StoneSpaces}.} $\Hom_{\cat{ucC^*}}(-,\C)$ is topologised by pointwise convergence, using the topology of $\C$;  $\Hom_{\cat{Top}}(-,\C)$ inherits (pointwise) the algebraic structure from $\C$ and is given the uniform norm.\footnote{Note that $\C$ is not in fact a compact Hausdorff space, and thus does not live in $\cat{KHaus}$. However, this duality can be extended to one between locally compact Hausdorff spaces and (not- necessarily-unital) $C^*$-algebras.}

\Gelfand--\Naimark\ duality has a clear interpretation as a state-observable duality.  The objects of the geometric category can be seen as state spaces
of classical systems.
 Observables, in this analogy, are the continuous real-valued functions on the state space, i.e. the self-adjoint elements of the algebra of observables.  The \Gelfand\ spectrum functor recovers the pure state space from the algebra of observables.  We attribute a classical nature to these models since states are associated with well-defined values for all observables simultaneously.

Von Neumann algebras constitute an important special class of \cstar s.
The topological spaces that arise as \Gelfand\ spectra of commutative von Neumann algebras are hyperstonean spaces \cite{Dixmier1951:SurCertainsEspaces,Groethendieck1953:ApplicationsLineairesFaiblementCompactesCK}.
These are extremally disconected compact Hausdorff (or Stonean) spaces
with sufficiently many positive normal measures; see \cite[Definition III.14]{Takesaki1979:TheoryOfOperatorAlgebrasI} for more details.
The appropriate notion of morphism when dealing with von Neumann algebras is that of ultraweakly continuous, or normal, unital $*$-homomorphisms. Corresponding to such morphisms between commutative von Neumann algebras are open continuous maps between their spectra. Thus, \Gelfand--\Naimark\ duality restricts to a duality between the categories $\cat{cvNA}$ of commutative von Neumann algebras and ultraweakly continuous, or normal, unital $*$-homomorphisms and $\cat{HStonean}$ of hyperstonean spaces and morphisms of Stonean spaces, i.e. open continuous functions. See e.g. \cite[Chapter III.1]{Takesaki1979:TheoryOfOperatorAlgebrasI} for the objects part of this duality and \cite[Lecture 14]{Lurie2011:261y} for the morphisms.

In all these instances, our algebraic categories consist of objects with commutative operations.  In quantum theory, the model of a system is specified by a noncommutative algebra of observables.  Understanding the geometric duals of these objects is essential to completing our understanding of how quantum mechanics revises the nature of classical  theories and, in particular, notions of states of systems.  It is also a fundamental question of purely mathematical interest.

\subsection{Contextuality: the Bell--Kochen--Specker theorem}
This theorem establishes that quantum mechanics is contextual
in the sense that it does not admit a hidden variable model
where (hidden) ontic states ascribe consistent values to all observables simultaneously,
independent of the method of observation, i.e. of which other observables are measured together with some observable.
In fact, it shows that it is not possible to construct even a single such consistent deterministic ontic state (valuation).

Suppose we have a quantum system whose algebra of observables is $\cb(\ch)$ where $\dim\ch > 2$.
Measurements are given by the self-adjoint operators in the algebra of observables.

\begin{definition}
A valuation on a von Neumann algebra $\ca$
is a map $v$ from the self-adjoint operators of $\ca$ to $\R$ such that
$v(1) = 1$ and
for any pair of commuting observables $A$ and $B$, $v(A+B)=v(A)+v(B)$ and $v(AB)=v(A)v(B)$.
\end{definition}

These conditions are necessary for such a potential hidden state to be consistent with
a quantum state in the sense that it does not predict the occurrence of any impossible events.
Note that when $A$ and $B$ are two commuting observables, then $A+B$ and $AB$ also commute with both $A$ and $B$,
and therefore can be measured together.
Upon performing these measurements on any quantum state,
the obtained joint outcomes satisfy the functional relations above.\footnote{In some presentations (such as \cite{Redhead1987} or, for general von Neumann algebras, \cite[Lemma 6]{doring2005kochen}),
the sum and product rules in the definition of valuations are derived from a different assumption, the functional composition principle (or FUNC principle).
This states the requirement that
$v(f(A)) = f(v(A))$ for a class of functions $\fdec{f}{\R}{\R}$, which in the case of valuations on general von Neumann algebras, is taken to be that of Borel functions.}

\begin{theorem}[Bell--Kochen--Specker \cite{bell1966,KochenSpecker}]
No valuations exist on $\cb(\ch)$ if $\dim\ch > 2$.
\end{theorem}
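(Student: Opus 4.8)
The plan is to reduce the statement to a purely combinatorial non-colourability result about rays in a three-dimensional Hilbert space, which is the heart of the Kochen--Specker argument. Suppose, for contradiction, that a valuation $v$ exists on $\cb(\ch)$ with $\dim\ch > 2$. The first observation is that $v$ can only take the values $0$ and $1$ on projections: if $P = P^* = P^2$ then $P$ commutes with itself, so the product rule gives $v(P) = v(P^2) = v(P)^2$, whence $v(P) \in \{0,1\}$. Moreover, if $P_1, \dots, P_k$ are pairwise orthogonal projections then they pairwise commute and each partial sum commutes with the next summand, so iterating the additivity rule yields $v(P_1 + \dots + P_k) = \sum_i v(P_i)$; together with monotonicity (if $P \leq Q$ then $Q - P$ is again a projection of non-negative value, so $v(P) \leq v(Q)$) this shows that the restriction of $v$ to projections behaves like a finitely additive $\{0,1\}$-valued measure.

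Next I would extract the frame condition. Fixing a three-dimensional subspace $\ch_0 \subseteq \ch$ with projection $R_0$, any complete orthonormal triple $\{P_1, P_2, P_3\}$ of rank-one projections in $\ch_0$ sums to $R_0$, so $v(P_1) + v(P_2) + v(P_3) = v(R_0)$; and two orthogonal rank-one projections cannot both be valued $1$, since their sum is a projection whose value lies in $\{0,1\}$. The key point is to arrange $v(R_0) = 1$ for some such $\ch_0$: in finite dimensions this is immediate, for summing $v$ over an orthonormal basis of $\ch$ gives $v(1) = 1$, so exactly one basis vector $e$ carries value $1$, and any $\ch_0 \ni e$ then satisfies $v(R_0) \geq v(P_e) = 1$. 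When $v(R_0) = 1$, the induced colouring $c(\psi) := v(P_\psi) \in \{0,1\}$ of the rays of $\ch_0$ assigns exactly one $1$ to each orthonormal triple.

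The crux is then the Kochen--Specker combinatorial obstruction: there exists a \emph{finite} set $\mathcal{V}$ of rays in $\C^3$ (the original argument uses $117$ directions; later refinements use far fewer) such that no map $c \colon \mathcal{V} \to \{0,1\}$ can assign exactly one $1$ to each orthonormal triple contained in $\mathcal{V}$ while never assigning $1$ to two orthogonal rays. Verifying that a concrete such configuration is non-colourable is a finite but delicate case analysis, and I expect this to be the main obstacle: the entire content of the theorem is concentrated in exhibiting and checking this configuration, since everything preceding it is soft. Applied to the rays of $\ch_0$, it contradicts the existence of $c$, and hence of $v$.

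Finally, I would handle the reduction to dimension three in general, including infinite-dimensional $\ch$, where the finite-basis summation is unavailable. The cleanest route, sidestepping both the dimension bookkeeping and the explicit configuration, is to invoke Gleason's theorem: every frame function on a Hilbert space of dimension at least three is of the form $P \mapsto \mathrm{Tr}(\rho P)$ for a density operator $\rho$, hence continuous in $\psi$ on the connected unit sphere. A $\{0,1\}$-valued frame function would then be continuous and two-valued, so constant; but a constant $c \in \{0,1\}$ cannot sum to $1$ over an orthonormal basis with more than two elements (and diverges or vanishes over an infinite basis). I would present the Kochen--Specker configuration as the elementary, self-contained proof and Gleason's theorem as the alternative that treats arbitrary dimension uniformly.
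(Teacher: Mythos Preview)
Your proposal is correct and follows the same approach that the paper sketches: reduce the valuation to a $\{0,1\}$-valued finitely additive function on projections, then appeal to the Kochen--Specker configuration of $117$ vectors in dimension three. The paper does not give a self-contained proof of this background result---it simply records that restriction to projections yields a two-valued additive map and cites the original $117$-vector obstruction---so your write-up is considerably more detailed than what appears there, and your added Gleason alternative is not mentioned in the paper but is a legitimate second route.

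One small caution on the infinite-dimensional reduction: your invocation of Gleason tacitly assumes that the restriction of $v$ to rank-one projections is a frame function in Gleason's sense, i.e.\ that $\sum_i v(P_{e_i})$ is the same constant for every orthonormal basis. From finite additivity alone you only know that at most one term in such a sum is $1$; you have not yet excluded the possibility that the sum is $0$ for some bases and $1$ for others (equivalently, that every finite-rank projection has value $0$). This is precisely the gap you flagged when noting that the finite-basis summation is unavailable, and Gleason as usually stated does not close it by itself---one needs either a direct argument that some rank-one projection carries value $1$, or the stronger countably-additive form. The paper sidesteps this entirely by treating the theorem as a cited result; if you want a fully self-contained argument in infinite dimensions, this step needs a sentence or two more.
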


Observe that, restricted to projections, a valuation is a map that takes the values $0$ or $1$ and is additive on sets of orthogonal projections.
Kochen and Specker proved that such a valuation on projections is impossible to construct by providing a collection $W$ of 117 vectors in a Hilbert space of dimension $3$ such that no subset of $W$ intersects each orthogonal triple in $W$ precisely once.

The result is extended to all separable von Neumann algebras without summands of type $I_1$ or $I_2$
in \cite{doring2005kochen}, showing that no valuations exist
for quantum systems described by algebras of observables of these kinds.

The study of contextuality has enjoyed a revival in recent times.
A number of abstract formalisms to study contextuality
in general physical theories have been developed recently
\cite{AbramskyBrandenburger,AcinEtAl:CombinatorialApproachNonlocalityContextuality,CSW,spek}.
Also, recent work suggests that it might be considered a resource conferring advantage in
computational and information-processing tasks \cite{Raussendorf:contextuality,HowardEtAl:magic}.

\subsection{The spectral presheaf}
The research programme known as the topos approach to quantum theory
aims to achieve a reformulation of quantum theory that resembles the classical picture as closely as possible,
but taking contextuality as a central feature.
The central idea is to study a quantum system via its \emph{contexts} or \emph{classical perspectives}.

Formally, a context may be taken to correspond to a commutative subalgebra of the algebra of observables.  Physically, this represents a set of properties that can be simultaneously measured with one experimental procedure.
The idea to consider quantum systems via classical contexts has a long history, in a sense going back to Bohr \cite{Bohr:DiscussionWithEinsteinOnEpistemologicalProblemsInAtomicPhysics}, and appearing explicitly in Edwards \cite{Edwards1979:TheMathematicalFoundationsOfQM}.

Regarding contextuality as a central aspect of quantum mechanics,
Isham \& Butterfield \cite{oldtopos1,oldtopos2,oldtopos3,oldtopos4}
proposed the use of presheaves to assign data to these contexts and glue it together in a consistent way, 
in order to achieve a full description of a quantum system via the pasting of all its partial classical perspectives.
This idea was further developed by D\"oring \& Isham \cite{newtopos1,newtopos2,newtopos3,newtopos4}
and, along somewhat different lines, by Heunen et al. \cite{Dutch,toposdutch-bohrlogic,toposdutch-deepbeauty}.
The topos approach to quantum mechanics suggests a candidate geometric object
to take the role of the state space in analogy to the classical case: the spectral presheaf.
This object collects the classical partial state spaces of commutative subalgebras along with morphisms used to consistently relate data from different classical perspectives.

\begin{definition}A \emph{context} of a von Neumann algebra $\ca$ is a commutative sub-von Neumann algebra of $\ca$.  The \emph{context category}  $\cat{C}(\ca)$ is the subcategory of  commutative von Neumann algebras whose objects are the contexts of $\ca$ and whose morphisms are the inclusion maps between them. 
\end{definition}

For every context $V$, the \Gelfand\ spectrum functor can be used to construct a sample space $\Sigma(V)$ whose points represent the possible outcomes for a measurement procedure jointly measuring all the observables in $V$.  The elements $o$ of $\Sigma(V)$ are functions assigning real numbers to the observables in $V$ while preserving addition and multiplication.  These conditions are easily justified on physical grounds and are sufficient to guarantee that $o$ assigns to a self-adjoint operator a value on its spectrum.
As explained in Section~\ref{ssec:background-classicaldualities}, this collection of functions comes equipped with an extremally disconnected compact Hausdorff topology (in fact, a hyperstonean topology) coming from pointwise convergence, which is discrete in the case that $\ca$ is finite-dimensional.

\begin{definition}[Spectral presheaf]\label{def:spectralpresheaf}
Let $\ca$ be a von Neumann algebra.
   Its \emph{spectral presheaf} is the functor of type  $\cat{C}(\ca)^\op \longrightarrow \cat{HStonean}$ that maps each object and morphism of $\cat{C}(\ca)$ to its image under the \Gelfand\ spectrum functor.
\end{definition} 

An inclusion map $\iota:\ V \hookrightarrow V'$ corresponds to a coarse-graining,
i.e. the context $V$ represents a procedure measuring a subset of the observables measured by the procedure represented by $V'$.
The image under the \Gelfand\ spectrum functor of such an inclusion, $\fdec{\Sigma(\iota)}{\Sigma(V')}{\Sigma(V)}$, acts by restriction:
an outcome map $o \in \Sigma(V')$ is taken to $o|_{V}$.  

Accordingly, a global section of the spectral presheaf of $\ca$ is a choice of $o_{V} \in \Sigma(V)$ for all contexts $V$ of $\ca$ such that $o_{V}= o_{V'}|_{V}$ whenever $V \subset V'$.
Therefore, the Bell--Kochen--Specker theorem can be reformulated in terms of the spectral presheaf:
\begin{theorem}[\cite{oldtopos3,doring2005kochen}]
Suppose $\ca$ is a separable von Neumann algebra without type $I_1$ or $I_2$ summands
Then its spectral presheaf has no global sections.
\end{theorem}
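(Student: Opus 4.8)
The plan is to establish a bijection between global sections of the spectral presheaf of $\ca$ and valuations on $\ca$ (in the sense defined above), and then to invoke the (extended) Bell--Kochen--Specker theorem, which guarantees that no valuations exist under the stated hypotheses. The nonexistence of valuations transfers immediately to the nonexistence of global sections.

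First I would unwind the definitions. A global section is a compatible family $(o_V)_V$ with $o_V \in \Sigma(V)$ for every context $V$ and $o_V = o_{V'}|_V$ whenever $V \subseteq V'$. Each $o_V$ is a character of the commutative von Neumann algebra $V$, hence a unital multiplicative $\C$-linear functional; restricted to self-adjoint elements it takes real values lying in the relevant spectrum. The central observation is that the compatibility condition makes the assignment $A \mapsto o_V(A)$ independent of the chosen context $V \ni A$: for any self-adjoint $A$, the von Neumann algebra $\V{A}$ generated by $A$ and $1$ is itself a context contained in every context containing $A$, so compatibility forces $o_V(A) = o_{\V{A}}(A)$ for all such $V$. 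This lets me define $v(A) := o_{\V{A}}(A)$ unambiguously on the self-adjoint part of $\ca$.

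Next I would verify that $v$ is a valuation. Unitality, $v(1) = 1$, is immediate since each $o_V$ is a unital character. For the sum and product rules, suppose $A$ and $B$ are commuting self-adjoint operators; then they generate a single commutative context $V_{A,B}$ containing both $\V{A}$ and $\V{B}$, and well-definedness together with the additivity and multiplicativity of the character $o_{V_{A,B}}$ yield $v(A+B) = v(A) + v(B)$ and $v(AB) = v(A) v(B)$. Conversely, from a valuation $v$ I would recover a global section by setting $o_V$ to be the character of $V$ obtained by extending $v|_V$ complex-linearly (legitimate because all elements of the commutative algebra $V$ mutually commute, so the sum and product rules apply to every pair); the family $(o_V)_V$ is automatically compatible, as each $o_V$ is a restriction of the single global map $v$. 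These two assignments are mutually inverse, giving the desired bijection.

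With the correspondence in hand, the conclusion is immediate: by the Bell--Kochen--Specker theorem together with its generalisation to separable von Neumann algebras without type $I_1$ or $I_2$ summands \cite{doring2005kochen}, no valuation on such an algebra $\ca$ exists, and hence its spectral presheaf admits no global sections. I expect the main point requiring care to be the \emph{well-definedness} of $v$---that is, recognising that the presheaf's restriction maps are precisely character restriction, so that a compatible family assigns a single consistent value to each observable regardless of the context in which it is viewed---rather than any deep analytic input, since the substantive mathematical content (the nonexistence of valuations) is supplied by the cited no-go theorems.
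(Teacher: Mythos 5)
Your proposal is correct and takes essentially the same approach as the paper: the paper's entire justification is the remark that valuations on $\ca$ correspond to global sections of its spectral presheaf, combined with the nonexistence of valuations from the generalised Bell--Kochen--Specker theorem of \cite{doring2005kochen}. You have simply made explicit the details of that correspondence (well-definedness via the context generated by $A$ and $1$, and the converse extension of a valuation to a compatible family of characters) which the paper leaves implicit.
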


This is simply a reformulation of the Bell--Kochen--Specker
theorem for these von Neumann algebras \cite{doring2005kochen},
since valuations on a von Neumann algebra $\ca$ correspond to global sections of its spectral presheaf.

Thus, the impossibility of providing a mathematical model in the classical sense for quantum theory
is expressed by constructing a geometric object associated to a quantum system via collating the sample spaces associated to contexts, linked by a simple consistency condition related to coarse-graining, and demonstrating that said object possesses no `global points'.

These geometries represented by spectral presheaves do, however, possess global probability distributions.
Remarkably, these distributions are in correspondence with (possibly mixed) quantum states.
Just as the lack of points of spectral presheaves is equivalent to a landmark theorem of quantum foundations, the Bell--Kochen--Specker theorem,
the correspondence between distributions on spectral presheaves and quantum states is equivalent to Gleason's theorem \cite[see also \cite{Maeda1989:ProbabilityMeasuresOnProjectionsInVonNeumannAlgebras,hamhalter2003quantum}]{gleason1957measures,Christensen1982:MeasuresOnProjectionsAndPhysicalStates,Yeadon1983:MeasuresOnProjectionsInW*algebrasOfTypeII1,Yeadon1984:FinitelyAdditiveMeasuresOnProjectionsInFiniteW*Algebras}.
This observation was first made by de Groote \cite{degroote},
and is succinctly expressed using the framework described in this article (see Section~\ref{ssec:thms-quantum-foundations}).

\subsection{The noncommutative geometry of $C^*$-algebras}

Noncommutative geometry is the mathematical study of noncommutative algebras by the extension of geometric tools that have been rephrased in the language of commutative algebra to the noncommutative setting  \cite{khalkhali}.
Given a duality between geometric objects and commutative algebras, such as \Gelfand--\Naimark\ duality, we can rephrase geometric concepts by expressing them algebraically in terms of functions.
For example, if we wish to algebraically express the idea of an open set of a topological space $X$, we might think about the set of functions that vanish outside of it and note that this is an ideal of $C(X)$.
In fact, there is a bijective correspondence between closed ideals of $C(X)$ and open sets of $X$.
As a more complicated example, the Serre--Swan theorem \cite{swan} allows us to identify vector bundles over $X$ with finitely generated projective $C(X)$-modules.
Remarkably, these algebraic descriptions of geometric concepts do not crucially rely on the commutativity of $C(X)$.
Therefore, one can generalise geometric tools and intuition to noncommutative algebras $\ca$ by using these same algebraic descriptions.
This justifies thinking of a noncommutative \cstar\ as a \textit{noncommutative (locally compact Hausdorff) topological space}.  The elements of the \cstar\ $\ca$ are thought of as continuous complex-valued functions on a metaphorical noncommutative space.  Such a space defies explicit description by conventional mathematical ideas about what a space is; for example, it cannot be thought of as a collection of points for such an object always has a commutative algebra of functions.

One of the best examples of an extension of a topological tool to the setting of noncommutative spaces is that of $K$-theory.  The isomorphism classes of vector bundles over a space $X$ form a semigroup under direct sum and the Grothendieck group of this semigroup is $K(X)$.  The $K$-functor is an important cohomological invariant in the study of topology.  By using the geometry-to-algebra dictionary described above, one defines an extension of $K$ to \cstar s $\ca$ in terms of equivalence classes of finitely generated projective $\ca$-modules; in this way, the operator $K_0$-functor is constructed.  It is an extension of $K$ in the sense that when $\ca$ is commutative, i.e. $\ca \simeq C(X)$ for a space $X$, then $K_0(\ca) \simeq K(X)$.  In this way, we obtain a powerful invariant of \cstar s which forms the basis of a classification programme \cite{elliott1995classification}.  The modern account of operator $K_0$ uses an equivalent formulation in terms of equivalence classes of projections in matrix algebras over $\ca$ \cite{rordam}.

With considerable effort, this process of translation from geometry to algebra yields a conceptual dictionary covering a vast terrain within mathematics.  It is not just topological concepts that can be translated into the language of algebra; there are also noncommutative extensions of measure theory, differential geometry, etc. \cite{connes}; see Table~\ref{tab:dictionary}.

\begin{table}
\caption{Dictionary of concepts between Geometry and Algebra}
\label{tab:dictionary} 
\centering
\begin{tabular}{ll}
\hline\noalign{\smallskip}
    \textbf{Geometry}                   & \textbf{Algebra}                    \\
\noalign{\smallskip}\hline\noalign{\smallskip}
    continuous function from a space to $\C$ & element of the algebra (operator)             \\
    continuous function from a space to $\mathbb{R}$ & self-adjoint element of the algebra \\
                range of a function                                                                     & spectrum of an operator                                                 \\
    open set                            & closed, two-sided ideal               \\
    vector bundle                       & finitely generated projective module  \\
    Cartesian product                   & minimal tensor product              \\
    disjoint union                      & direct sum                          \\
    infinitesimal                       & compact operator                    \\         
    regular Borel probability measure   & state                               \\
    integral                            & trace                               \\
    1-point compactification            & unitalisation                       \\
                \ldots & \ldots \\
\noalign{\smallskip}\hline
\end{tabular}
\end{table}

 \subsection{Motivation}\label{ssec:motivation}

The unreasonable effectiveness of topological tools and intuition in the study of \cstar s suggests the existence of a deeper principle at work.  The method of translating geometric ideas into algebra in order to generalise them is powerful but can be somewhat ad hoc.
Ideally, one may hope for a new conception of space, of which the commutative/topological situation would be a special case, and which would serve as the (objects) of a category dual to that of noncommutative \cstar s.
That is, one would be able to extend the notion of  \Gelfand\ spectrum of a commutative algebra to the noncommutative case by assigning to an algebra $\ca$ such a `space', whose set of continuous functions would be, in some sense, $\ca$.
As pointed out above, an explicit description of (currently imaginary) noncommutative topological spaces is very difficult since such spaces defy most contemporary ideas about mathematical spaces.  It is difficult to know how to begin defining such an object.  However, we can imagine that equipped with such an explicit description, should it not depart too far from the commutative situation, one could find natural and intuitive methods of extending topological tools.


Thus, our criterion for a successful explicit manifestation of noncommutative space is that it naturally leads to extensions of topological concepts that agree with well-known and useful noncommutative geometric concepts.  In effect, we aim to complete the conceptual diagram of Figure~\ref{fig:conceptualdiagram}.
This diagram requires some explanation.  The top row describes the  two dually equivalent mathematical formalisms for encapsulating the operational content of a classical system: the topological picture, in which states are taken as the primitive concept, and the commutative $C^*$-algebraic picture, in which observables are taken as primitive.  

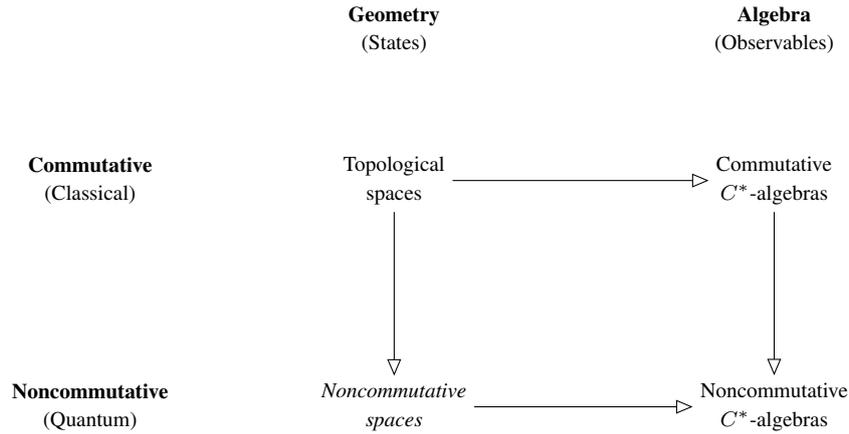
\begin{figure}
\centering
\begin{tikzpicture}[every text node part/.style={align=center}]
\node at (-2,2) {\textbf{Commutative} \\ (Classical)};

\node at (-2,-1) {\textbf{Noncommutative} \\ (Quantum)};

\node at (2,4) {\textbf{Geometry}\\ (States)};

\node at (7,4) {\textbf{Algebra} \\ (Observables)};

\node (v1) at (2,2) {Topological \\ spaces};

\node (v2) at (7,2) {Commutative \\ \cstar s};

\node (v4) at (2,-1) {\emph{Noncommutative} \\ \emph{spaces}};

\node (v3) at (7,-1) {Noncommutative \\ \cstar s};

\draw [-open triangle 45] (v1) edge node[midway,above] {} (v2);
\draw [-open triangle 45] (v2) edge node[midway,right] {} (v3);
\draw [-open triangle 45] (v1) edge node[midway,left] {} (v4);
\draw [-open triangle 45] (v4) edge node[midway, above] {} (v3);
\end{tikzpicture}
\caption{Here, we give a (nonrigorous) high-level diagram representing the heuristic processes by which topological concepts are generalised to the noncommutative setting.  The top and right arrows correspond to the usual method of translation:  the top arrow represents translating a topological notion to an algebraic one via \Gelfand--\Naimark\ duality and the right arrow represents applying this algebraic definition in the noncommutative setting.
Our aim is to generalise the \Gelfand\ spectrum functor $\Sigma$ to a functor $\textbf{G}$ that assigns to each noncommutative algebra a \emph{noncommutative space}.  This requires proposing a candidate construction of noncommutative space that is manifestly geometric.  Further, we ask that this notion of noncommutative space comes naturally equipped with processes corresponding to arrows completing the diagram---a left arrow corresponding to a way of generalising topological concepts to these noncommutatively spatial objects, and a bottom arrow corresponding to porting such (generalised) topological concepts to noncommutative algebraic ones via the new association $\textbf{G}$---in such a way that reproduces the results of the usual translation process.
}
\label{fig:conceptualdiagram}       
\end{figure}

The arrows represent methods for the translation and generalisation of concepts.  The \Gelfand\ spectrum functor allows for any notion or theorem phrased in terms of the topological structure of spaces to be translated into algebraic terms; e.g. open sets of a space becomes closed, two-sided ideals of an algebra.  Once a concept has been phrased in terms of algebra, it can be applied without modification to the noncommutative case; e.g. finitely generated projective modules of a commutative algebra (the equivalent of vector bundles) becomes finitely generated projective modules of a not-necessarily-commutative algebra.  Thus, the composition of the top and right arrows can be seen as the usual process of generating the basic entries of the noncommutative dictionary.  

Note, however, that there is some ambiguity in this translation process.
A topological concept can be translated in several different ways, which means that intuition and judgement must be deployed when determining appropriate algebraic analogues.  As a very simple example, open sets of a space $X$ are in correspondence with both the closed, left ideals of $C(X)$ and the closed, two-sided ideals of $C(X)$ as these two collections are identical in the commutative case.  Thus, finding a completely automatic method of translation that eliminates such ambiguities would in itself constitute an advance in the structural understanding of noncommutative geometry.

Akemann and Pedersen \cite{pedersen} proposed to replace the translation process by working directly with Giles--Kummer's \cite{giles} and Akemann's \cite{akemann} noncommutative generalisations of the basic topological notions of open and closed sets.  In contrast, we do not employ algebraic generalisations of basic topological notions. Instead, we work with objects that slightly generalise the notion of topological space and come readily equipped with an alternative to the translation process.

In addition to the work of Akemann--Pedersen and Giles--Kummer on noncommutative generalisations of \Gelfand--\Naimark\ duality, there have been a number of  alternative approaches by authors including Alfsen \cite{Alfsen}, Bichteler et al. \cite{Bichteler},  Dauns--Hofmann \cite{Dauns}, Fell \cite{Fell}, Heunen et al. \cite{heunen2011gelfand},  Kadison \cite{Kadison}, Kruml et al. \cite{Kruml}, Krusy\'nski--Woronowicz \cite{Kruszynski}, Mulvey \cite{mulvey}, Resende \cite{Resende}, Shultz \cite{Shultz}, and Takesaki \cite{Takesaki:duality}.  An excellent discussion of many of these works is contained in a paper by Fujimoto \cite{Fujimoto}.

Our goal with this work is to find, to a first approximation, a way of completing the conceptual diagram of Figure~\ref{fig:conceptualdiagram}. That is, we aim  to propose and study candidate definitions of a category $\cat{NCSpaces}$ of  \emph{noncommutative spaces} and a generalised \Gelfand\ spectrum functor $\fdec{\textbf{G}}{\cat{uC^*}^\op}{\cat{NCSpaces}}$ acting on the category $\cat{uC^*}$ of \emph{all} unital \cstar s and unital $*$-homomorphisms.
The first motivation is to provide a geometric manifestation for a notion of noncommutative space (namely, the quantum state space described above) whose existence is currently understood as being merely metaphorical.  The second is to exploit this geometric manifestation to obtain a canonical method for importing concepts of topology to noncommutative algebra.

The primary desiderata of a guess for a notion of noncommutative space is that it comes equipped with: i) a natural method of generalising notions from topology (that is, a left arrow in the informal diagram), and ii) a functorial association of such a generalised `space' $\textbf{G}(\ca)$ to each noncommutative algebra $\ca$, which provides a way of translating generalised topological concepts to noncommutative algebras by applying them to the corresponding noncommutative `space' (i.e. a bottom arrow).
That the composition of these two translations match the usual noncommutative dictionary would justify thinking of $\textbf{G}(\ca)$ as the geometric manifestation of a noncommutative algebra $\ca$.

Our proposal for a notion of noncommutative space and a functor $\textbf{G}$, as inspired by Isham and Butterfield's work, is to consider diagrams of topological spaces associated to contexts.
Our primary mathematical objectives are two-fold.
First, to argue that a necessary desideratum of a proposed geometric interpretation of a noncommutative algebra is a precise account of the relationship between topological concepts and their noncommutative analogues.
Indeed, this requirement will be critical for fixing the structure of our diagrams---specifically, the class of morphisms that are included.
Secondly, we aim to provide evidence that this is achievable.

We do not necessarily expect that this will immediately yield a full categorical duality, i.e. a concrete category equivalent to $\cat{uC^*}^\cat{op}$,  but rather stimulate progress towards that goal---or towards a better understanding of the obstacles to that goal. 
Finding such a full duality would require characterising the objects and morphisms of $\cat{NCSpaces}$ that are in the image of a functor $\textbf{G}$ and establishing that that $\textbf{G}$ is faithful and injective on objects---so that its image is a category and $\textbf{G}$ an equivalence onto it.
Note that the particular first approximations to $\textbf{G}$ that we consider in this article---which associate diagrams of topological spaces to a noncommutative algebra---are faithful but not full, and in particular not essentially injective.\footnote{This follows from the existence of a \cstar\ $\ca$ nonisomorphic to its opposite algebra \cite{connes1975factor}.  As both $\ca$ and $\ca^{op}$ have the same commutative subalgebras (contexts), both will be assigned identical diagrams of topological spaces.}
Instances of noncommutative concepts that lack a commutative analogue could provide guidance on which additional data, such as a group action, one might need to take into account when defining a $\textbf{G}$ to achieve a categorical duality.
The failure of a particular guess for $\textbf{G}$ to be essentially injective may also provide such guidance; however, one might also be open to the possibility that nonisomorphic algebras contain equivalent topological information and thus correspond to the same (or homeomorphic) noncommutative space.

Even without a complete categorical duality for \cstar s, the perspective outlined in this article may prove to be useful.  The extent to which noncommutative geometry can be understood directly in topological terms is a wide open---and, in our estimation, interesting---question.

The framework of extensions, developed in Section~\ref{sec:extensions}, formalises how certain ways of associating diagrams of topological spaces to noncommutative algebras come with left and bottom arrows, and in this way yield a noncommutative counterpart for every topological concept.
In Section~\ref{sec:Ktheory}, we determine the appropriate $\textbf{G}$ such that the associated extension of topological $K$-theory essentially matches the established noncommutative $K$-theory. 
In Section~\ref{sec:ideals}, as a verification of this candidate construction of $\textbf{G}$, we conjecture that it can be used to extend the notion of open set to that of closed, two-sided ideal, and prove the analogue of this in the setting of von Neumann algebras.

\section{Spatial diagrams}\label{sec:diagrams}

We introduce the technical machinery necessary for contravariantly functorially associating
diagrams of topological spaces, describing quotient spaces of a `noncommutative space',
to noncommutative \cstar s.
We consider functors that associate to a \cstar\ a diagram whose objects are spectra of contexts and whose morphisms are chosen to yield a natural method, described in the next section, of extending functors that act on compact Hausdorff spaces to functors acting on all unital \cstar s.
An analogous method is applicable to extending functors that act on hyperstonean spaces to functors acting on all von Neumann algebras.

\subsection{The categories of all diagrams in $\catC$}\label{ssec:catsdiags}

We propose to  associate to each unital \cstar\ $\ca$ a diagram of topological spaces whose objects are the spectra of the unital commutative sub-\cstar s of $\ca$.  Given that this association should generalise the \Gelfand\ spectrum functor, we would naturally expect it to be contravariantly functorial.

Typically, one thinks of a diagram $\fdec{D}{\catA}{\catC}$ in a category $\catC$ as living inside the functor category $\catC^\catA$.
This is inadequate for our purposes as different  algebras have different sets of commutative sub-\cstar s and will thus be mapped to diagrams of different shapes.
We introduce a general construction that allows considering diagrams of different shapes on the same footing. 

\begin{definition}For any category $\catC$, $\codiag(\catC)$, the covariant category of all diagrams in $\catC$ has as objects all the functors from any small category to $\catC$;
and the morphisms from a diagram $\fdec{D}{\catA}{\catC}$ to a diagram $\fdec{E}{\catB}{\catC}$
are pairs $(f, \eta)$ where $\fdec{f}{\catA}{\catB}$ is a functor and $\eta$ is a natural transformation from $D$ to $E \circ f$. 

The composition $(g,\mu) \circ (f, \eta)$ of two $\codiag(\catC)$-morphisms
\[\fdec{(f,\eta)}{D_1}{D_2} \Mand \fdec{(g,\mu)}{D_2}{D_3}\]
is given by
$(g \circ f, (\mu_f) \circ \eta)$
where $\mu_f$ is the natural transformation from $D_2 \circ f$ to $D_3 \circ g \circ f$ given by $(\mu_f)_a = \mu_{f(a)}$.

The contravariant category of all diagrams in $\catC$, $\diag(\catC)$, is the category $\codiag(\catC^\op)^\op$.
Its objects are  all contravariant functors from a small category to $\catC$; and the morphisms from a diagram $\fdec{D}{\catA^\op}{\catC}$ to a diagram $\fdec{E}{\catB^\op}{\catC}$ are pairs $(f, \eta)$ where $\fdec{f}{\catB}{\catA}$ is a functor and $\eta$ is a natural transformation from $D \circ f$ to $E$.
\end{definition}

The categories defined above can be constructed by considering the colax-slice and lax-slice 2-categories $\cat{Cat} ~/~ \catC$ \cite{slicenlab} and forgetting the 2-categorical structure.

Note that a functor $\fdec{F}{\catC}{\cat{C'}}$
naturally induces a functor from $\codiag(\catC)$ to $\codiag(\cat{C'})$, which we will also denote by $\lift{F}$.
Explicitly, if $\fdec{D}{\catA}{\catC}$, then $\lift{F}(D)$ is simply $F \circ D$,
while a $\codiag(\catC)$-morphism $(f, \eta)$ is sent to
the $\codiag(\cat{C'})$-morphism $(f, F \eta)$ where $(F \eta)_a = F(\eta_a)$.
In a similar fashion, the functor $F$ also induces a functor $\fdec{F}{\diag(\catC)}{\diag(\cat{C'})}$.
Note that, for contravariant functors, we get the following: a functor $\fdec{G}{\catC^\op}{\cat{C'}}$
induces functors $\fdec{G}{\diag(\catC)^\op}{\codiag(\cat{C'})}$ and $\fdec{G}{\codiag(\catC)^\op}{\diag(\cat{C'})}$,
since $\codiag(\catC^\op)=\diag(\catC)^\op$ and $\diag(\catC^\op)=\codiag(\catC)^\op$.

\subsection{Semispectral functors}

Having defined a category that simultaneously accommodates diagrams of varying shape, we are ready to begin defining our contravariantly functorial associations of diagrams of topological spaces to unital \cstar s.
We will define a class of such  functorial associations.  What all these contravariant functors from the category of unital \cstar s to diagrams of compact Hausdorff spaces have in common is that they will associate to each unital \cstar\ a diagram (i.e. a functor) whose domain is a subcategory of the category of unital commutative \cstar s.  In fact, in each case, the objects of the domain subcategory of the diagram associated to a \cstar\ are its unital commutative sub-\cstar s.  The class of morphisms in this domain subcategory, however, will be allowed to vary.
There is an analogous version for von Neumann algebras where one considers only their commutative sub-von Neumann algebras---this will also be of interest to us.

Our motivating example is the spectral presheaf (see Definition~\ref{def:spectralpresheaf}).
The recipe for its construction, which we aim to generalise, is as follows:\begin{enumerate}
\item  take a  von Neumann algebra $\ca$;
\item consider the subcategory $\cat{C}(\ca)$ of $\cat{cvNA}$ 
whose objects are the commutative sub-von Neumann algebras (contexts) of $\ca$ and whose morphisms are the inclusions between such subalgebras;
\item consider the inclusion functor $i_\ca$ of $\cat{C}(\ca)$ in $\cat{cvNA}$: this is an object of $\codiag(\cat{cvNA})$;
\item compose the (von Neumann) \Gelfand\ spectrum functor 
$\fdec{\Sigma}{\cat{cvNA}^\op}{\cat{HStonean}}$ with this inclusion functor to yield an object of $\diag(\cat{HStonean})$, i.e. a functor $\cat{C}(\ca)^\op \longrightarrow \cat{HStonean}$, called the spectral presheaf of $\ca$.
\end{enumerate}

This association of spectral presheaves to von Neumann algebras can be made functorial in a natural way.
Given an ultra-weakly continuous, or normal, unital $*$-homomorphism $\fdec{\phi}{\ca}{\cb}$,
we can define a $\codiag(\cat{cvNA})$-morphism $(f,\eta)$ as follows:
$\fdec{f}{\cat{C}(\ca)}{\cat{C}(\cb)}$ sends a commutative sub-von Neumann algebra $V$ of $\ca$ to the commutative sub-von Neumann algebra $\phi(V)$ of $\cb$, and an inclusion $V \hookrightarrow V'$ to the inclusion $\phi(V) \hookrightarrow \phi(V')$; while $\fdec{\eta}{i_\ca}{i_\cb \circ f}$ is the natural transformation with components $\eta_V$ defined to be $\fdec{\phi|_V}{V}{\phi(V)}$.
The \Gelfand\ spectrum functor for von Neumann algebras, $\fdec{\Sigma}{\cat{cvNA}^\op}{\cat{HStonean}}$,
lifts to a functor from $\fdec{\Sigma}{\codiag(\cat{cvNA})^\op}{\diag(\cat{HStonean})}$,
mapping $(f,\eta)$ to a $\diag(\cat{HStonean})$-morphism between the spectral presheaves of $\cb$ and of $\ca$.
Overall, this yields a functor of type $\cat{vNA}^\op \longrightarrow \diag(\cat{HStonean})$

We will generalise this recipe to \cstar s. However, we will also want to consider other choices of morphisms to include in our diagrams. In the next section, we see that certain ways of associating diagrams of spaces to algebras automatically yield a method for extending topological functors to functors that act on all unital \cstar s.
The family of morphisms we include in our diagrams determines the resulting method of extensions.  Thus, we will vary the family of morphisms in order to determine the one whose method of extending functors matches up with the canonical generalisation process of noncommutative geometry. This was the motivation behind the reconstruction of the definition of operator $K$-theory.

\begin{definition}\label{def:semispectral}
A functor $\fdec{\sigma}{\cat{uC^*}}{\codiag(\cat{ucC^*})}$ is called \emph{semispectral} if:
\begin{enumerate}
\item For any unital \cstar\ $\ca$, $\sigma(\ca)$ is the inclusion functor of a subcategory $\dom(\sigma(\ca))$
of $\cat{ucC^*}$ whose objects are unital commutative sub-\cstar s of $\ca$;
\item\label{condition:homo} For a unital $*$-homomorphism $\fdec{\phi}{\ca}{\cb}$, $\sigma(\phi)$ is the $\codiag(\cat{ucC^*})$-morphism $\fdec{(f, \eta)}{\sigma(\ca)}{\sigma(\cb)}$, where
$\fdec{f}{\dom(\sigma(\ca))}{\dom(\sigma(\cb))}$ takes a unital commutative sub-\cstar\ $V \subset A$ to the unital commutative sub-\cstar\ $\phi(V) \subset B$, and $\eta$ is the natural transformation with components $\eta_V$ being the unital $*$-homomorphisms $\fdec{\phi|_V}{V}{\phi(V)}$;
\item\label{condition:comm-terminal} If $\ca$ is commutative, then it is terminal in $\dom(\sigma(\ca))$.
\end{enumerate}

Similarly, a functor $\fdec{\sigma}{\cat{vNA}}{\codiag(\cat{cvNA})}$ is called \emph{semispectral} if the analogous
conditions hold, with ``sub-von Neumann algebras'' and ``normal unital $*$-homomorphisms'' substituted as appropriate.
\end{definition}

The third condition will be required below to ensure agreement in the commutative case between a functor and its extension.

\subsection{Spatial diagrams}\label{ssec:spatialdiagrams}
Our primary objects of study will be
spatial diagrams, which are ways of associating diagrams of topological spaces to unital \cstar s (or von Neumann algebras), determined by a semispectral functor. 
Given a semispectral functor, the corresponding spatial diagram functor is obtained via \Gelfand--\Naimark\ duality:

\begin{definition}\label{def:sdf}
Given a semispectral functor $\fdec{\sigma}{\cat{uC^*}}{\codiag(\cat{ucC^*})}$,
its corresponding \emph{spatial diagram functor} $\fdec{\SDF_\sigma}{\cat{uC^*}^\op}{\diag(\cat{KHaus})}$
is given as
\[\SDF_\sigma = \lift{\Sigma} \circ \sigma \; \colon \; \cat{uC^*} \longrightarrow \codiag(\cat{ucC^*}) \longrightarrow \diag(\cat{KHaus})^\op \Mcomma\]
where $\lift{\Sigma}$ is the \Gelfand\ spectrum functor $\cat{uC^*}^\op \longrightarrow \cat{KHaus}$ lifted to diagrams.

Analogously, given a (von Neumann) semispectral functor $\fdec{\sigma}{\cat{vNA}}{\codiag(\cat{cvNA})}$,
its corresponding spatial diagram functor $\fdec{\SDF_\sigma}{\cat{vNA}^\op}{\diag(\cat{HStonean})}$
is given as
\[\SDF_\sigma = \lift{\Sigma} \circ \sigma \; \colon \; \cat{vNA} \longrightarrow \codiag(\cat{cvNA}) \longrightarrow \diag(\cat{HStonean})^\op \Mcomma\]
where $\lift{\Sigma}$ is the (von Neumann) \Gelfand\ spectrum functor $\cat{cvNA}^\op \longrightarrow \cat{HStonean}$ lifted to diagrams.
\end{definition}

As explained in the previous section, the first example
(for von Neumann algebras)
is given by the spectral presheaf functor
which is obtained from the semispectral functor 
that sends a von Neumann algebra to the diagram consisting of its von Neumann subalgebras and inclusions between them.
An analogous definition of spectral presheaf can also be given for \cstar s.

For our main results, however, we will need to consider other semispectral functors (and corresponding spatial diagram functors),
which also take into account unitary equivalences between subalgebras. We now give these definitions.

\begin{definition}
Given a unital \cstar\ $\ca$, its \emph{unitary subcategory} $\cat{S}(\ca)$ of $\cat{ucC^*}$
has as objects the unital commutative sub-\cstar s of $\ca$, and as morphisms the restrictions of inner automorphisms of $\ca$.
That is, the morphisms between two unital commutative sub-\cstar s $V, V' \subset \ca$ are precisely those $\fdec{\ad|_V^{V'}}{V}{V'}$ of the form $\ad|_V^{V'}(v) = uvu^*$ for some unitary $u \in \ca$ such that $uVu^* \subset V'$.
\end{definition}

The composition of two such morphisms is given as conjugation by the product of their respective unitaries, which is also a unitary, and so $\cat{S}(\ca)$ is indeed a subcategory of $\cat{ucC^*}$.
Note that any morphism $\fdec{\ad|_V^{V'}}{V}{V'}$ can be decomposed as  $i \circ r$ where $r$ is the isomorphism ${\ad|_V^{uVu^*}}$ between $V$ and $uVu^*$ defined by conjugation by $u$ and where $i$ is the inclusion $uVu^* \hookrightarrow V'$.

\begin{definition}\label{def:unitarysemispectral-Cstar}
The \emph{unitary semispectral functor} $\fdec{g}{\cat{uC^*}}{\codiag(\cat{ucC^*})}$ sends a unital \cstar\ $\ca$ to the inclusion functor $\fdec{\iota_\ca}{\cat{S}(\ca)}{\cat{ucC^*}}$.
The action of $g$ on unital $*$-morphisms is fixed by Condition~\ref{condition:homo} in Definition~\ref{def:semispectral}: given a unital $*$-homomorphism $\fdec{\phi}{\ca}{\cb}$, its image $g(\phi)$ is $(f, \eta)$ where $\fdec{f}{\cat{S}(\ca)}{\cat{S}(\cb)}$ is the functor taking a unital commutative sub-\cstar\ $V \subset \ca$ to $\phi(V) \subset \cb$ and $\eta$ is the natural transformation whose component at $V$ is the unital $*$-homomorphism $\fdec{\phi|_V}{V}{\phi(V)}$.

We denote by $G$ the corresponding spatial diagram functor $\fdec{\SDF_g = \Sigma \circ g}{\cat{uC^*}^\op}{\diag(\cat{KHaus})}$.
\end{definition}

Note that when $\ca$ is commutative, the morphisms in $\cat{S}(\ca)$ are simply the inclusions, which is why Condition~\ref{condition:comm-terminal} of Definition~\ref{def:semispectral} holds.

The topological spaces in the diagram $G(\ca)$ should be thought of as being those that arise as quotient spaces of the hypothetical noncommutative space underlying $\ca$.
To see this, note that a sub-\cstar\ $V$ of $C(X)$ yields an inclusion $\fdec{i}{V}{C(X)}$ which corresponds to a continuous surjection $\fdec{\Sigma(i)}{X}{\Sigma(V)}$. This surjection is a quotient map since both spaces are compact and Hausdorff \cite[p. 12]{tomdieck2008algebraic}.
Thus, in accordance with the central tenet of noncommutative geometry, unital sub-\cstar s of a unital noncommutative algebra $\ca$ are to be understood as having an underlying noncommutative space that is a quotient space of the noncommutative space underlying $\ca$.
By considering only the commutative subalgebras, we are restricting our attention to the tractable quotient spaces: those that are genuine topological spaces.  The morphisms of the diagram serve to track how these quotient spaces fit together inside the larger noncommutative space.

We will require in our analysis of operator $K_0$ a slight modification of the unitary subcategory:

\begin{definition}\label{def:finitaryunitarysubcategory-CStar}
Given a unital \cstar\ $\ca$, its \emph{finitary unitary subcategory} $\cat{S}_f(\ca)$ of $\cat{ucC^*}$
has as objects the unital, \emph{finite-dimensional} commutative sub-\cstar s of $\ca$, and as morphisms the restrictions of inner automorphisms of $\ca$.
\end{definition}

This is used to define a functor $g_f$, which is a finitary version of the unitary semispectral functor $g$:

\begin{definition}\label{def:finitaryunitarysemispectral-CStar}
The \emph{finitary unitary semispectral functor} $\fdec{g_f}{\cat{uC^*}}{\codiag(\cat{ucC^*})}$ sends a unital \cstar\ $\ca$ to the inclusion functor $\fdec{\iota_\ca}{\cat{S}_f(\ca)}{\cat{ucC^*}}$.
For a unital $*$-homomorphism $\fdec{\phi}{\ca}{\cb}$, its image $g_f(\phi)$ is defined to be $(f, \eta)$ where $\fdec{f}{\cat{S}_f(\ca)}{\cat{S}_f(\cb)}$ is the functor taking a unital, finite-dimensional, commutative sub-\cstar\ $V \subset \ca$ to $\phi(V) \subset \cb$ and $\eta$ is the natural transformation whose component at $V$ is the unital $*$-homomorphism $\fdec{\phi|_V}{V}{\phi(V)}$.

We denote by $G_f$ the corresponding spatial diagram functor $\fdec{\SDF_{g_f} = \Sigma \circ g_f}{\cat{uC^*}^\op}{\diag(\cat{KHaus})}$.
\end{definition}

For the main result of Section~\ref{sec:ideals}, we will be dealing with von Neumann algebras only, and as such we require an analogous version of the spatial diagram functor $G$ in this setting:
\begin{definition}\label{def:unitarysemispectral-vN}
Given a von Neumann algebra $\ca$, its \emph{unitary subcategory}  $\cat{S}_\mathsf{W}(\ca)$ of $\cat{cvNA}$
has as objects the commutative sub-von Neumann algebras of $\ca$ and as morphisms the restrictions of inner automorphisms of $\ca$.

The \emph{(von Neumann) unitary semispectral functor} $\fdec{g_\mathsf{W}}{\cat{vNA}}{\codiag(\cat{cvNA})}$ sends a von Neumann algebra $\ca$ to the inclusion functor $\fdec{\iota_\ca}{\cat{S}_\mathsf{W}(\ca)}{\cat{cvNA}}$ and is defined on a normal unital $*$-homomorphism $\fdec{f}{\ca}{\cb}$ in a manner analogous to Definition~\ref{def:unitarysemispectral-Cstar}.

We denote by $G_\mathsf{W}$ the corresponding spatial diagram functor $\fdec{\SDF_{g_\mathsf{W}} = \Sigma \circ g_\mathsf{W}}{\cat{vNA}^\op}{\diag(\cat{HStonean})}$.
\end{definition}

\section{Extensions of topological functors}\label{sec:extensions}

We give a generalisation of limit and colimit functors that act on certain functor categories to ones that act on categories of diagrams. 
This allows us to define the extension of a topological functor to a noncommutative algebraic one, given a semispectral functor as described in the previous section. 
The extension process is interpreted as decomposing a noncommutative space into tractable quotient spaces, applying a topological functor to each one, and pasting the results together.
We illustrate this construction by presenting formulations of (generalised versions of)
the Bell--Kochen--Specker and Gleason's theorems in this framework.

\subsection{The generalised limit and colimit functors}\label{ssec:generalisedcolimit}

When a category $\catC$ is cocomplete,
it admits a colimit functor $\fdec{\colim}{\catC^\catA}{\catC}$
for diagrams over any fixed shape $\catA$. 

A key  feature of the construction of $\codiag(\catC)$ in the case where $\catC$ is cocomplete is the existence of a generalised colimit functor $\fdec{\colim}{\codiag(\catC)}{\catC}$.
It assigns to a diagram $\fdec{D}{\catA}{\catC}$ the same $\catC$-object that is assigned to $D$ by the usual colimit functor for $\catA$-shaped diagrams, of type $\catC^\catA \longrightarrow \catC$.
If $\eta$ is a natural transformation between $D$ and a diagram $\fdec{D'}{\catA}{\catC}$ of the same shape, i.e. a $\catC^\catA$-morphism, then the generalised $\colim$ functor maps the $\codiag(\catC)$-morphism $(id_\catA, \eta)$ between $D$ and $D'$ to the same $\catC$-morphism that is assigned to $\eta$ by the usual colimit functor $\catC^\catA \longrightarrow \catC$.
What is novel is the ability to assign $\catC$-morphisms between colimits of diagrams of different shapes to $\codiag(\catC)$-morphisms between these diagrams.

In this section, we give a concise description of the generalised colimit functor; in Appendix~\ref{app:colimits}, we present a more direct and explicit construction in terms of coequalisers and coproducts.  Everything in this section applies equally well---that is, all dual statements hold true---when $\catC$ is complete, in which case we have a generalised limit functor $\fdec{\llim}{\diag(\catC)}{\catC}$.

First, note that a $\codiag(\catC)$-morphism $(f,\eta)$ between diagrams $\fdec{D}{\catA}{\catC}$ and $\fdec{E}{\catB}{\catC}$ can be decomposed as
\begin{equation}\label{eq:decompose-feta}
(f,\eta) = (f, id_{E \circ f}) \circ (id_\catA, \eta) \; \colon \; D \longrightarrow E \circ f \longrightarrow E
\end{equation}
where $id_{E \circ f}$ is the identity natural transformation from the diagram $\fdec{E \circ f}{\catA}{\catC}$ to itself.

We shall show in detail
how to define the (functorial) action of $\fdec{\colim}{\codiag(\catC)}{\catC}$ on morphisms of the form 
$\fdec{(f, id_{E \circ f})}{E \circ f}{E}$ for $\fdec{f}{\catA}{\catB}$ and $\fdec{E}{\catB}{\catC}$. 
The action on morphisms of the form $(id_\catA, \eta)$ for $\eta$ a natural transformation between diagrams $\fdec{D, E}{\catA}{\catC}$
simply reduces to that of the usual colimit functor $\fdec{\colim}{\catC^\catA}{\catC}$ on the $\catC^\catA$-morphism $\eta$.
The action for a general morphism $(f,\eta)$ is then determined by decomposition \eqref{eq:decompose-feta} and the need to obey functoriality.

Recall that a cocone of a diagram $\fdec{E}{\catB}{\catC}$ is a pair $(K,k)$ consisting of a $\catC$-object $K$
together with a natural transformation $k$ from $E$ to the constant diagram to the fixed object $K$.  In other words, a cocone is an association to each $\catB$-object $b$ of a $\catC$-morphism $\fdec{k_b}{E(b)}{K}$
such that for every $\catB$-morphism $\fdec{h}{b}{b'}$ we have that $k_b = k_{b'} \circ E(h)$, i.e. the following diagram commutes:
\[
\xymatrix{
E(b) \ar[rr]^{E(h)} \ar[rd]_{k_b} &  & E(b')  \ar[ld]^{k_{b'}} \\
& K & 
}
\]
A cocone $(L, l)$ of $E$ is a colimit of $E$ if, for any other cocone $(K, k)$ of $E$, there is a unique $\catC$-morphism $\fdec{m}{L}{K}$ satisfying $k_b = m \circ l_b$ for every $\catB$-objects $b$. The situation is summarised in the following diagram:
\[\xymatrix{
E(b)  \ar@/_/[ddddrr]_{k_b} \ar[ddrr]_{l_b} \ar[rrrr]^{E(h)} &&&& E(b') \ar@/^/[ddddll]^{k_{b'}} \ar[ddll]^{l_{b'}} \\
&&&&\\
& &  \ar@{.>}^m[dd] L &&\\
&&&&\\
& &   K  & &
 }
\]
When a colimit of a diagram exists (as is the case for $E$ since we are assuming $\catC$ to be cocomplete), it is unique up to isomorphism. We can therefore speak of $\textit{the}$ colimit of $E$ and write $\colim(E)$ for the object $L$ above.

Given a $\codiag(\catC)$-morphism of the form $\fdec{(f, id_{E \circ f})}{E \circ f}{E}$, we must define a $\catC$-morphism from $\colim(E \circ f)$ to $\colim(E)$.
The fact that $(\colim(E), l)$ is a cocone for $E$ implies that $(\colim(E), l_f)$ is a cocone for $E \circ f$, where for an object $a$ of $\catA$, $(l_f)_a = l_{f(a)}$.
The definition of colimit for the diagram $E \circ f$
provides a uniquely determined  $\catC$-morphism $\fdec{m}{\colim(E \circ f)}{\colim(E)}$ 
that maps the cocone associated to the colimit of $E \circ f$ to the cocone $(\colim(E), l_f)$.
We can therefore define $\colim( f, id_{E \circ f} )$ to be this morphism $m$.
The universal property is then used to show functoriality of $\colim$ on the class of morphisms of the form $(f,id_{E \circ f})$.

\def\eqj#1{=\;\;\;\;&\;\;\;\;\;\;\;\;\{\text{{\small #1}}\}}

If we additionally have a natural transformation $\fdec{\mu}{E}{E'}$,
by a similar argument, all cocones of $E'$ yield cocones of $E \circ f$ and thus universality implies that there is a unique morphism
from $\colim(E \circ f)$ to $\colim(E')$. We thus conclude that
\begin{equation}\label{eq:connect-idmu-fid}
\colim(id_B,\mu) \circ \colim(f, id_{E\circ f}) = \colim(f, id_{E' \circ f}) \circ \colim(id_\catA,\mu_f) \Mcomma
\end{equation}
where $\colim$ is only being applied to morphisms of each of the two classes from decomposition \eqref{eq:decompose-feta}, for which it has already been defined.
Together with functoriality of the colimit on each of these two classes, this suffices to demonstrate that the entire action of the colimit is functorial:
if for diagrams $\fdec{D_i}{\catA_i}{\catC}$ with $i \in \{1,2,3\}$
we have morphisms $\fdec{(f, \eta)}{D_1}{D_2}$ and $\fdec{(g, \mu)}{D_2}{D_3}$,
then 
\begin{align*}
& \colim(g,\mu) \circ \colim(f,\eta)
\\
\eqj{by definition of $\colim$ on a general morphism}
\\ 
& \colim(g,id_{D_3 \circ g}) \circ \colim(id_{\catA_2},\mu) \circ \colim(f,id_{D_2 \circ f}) \circ \colim(id_{\catA_1},\eta)
\\
\eqj{by \eqref{eq:connect-idmu-fid}}
\\
& \colim(g,id_{D_3 \circ g}) \circ \colim(f, id_{D_3 \circ g \circ f}) \circ \colim(id_{\catA_1},\mu_f) \circ \colim(id_{\catA_1},\eta)
\\
\eqj{by functoriality of $\colim$ on each class}
\\
& \colim(g \circ f, id_{D_3 \circ g \circ f}) \circ \colim(id_{\catA_1},\mu_f \circ \eta)
\\
\eqj{by definition of $\colim$ on a general morphism}
\\
& \colim(g\circ f, \mu_f \circ \eta)
\\
\eqj{by definition of composition in $\codiag(\catC)$}
\\
& \colim((g,\mu) \circ (f,\eta)).
\end{align*}

We give an explicit description of this generalised colimit construction for the case of diagrams of abelian groups, which will be needed in Section~\ref{sec:Ktheory}.
This is computed as the instantiation
of the construction of colimits in terms of coequalisers and coproducts given in Appendix~\ref{app:colimits}
to the category $\cat{Ab}$ of abelian groups and group homomorphisms.

Let $\fdec{D}{\catA}{\cat{Ab}}$ and $\fdec{E}{\catB}{\cat{Ab}}$ be two diagrams of abelian groups and $(f, \eta)$ be a  $\codiag(\cat{Ab})$-morphism from $D$ to $E$.  

First, we describe the colimit of $D$ in $\cat{Ab}$, and thus its image under the functor $\fdec{\colim}{\codiag(\cat{Ab})}{\cat{Ab}}$.  Consider the direct sum of the groups $D(a)$ over all objects $a$ in $\catA$.
If $g$ is an element of the group $D(a)$, we use the notation $(g)_a$ to indicate the element of this direct sum with $g$ in the $a$-th component and 0 in all the others.
The colimit of $D$ is this direct sum modulo the identifications along the morphisms in the diagram $D$;
more precisely, modulo the subgroup generated by the elements $(g)_{a} - (D(h)(g))_{a'}$ where $g \in D(a)$ and $\fdec{h}{a}{a'}$ is an $\catA$-morphism.


We now describe the image of $(f, \eta)$ under the functor $\fdec{\colim}{\codiag(\cat{Ab})}{\cat{Ab}}$. 
It is enough to indicate how the group homomorphism $\colim(f, \eta)$ acts on elements of the colimit of $D$ of the form $[(g)_a]$.
It does so by mapping $[(g)_a]$ to the element $[(\eta_a(g))_{f(a)}]$ of the colimit of $E$.
This is well-defined for if an $\catA$-morphism $\fdec{h}{a}{a'}$ identifies, over $D$, $(g)_{a}$ with $(D(h)(g))_{a'}$,
then the $\catB$-morphism $\fdec{f h}{f(a)}{f(a')}$ identifies, over $E$,
$(\eta_a(g))_{f(a)}$ with $(E(f h) (\eta_a(g)))_{f(a')}$, which is equal to
$(\eta_{a'}(D(h)(g)))_{f(a')}$ by naturality of $\eta$, i.e. by $(E \circ f)(h) \circ \eta_a = \eta_{a'} \circ D(h)$.

\subsection{Extensions of functors}\label{ssec:extensionsfunctors}

For a fixed semispectral functor $\sigma$, we define a natural method for extending functors $\fdec{F}{\cat{KHaus}}{\catC}$ when $\catC$ is complete.
The idea is to use $\sigma$ to turn an algebra $\ca$ into a diagram of commutative algebras, apply the \Gelfand\ spectrum functor contextwise to this diagram to yield a diagram of topological spaces, apply $F$ contextwise to yield a diagram in $\catC$, and finally, apply the extended limit functor $\fdec{\llim}{\diag(\catC)}{\catC}$.  This procedure is also described for the von Neumann algebraic case.

Intuitively, one should think of the extension process as decomposing a noncommutative space into its quotient spaces, retaining those which are genuine topological spaces, applying the topological functor to each one of them, and pasting together the results.  Varying the semispectral functor effectively changes the precise method of gluing together the topological data into a single $\catC$-object.

\begin{definition}\label{def:sigma-extension}
Given a semispectral functor $\fdec{\sigma}{\cat{uC^*}}{\codiag(\cat{ucC^*})}$, a complete category $\catC$, and a functor $\fdec{F}{\cat{KHaus}}{\catC}$, the \emph{${\sigma}$-extension of F}, denoted $\fdec{\tf_\sigma}{\cat{uC^*}^\op}{\catC}$, is given by
\begin{align*}
\tf_\sigma \; =& \; \llim \circ \lift{F} \circ \SDF_\sigma
\\ \; \colon& \;
\cat{uC^*}^\op \longrightarrow \diag(\cat{KHaus}) \longrightarrow \diag(\catC) \longrightarrow \catC  \Mdot
\end{align*}
\end{definition}

Note that $F$ in the right-hand side of the above expression stands for the functor from
$\diag(\cat{KHaus})$ to  $\diag(\catC)$ induced by the given $\fdec{F}{\cat{KHaus}}{\catC}$ (see the last paragraph of Section~\ref{ssec:catsdiags}),
while $\SDF_\sigma$ is the spatial diagram functor corresponding to $\sigma$ (Definition~\ref{def:sdf}).

Extensions are analogously defined with respect to von Neumann algebraic semispectral functors $\fdec{\sigma}{\cat{vNA}}{\codiag(\cat{cvNA})}$, with a functor $\fdec{F}{\cat{HStonean}}{\catC}$ to a complete category $\catC$ yielding an extension $\fdec{\tf_\sigma}{\cat{vNA^*}^\op}{\catC}$.

In some applications
(notably in the case of the topological $K$-functor that we shall consider in the next section),
we are interested in extending a contravariant functor $F$ from $\cat{KHaus}$ to a cocomplete category $\catC$.
This is naturally covered by the definition above by applying it to $\catC^\op$ as the target category.
The process yields an extension $\fdec{\tf_\sigma}{\cat{uC^*}}{\catC}$,
whose last step---taking a limit in $\catC^\op$---corresponds to taking a colimit in $\catC$.
Explicitly, in this instance, we have
\begin{align*}
\tf_\sigma \; =& \; \llim_{\catC^\op} \circ \lift{F} \circ \SDF_\sigma
\\ \; \colon& \;
\cat{uC^*}^\op \longrightarrow \diag(\cat{KHaus}) \longrightarrow \diag(\catC^\op) \longrightarrow \catC^\op  \Mcomma
\end{align*}
but we could also write
\begin{align*}
\tf_\sigma \; =& \; \colim_\catC \circ \lift{F} \circ \SDF_\sigma
\\ \; \colon& \;
\cat{uC^*} \longrightarrow \diag(\cat{KHaus})^\op \longrightarrow \codiag(\catC) \longrightarrow \catC
\Mcomma
\end{align*}
with $\lift{F}$ in this expression standing for the contravariant functor from $\diag(\cat{KHaus})$ to $\codiag(\catC)$ induced by $\fdec{F}{\cat{KHaus}^\op}{\catC}$ (see the last paragraph of Section~\ref{ssec:catsdiags}).

%

The third property in the definition of semispectral functor---that the category picked out by $\sigma(\ca)$ has $\ca$ as a terminal object when $\ca$ is commutative---is crucial in ensuring that $\tf_\sigma$ does indeed extend $F$.
As a consequence of this condition, the diagram $(\lift{F} \circ \lift{\Sigma} \circ \sigma)(\ca)$ has $F(\Sigma(\ca))$ as an initial object, which is therefore equal to its limit (up to isomorphism).
Hence, we have that $\tf_\sigma(\ca) \simeq (F \circ \Sigma)(\ca)$ for every commutative $\ca$.
The second property then ensures that given a homomorphism $\fdec{\phi}{\ca}{\cb}$ between commutative algebras, $\tf_\sigma(\phi)$ completes the commutative square formed by these isomorphisms and $(F \circ \Sigma)(\phi)$, i.e.
\[
\xymatrix{
\tf_\sigma(\ca) \ar[dd]^{\tf_\sigma(\phi)} \ar[rr]^{\simeq} &  & (F \circ \Sigma)(\ca) \ar[dd]^{(F \circ \Sigma)(\phi)}
\\  \\
\tf_\sigma(\cb) \ar[rr]^{\simeq} &  &  (F \circ \Sigma)(\cb)
}
\]
commutes. Thus, these isomorphisms define a natural equivalence between $\tf_\sigma|_{\cat{uCC^*}^\op}$ and $F \circ \Sigma$.  We have thus proved that:

\begin{theorem}
For a semispectral functor $\sigma$, a complete category $\catC$, and a functor $\fdec{F}{\cat{KHaus}}{\catC}$, 
$\tf_\sigma|_{\cat{ucC^*}^\op}  \simeq F \circ \Sigma$.
\end{theorem}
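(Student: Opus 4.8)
The plan is to first establish the isomorphism objectwise, using Condition~\ref{condition:comm-terminal} of Definition~\ref{def:semispectral}, and then to promote it to a natural isomorphism using Condition~\ref{condition:homo}. Fix a unital commutative \cstar\ $\ca$. By Condition~\ref{condition:comm-terminal}, $\ca$ is terminal in $\dom(\sigma(\ca))$, hence \emph{initial} in the index category $\dom(\sigma(\ca))^\op$ over which the limit defining $\tf_\sigma(\ca)$ is computed. The diagram $\lift{F}(\SDF_\sigma(\ca)) \colon \dom(\sigma(\ca))^\op \longrightarrow \catC$ assigns to this initial object the value $F(\Sigma(\ca))$, together with the canonical maps $F(\Sigma(V \hookrightarrow \ca))$ into each $F(\Sigma(V))$. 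I would then invoke the standard fact that the limit of a diagram whose index category has an initial object is the value at that object; this yields an isomorphism $\theta_\ca \colon \tf_\sigma(\ca) \xrightarrow{\simeq} F(\Sigma(\ca))$ under which the limiting projection $\pi^\ca_V$ corresponds to $F(\Sigma(V \hookrightarrow \ca))$.

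For naturality, fix a unital $*$-homomorphism $\fdec{\phi}{\ca}{\cb}$ between commutative algebras. By Condition~\ref{condition:homo}, $\sigma(\phi) = (f,\eta)$ with $f(V) = \phi(V)$ and $\eta_V = \phi|_V$, so after applying $\lift{\Sigma}$ and $\lift{F}$ the induced $\diag(\catC)$-morphism from $\lift{F}(\SDF_\sigma(\cb))$ to $\lift{F}(\SDF_\sigma(\ca))$ has component $F(\Sigma(\phi|_V))$ at $V$ and reindexes by $f$. Unwinding the action of the generalised limit functor on morphisms---dual to the description given for $\colim$ in Section~\ref{ssec:generalisedcolimit}---the induced map $\tf_\sigma(\phi) \colon \tf_\sigma(\cb) \to \tf_\sigma(\ca)$ satisfies, for each $V \in \dom(\sigma(\ca))$, the cone relation $\pi^\ca_V \circ \tf_\sigma(\phi) = F(\Sigma(\phi|_V)) \circ \pi^\cb_{\phi(V)}$. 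Since a map into a limit is determined by its composites with the limiting projections, to check that $\theta_\ca \circ \tf_\sigma(\phi) = (F\circ\Sigma)(\phi)\circ\theta_\cb$---i.e.\ that the naturality square commutes---it suffices to compose with each $F(\Sigma(V \hookrightarrow \ca))$ and verify the equality $F(\Sigma(V \hookrightarrow \ca))\circ F(\Sigma(\phi)) = F(\Sigma(\phi|_V))\circ F(\Sigma(\phi(V)\hookrightarrow \cb))$. By functoriality of $F$ this is the image under $F\circ\Sigma$ of the evident factorisation $\phi \circ (V \hookrightarrow \ca) = (\phi(V)\hookrightarrow \cb)\circ \phi|_V$ of $*$-homomorphisms $V \to \cb$, so it holds.

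The step I expect to be the main obstacle is precisely this naturality verification, and the subtlety worth flagging is that $f$ does \emph{not} preserve the distinguished initial object: in general $\phi(\ca)$ is a proper subalgebra of $\cb$, so one cannot simply argue that $\tf_\sigma(\phi)$ carries ``value at the initial object'' to ``value at the initial object''. One is instead forced to use the full universal property of the limit together with the explicit action of $\llim$ on $\diag$-morphisms, with the naturality of the \Gelfand\ spectrum functor (\Gelfand--\Naimark\ duality) doing the essential work through the factorisation above. Finally, the von Neumann algebraic statement is proved verbatim, replacing $\cat{KHaus}$, $\cat{ucC^*}$ and the \Gelfand\ spectrum by $\cat{HStonean}$, $\cat{cvNA}$ and the von Neumann \Gelfand\ spectrum throughout.
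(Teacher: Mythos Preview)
Your proposal is correct and follows essentially the same approach as the paper: use Condition~\ref{condition:comm-terminal} to see that $F(\Sigma(\ca))$ is the value of the diagram at an initial object of the index category and hence equals the limit, then use Condition~\ref{condition:homo} to verify naturality. The paper is very terse on the naturality step---it simply asserts that the second property ensures the square commutes---whereas you spell out the cone relation $\pi^\ca_V \circ \tf_\sigma(\phi) = F(\Sigma(\phi|_V)) \circ \pi^\cb_{\phi(V)}$ and reduce it to the factorisation $\phi \circ (V \hookrightarrow \ca) = (\phi(V) \hookrightarrow \cb) \circ \phi|_V$; your observation that $f$ need not carry the initial object to the initial object (since $\phi(\ca)$ may be a proper subalgebra of $\cb$) is a genuine subtlety that the paper glosses over, and your use of the universal property of the limit to circumvent it is exactly the right move.
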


We are primarily interested the case that $\sigma$ is the unitary semispectral functor $g$ as in Definition~\ref{def:unitarysemispectral-Cstar}:
we shall reserve the notation $\tf$ to denote the extension $\tf_g$ of $F$ relative to this functor.
Similarly, we will write $\tf_f$ for $\llim \circ \lift{F} \circ \lift{\Sigma} \circ g_f$,
the finitary restriction of the extension of the contravariant functor $F$, using the finitary version of the unitary spectral functor given in Definition~\ref{def:finitaryunitarysemispectral-CStar}.
Similarly, we write $\tf_\mathsf{W}$ for the extension of a functor $\fdec{F}{\cat{HStonean}}{\catC}$ relative to the
unitary semispectral functor for von Neumann algebras, $g_\mathsf{W}$ from Definition~\ref{def:unitarysemispectral-vN}.

\subsection{Theorems of quantum foundations}\label{ssec:thms-quantum-foundations}

Having established the framework of extensions, we demonstrate how they can be used to succinctly express two fundamental theorems of quantum foundations:
the Bell--Kochen--Specker theorem \cite{bell1966,KochenSpecker} and Gleason's theorem \cite{gleason1957measures}.

The first of these reformulations is due to Hamilton, Isham, and Butterfield  \cite{oldtopos3}, here similarly stated for the generalised version of the Bell--Kochen--Specker theorem by D\"oring \cite{doring2005kochen}.
The second was given by de Groote \cite{degroote},
and it is a reformulation of the generalised version of Gleason's theorem to most von Neumann algebras,
due to Christensen \cite{Christensen1982:MeasuresOnProjectionsAndPhysicalStates}, Yeadon \cite{Yeadon1983:MeasuresOnProjectionsInW*algebrasOfTypeII1,Yeadon1984:FinitelyAdditiveMeasuresOnProjectionsInFiniteW*Algebras}, and others
(see \cite{Maeda1989:ProbabilityMeasuresOnProjectionsInVonNeumannAlgebras,hamhalter2003quantum}).

In this section, we consider the spectral presheaf functor
$\fdec{\Sigma \circ \sigma}{\cat{vNA}^\op}{\diag(\cat{HStonean})}$ obtained from a semispectral functor
$\fdec{\sigma}{\cat{vNA}}{\codiag(\cat{cvNA})}$
as described at the start of Section~\ref{ssec:spatialdiagrams}.
We write $\tf_\sigma = \llim \circ \lift{F} \circ \lift{\Sigma} \circ \sigma$ for the $\sigma$-extension of a functor $\fdec{F}{\cat{HStonean}}{\catC}$ whose target $\catC$ is a complete category.
We also restrict our extensions to the full subcategory of those von Neumann algebras that are separable and contain no type $I_2$ direct summands. In the statement of the Bell--Kochen--Specker, we also have to exclude abelian (type $I_1$) summands.


\begin{theorem}[Reformulation of the Bell--Kochen--Specker theorem \cite{oldtopos3,doring2005kochen}]The extension $\tilde{U}_\sigma$ of the forgetful functor $\fdec{U}{\cat{HStonean}}{\cat{Set}}$ yields the empty set on separable von Neumann algebras without type $I_1$ or $I_2$ summands.
\end{theorem}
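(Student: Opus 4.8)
The plan is to unfold the definition of the extension, recognise the resulting $\cat{Set}$-limit as the set of global sections of the spectral presheaf, and then invoke the generalised Bell--Kochen--Specker theorem to force emptiness. The whole argument is a chain of identifications, with the content supplied entirely by the no-go theorem already recalled in this section.

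First I would apply Definition~\ref{def:sigma-extension} in the von Neumann setting, so that for a von Neumann algebra $\ca$ the extension unfolds as $\tilde{U}_\sigma(\ca) = \llim(\lift{U} \circ \lift{\Sigma} \circ \sigma(\ca))$, a limit taken in the complete category $\cat{Set}$. Here $\lift{\Sigma} \circ \sigma(\ca)$ is the spectral presheaf of $\ca$: a functor $\cat{C}(\ca)^\op \to \cat{HStonean}$ sending each context $V$ to $\Sigma(V)$ and each inclusion $\iota\colon V \hookrightarrow V'$ to the restriction map $o \mapsto o|_V$. Composing with the forgetful functor $U$ leaves this diagram unchanged save that its values are now regarded as bare sets, and since the generalised limit functor agrees with the ordinary limit on a fixed-shape diagram, it suffices to compute the ordinary $\cat{Set}$-limit of this diagram.

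Second I would make that computation explicit. As $\cat{Set}$ is complete, the limit is the set of cones from the terminal one-point set, i.e.\ the set of compatible families $(o_V)_V$ with $o_V \in \Sigma(V)$ satisfying $o_V = o_{V'}|_V$ whenever $V \subseteq V'$. By the discussion following Definition~\ref{def:spectralpresheaf}, such a family is precisely a global section of the spectral presheaf, and these are in bijection with the valuations on $\ca$. Finally I would invoke the generalised Bell--Kochen--Specker theorem: if $\ca$ is separable and has no type $I_1$ or $I_2$ summands, then it admits no valuations, hence no global sections, and so $\tilde{U}_\sigma(\ca)$ is empty.

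The only nonroutine step is the second identification: one must check that applying $U$ contextwise and then forming the limit in $\cat{Set}$ recovers exactly the set-theoretic global sections, rather than a coarser object. This amounts to observing that $U$ carries the restriction maps to themselves and that a cone over the resulting $\cat{Set}$-diagram is determined precisely by a compatible family of points. I would also remark that the conclusion is robust to the choice of semispectral functor: enlarging the morphism class of the diagram (for instance passing to $g_\mathsf{W}$, whose morphisms include the inclusions as conjugations by the identity unitary) only imposes further compatibility constraints, so the corresponding limit embeds into, and is therefore contained in, the already-empty set of global sections.
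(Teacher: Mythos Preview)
Your proposal is correct and is precisely the argument the paper intends: the theorem is stated without proof in the paper, as a direct reformulation of the earlier-stated result that the spectral presheaf of such a von Neumann algebra has no global sections, and your unfolding of $\tilde{U}_\sigma(\ca)$ as the $\cat{Set}$-limit of $U \circ \Sigma \circ \sigma(\ca)$, identified with the set of global sections, is exactly what that reformulation amounts to. Your closing remark on robustness under enlarging the morphism class (e.g.\ to $g_\mathsf{W}$) is a correct and useful addition not made explicit in the paper.
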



This can be interpreted as saying that the notion of `points' cannot be extended (in our sense) from the commutative to the noncommutative world, or that a `noncommutative space' is not a geometry in the usual sense: a collection of `points' with some additional structure.

Let $\fdec{D}{\cat{HStonean}}{\cat{Set}}$ be the regular Borel probability distribution functor which assigns to a hyperstonean topological space $X$ the set of all regular\footnote{A Borel measure $\mu$ is said to be \emph{regular} if, for any Borel set $X$,
\[\mu(X) = \inf\setdef{\mu(U)}{X \subset U, \text{$U$ open}} = \sup\setdef{\mu(K)}{K\subset X, \text{$K$ compact}} \Mdot \]
Note that for compact spaces, these coincide with the (in general, weaker) notion of Radon measure \cite[Corollary 7.6]{Folland:RealAnalysis}. By the Riesz--Markov--Kakutani representation theorem \cite{Riesz1909,Markov1938:MeanValuesExteriorDensities,Kakutani1941:ConcreteRepresentation} (see e.g. \cite[Theorem 7.2]{Folland:RealAnalysis}), regular Borel measures on a compact Hausdorff space $X$ are in one-to-one correspondence with positive linear functionals of unit norm, i.e. states, of the commutative \cstar\ $C(X)$.} Borel probability measures on $X$ and to a continuous function $f$ the corresponding pushforward map $f_*$ on measures, defined by $f_*\mu (e) = \mu(f^{-1}(e))$.

\begin{theorem}[Reformulation of the Gleason--Christensen--Yeadon theorem \cite{degroote}]The extension $\tilde{D}_\sigma$ of the regular Borel probability distribution functor is naturally isomorphic, on the subcategory of separable von Neumann algebras without type $I_2$ summands, to the contravariant functor mapping such a von Neumann algebra to its set of states (positive linear functionals of unit norm) and a normal unital $*$-homomorphisms $\fdec{\phi}{\ca}{\cb}$ to the corresponding pullback that takes a state $\rho$ of $\cb$ to the state $\rho \circ \phi$ of $\ca$.
\end{theorem}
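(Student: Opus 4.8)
The plan is to unwind the extension into the data of a compatible family of measures over the contexts of $\ca$, re-express this via \Gelfand--\Naimark\ duality as a finitely additive measure on the projection lattice, invoke the generalised Gleason theorem to identify such data with states of $\ca$, and finally verify naturality by a direct diagram chase. Throughout I take $\sigma$ to be the inclusion-based (spectral presheaf) semispectral functor, so that the relevant index category is the context category $\cat{C}(\ca)$ with inclusions as morphisms. First I would unpack $\tilde{D}_\sigma = \llim \circ \lift{D} \circ \lift{\Sigma} \circ \sigma$. Applied to $\ca$, the composite $(\lift{D} \circ \lift{\Sigma} \circ \sigma)(\ca)$ is the presheaf over $\cat{C}(\ca)$ sending each context $V$ to $D(\Sigma(V))$, the set of regular Borel probability measures on $\Sigma(V)$, and sending an inclusion $\fdec{\iota}{V}{V'}$ to the pushforward along the restriction map $\fdec{\Sigma(\iota)}{\Sigma(V')}{\Sigma(V)}$. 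Taking the limit in $\cat{Set}$, an element of $\tilde{D}_\sigma(\ca)$ is exactly a family $(\mu_V)_{V \in \cat{C}(\ca)}$ of regular Borel probability measures, compatible under restriction in the sense that $\Sigma(\iota)_* \mu_{V'} = \mu_V$ whenever $V \subseteq V'$.

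Next I would apply the Riesz--Markov--Kakutani correspondence (recalled in the footnote to the theorem statement) to identify each $\mu_V$ with a state $\rho_V$ on $V \simeq C(\Sigma(V))$, under which pushforward along $\Sigma(\iota)$ becomes restriction of states; compatibility then reads $\rho_{V'}|_V = \rho_V$ for $V \subseteq V'$. Restricting to projections (which correspond to clopen subsets of $\Sigma(V)$), such a compatible family determines a well-defined function $\fdec{m}{\cp(\ca)}{[0,1]}$ by $m(p) = \rho_{W_p}(p)$, where $W_p$ is the commutative von Neumann algebra generated by $p$: this is unambiguous because any two contexts containing $p$ both restrict to $\rho_{W_p}$, and it is additive on orthogonal projections since any such pair lies in a common context on which $\rho$ is linear. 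Conversely, a finitely additive probability measure on $\cp(\ca)$ restricts over each context to a finitely additive measure on the clopen algebra of $\Sigma(V)$, which extends uniquely---using that $\Sigma(V)$ is Stonean---to a regular Borel measure, recovering a compatible family. The care needed here is purely measure-theoretic: confirming this unique extension, and checking that the not-necessarily-normal measures appearing in $D$ match the not-necessarily-normal (i.e.\ arbitrary) states of the target functor.

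I would then invoke the generalised Gleason theorem of Christensen and Yeadon \cite{Christensen1982:MeasuresOnProjectionsAndPhysicalStates,Yeadon1983:MeasuresOnProjectionsInW*algebrasOfTypeII1,Yeadon1984:FinitelyAdditiveMeasuresOnProjectionsInFiniteW*Algebras}: for a separable von Neumann algebra $\ca$ without type $I_2$ summands, finitely additive probability measures on $\cp(\ca)$ are in bijection with states of $\ca$, the bijection sending a state $\rho$ to its restriction $\rho|_{\cp(\ca)}$. Composing the three correspondences exhibits a bijection between $\tilde{D}_\sigma(\ca)$ and the set of states $\mathrm{St}(\ca)$, whose inverse sends a state $\rho$ to the compatible family $(\rho|_V)_{V}$. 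This is the deep analytic input, but it is precisely the cited theorem, so the only work on my side is to verify that the hypotheses (separability, absence of type $I_2$ summands) and the finitely-additive formulation line up exactly with what the chain of identifications produces.

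Finally I would check naturality. For a normal unital $*$-homomorphism $\fdec{\phi}{\ca}{\cb}$, the semispectral functor gives $\sigma(\phi) = (f,\eta)$ with $f(V) = \phi(V)$ and $\eta_V = \fdec{\phi|_V}{V}{\phi(V)}$. Unwinding the action of $\llim$ on this morphism---the $\cat{Set}$-dual of the explicit description given for $\cat{Ab}$ in Section~\ref{ssec:generalisedcolimit}, namely that a limit element $(x_a)_a$ is sent to the family $b \mapsto \eta_b(x_{f(b)})$---the induced map $\fdec{\tilde{D}_\sigma(\phi)}{\tilde{D}_\sigma(\cb)}{\tilde{D}_\sigma(\ca)}$ sends a compatible family $(\mu_W)_W$ on $\cb$ to the family $(\nu_V)_V$ on $\ca$ with $\nu_V$ the state $v \mapsto \mu_{\phi(V)}(\phi(v))$. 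If $(\mu_W)$ corresponds to the state $\tau$ on $\cb$, so that $\mu_W$ corresponds to $\tau|_W$, then $\nu_V = (\tau \circ \phi)|_V$, which is exactly the compatible family of the state $\tau \circ \phi$ on $\ca$; thus $\tilde{D}_\sigma(\phi)$ corresponds under the bijections to the pullback $\tau \mapsto \tau \circ \phi$. The main obstacle is not any single step but the bookkeeping that aligns the three successive identifications so that they compose to restriction and remain natural, since all the genuinely hard mathematics is absorbed into the cited Gleason--Christensen--Yeadon theorem.
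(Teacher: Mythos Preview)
The paper does not supply its own proof of this theorem: it is stated with attribution to de Groote and left without argument, serving only as an illustration of the extension framework. Your proposal is the natural proof and is correct in outline: unwinding the limit to obtain a compatible family of contextual states, passing via Riesz--Markov--Kakutani to a finitely additive probability measure on $\cp(\ca)$, invoking the cited Gleason--Christensen--Yeadon theorem to identify such measures with states, and then checking naturality by tracing through the action of $\llim$ on the morphism $\sigma(\phi)=(f,\eta)$ is exactly the argument one would expect, and your bookkeeping of the directions and of the pushforward-versus-restriction correspondence is accurate.
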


These two theorems can be read as indicating that while the `noncommutative space' $\Sigma \circ \sigma$ has no points, it nonetheless
admits globally consistent probability distributions, and that these correspond to quantum states.

\section{Reconstructing operator $K$-theory}\label{sec:Ktheory}


Topological $K$-theory, defined in terms of vector bundles, is an extraordinary cohomology theory.  Its $C^*$-algebraic generalisation, operator $K$-theory, is similarly defined in terms of the noncommutative analogue of vector bundles, i.e. finitely generated projective modules, and plays an important role in the study of \cstar s, e.g. as a classifying invariant \cite{elliott1995classification}.

%

In this section, we consider the extension of the topological $K$-theory functor, $\fdec{K}{\cat{KHaus}^\op}{\cat{Ab}}$.
The most natural conjecture is that this extension yields operator $K$-theory on unital \cstar s, $\fdec{K_0}{\cat{uC^*}}{\cat{Ab}}$:\footnote{This would be sufficient to recover operator $K$-theory on arbitrary \cstar s, since the functor $\fdec{K_0}{\cat{C^*}}{\cat{Ab}}$ is obtained from its unital version via unitalisation, as we shall see.}

\begin{conjecture}[\cite{deSilva:Ktheory}]
$\fdec{K_0 \simeq \tk}{\cat{uC^*}}{\cat{Ab}}$.
\end{conjecture}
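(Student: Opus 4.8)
The plan is to exhibit a natural comparison homomorphism $\fdec{\Phi}{\tk}{K_0}$ of functors $\cat{uC^*} \to \cat{Ab}$ and then to decide, for each unital \cstar\ $\ca$, whether the component $\fdec{\Phi_\ca}{\tk(\ca)}{K_0(\ca)}$ is an isomorphism. First I would unwind $\tk(\ca)$ using the explicit description of the generalised colimit in $\cat{Ab}$ from Section~\ref{ssec:generalisedcolimit}. Since $K$ is contravariant and $\cat{Ab}$ cocomplete, $\tk = \colim_{\cat{Ab}} \circ \lift{K} \circ G$ computes, for each $\ca$, the colimit over the unitary subcategory $\cat{S}(\ca)$ of the groups $K(\Sigma(V))$ indexed by the unital commutative sub-\cstar s $V \subseteq \ca$, with identifications induced by restrictions of inner automorphisms of $\ca$. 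Invoking the agreement of topological and operator $K$-theory on commutative \cstar s — the Serre--Swan identification of vector bundles with finitely generated projective modules, giving a natural isomorphism $K(\Sigma(V)) \cong K_0(V)$ (cf.\ Appendix~\ref{app:ktheory}) — this rewrites as $\tk(\ca) \cong \colim_{V \in \cat{S}(\ca)} K_0(V)$.

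Next I would build the cocone defining $\Phi_\ca$. Each inclusion $\fdec{\iota_V}{V}{\ca}$ induces $\fdec{K_0(\iota_V)}{K_0(V)}{K_0(\ca)}$, and to see these assemble into a cocone over the diagram the key input is that the morphisms of $\cat{S}(\ca)$ are restrictions $\ad|_V^{V'}$ of inner automorphisms, together with the standard fact that conjugation by a unitary induces the identity on $K_0$ (inner automorphisms are $K_0$-trivial; cf.\ Appendix~\ref{app:ktheory}). Hence for each morphism $\fdec{\ad|_V^{V'}}{V}{V'}$ the triangle $K_0(V) \to K_0(V') \to K_0(\ca)$ commutes, and the universal property of the colimit yields the canonical map $\fdec{\Phi_\ca}{\colim_{V} K_0(V)}{K_0(\ca)}$. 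Naturality of $\Phi$ in $\ca$ would follow from Condition~\ref{condition:homo} of Definition~\ref{def:semispectral} by tracking the components $\phi|_V$ through the element-level colimit formula at the end of Section~\ref{ssec:generalisedcolimit}.

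It then remains to prove $\Phi_\ca$ is an isomorphism, and here I would separate the two directions. For injectivity I would relate identifications in the colimit to equivalence of projections in $\ca$: if classes coming from $V$ and $W$ become equal in $K_0(\ca)$, then after passing to a common amplification the witnessing unitary equivalence is of the form recorded by a morphism of $\cat{S}(\ca)$, so the classes are already identified in the colimit; I expect this direction to go through with care about stabilisation and amplification. The surjectivity step is where I expect the essential obstacle to lie. A class of $K_0(\ca)$ is represented by a projection $P$ in a matrix amplification $M_n(\ca)$, and through $C^*(P, 1_{M_n(\ca)})$ every such $P$ sits inside a commutative subalgebra of $M_n(\ca)$; but the image of $\Phi_\ca$ is generated only by classes of projections lying in $M_n(V)$ for commutative $V \subseteq \ca$ — that is, projections whose matrix entries pairwise commute. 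Commutative subalgebras of $M_n(\ca)$ of this amplified form $M_n(V)$ are strictly fewer than all commutative subalgebras of $M_n(\ca)$, so a projection in, say, $M_2(\ca)$ with genuinely noncommuting entries is invisible to the diagram $G(\ca)$, which records only the commutative subalgebras of $\ca$ itself.

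Consequently I expect $\Phi_\ca$ to be injective but not in general surjective, so that the conjecture as literally worded cannot hold: the precise gap is the matrix amplification intrinsic to the definition of $K_0$, which $G(\ca)$ cannot see. The natural remedy is to replace $\ca$ by its stabilisation $\ck(\ca)$, whose commutative subalgebras do capture the subalgebras $C^*(P,1)$ for projections $P$ in every $M_n(\ca)$; this is exactly why the honest result takes the finitary stabilised form $K_0 \simeq \tk_f \circ \ck$ rather than $K_0 \simeq \tk$ as stated.
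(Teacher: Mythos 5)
You were asked to prove a statement that the paper itself presents only as a conjecture, and the paper neither proves nor refutes it: Section~\ref{sec:conclusions} states explicitly that whether $K_0 \simeq \tk$ holds in general---or even whether $K_0 \simeq \tk \circ \ck$ holds---remains an open question, and what is actually established is the strictly weaker finitary, stabilised statement $K_0 \simeq \tk_f \circ \ck$ (Theorem~\ref{thm:Ktheory}). Your construction of the comparison map is sound as far as it goes: the morphisms of $\cat{S}(\ca)$ are restrictions of inner automorphisms, conjugation by a unitary of $\ca$ acts trivially on $K_0$, so the maps $K_0(V) \to K_0(\ca)$ do form a cocone and induce a canonical $\Phi_\ca$. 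But both of your decisive claims about $\Phi_\ca$ have gaps, so your proposal neither proves the conjecture nor, as you assert at the end, disproves it.

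Concretely: your injectivity argument fails at exactly the point you wave through. Equality of classes in $K_0(\ca)$ is witnessed by partial isometries or unitaries in matrix amplifications $M_{m,n}(\ca)$, and ``passing to a common amplification'' is not a move available inside the diagram $G(\ca)$, whose morphisms record conjugation by unitaries of $\ca$ \emph{only}; a Murray--von Neumann equivalence implemented in $M_2(\ca)$ is simply not a morphism of $\cat{S}(\ca)$. This is precisely the reason the paper passes to the stabilisation (where, by the reformulation in Definition~\ref{def:MvNsemigroup-alternative}, all the relevant equivalences are implemented by unitaries acting on projections of the algebra itself), so assuming the witness lives in the diagram begs the question. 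Symmetrically, your surjectivity obstruction shows at most that the \emph{canonical} map $\Phi_\ca$ need not be surjective; it exhibits no algebra with $\tk(\ca) \not\cong K_0(\ca)$ and does not exclude a different natural isomorphism, so the conclusion that the conjecture ``cannot hold as literally worded'' is an overclaim---indeed the paper notes it does hold for finite-dimensional algebras and leaves the general case open. Finally, stabilisation alone is not the paper's remedy: even for $\ck(\ca)$, the full extension $\tk$ receives contributions $K(\Sigma(V))$ from commutative subalgebras with topologically nontrivial spectra, i.e.\ classes of nontrivial vector bundles that the diagram's morphisms need not identify with projection classes. That is why Theorem~\ref{thm:Ktheory} additionally restricts to finite-dimensional subalgebras via $\tk_f$, whose groups are free on atomic projections (Lemma~\ref{lemma:kfpresentation}). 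Your closing sentence names the correct theorem but attributes the repair entirely to stabilisation, missing the independent role of the finitary restriction.
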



As detailed in Section~\ref{ssec:opKtheory-technicalbackground}, operator $K$-theory is stable.
That is,  we have $K_0 \simeq K_0 \circ \ck$, where $\ck$ is the stabilisation functor (see Definition~\ref{def:stab-functor} below).
Therefore, one could think of operator $K$-theory as being defined only for the stabilisations of unital \cstar s.
The $K$-theory of other unital \cstar s can be obtained by first stabilising the algebra and then applying the $K_0$-functor restricted to this class of algebras; moreover, the $K$-theory of arbitrary \cstar s can then be obtained as usual via unitalisation.



This justifies weakening the conjecture to require only that $K_0$ and $\tk$
agree on the full subcategory of non-unital algebras arising as stabilisations of unital \cstar s.
Note that this necessitates extending $\tk$ to nonunital \cstar s, which is done via unitalisation,
following the same procedure used to extend $K_0$ from unital to all \cstar s.
Equivalently, this weakened conjecture would
require that $K_0 \simeq K_0 \circ \ck \simeq \tk \circ \ck$ as functors $\cat{uC^*} \longrightarrow \cat{Ab}$
(and consequently, that the extensions via unitalisation of $K_0$ and $\tk \circ \ck$ be naturally isomorphic functors of type $\cat{C^*} \longrightarrow \cat{Ab}$).

In fact, we encounter the need to further modify this conjecture by limiting our spatial diagrams of stable \cstar s to include only the finite quotient spaces. The main result proven in this section is thus the following.

\thmKtheory

Here, $\tk_f$ is defined for unital \cstar s as the extension of $K$ relative to the finitary version of the unitary semispectral functor given in Definition~\ref{def:finitaryunitarysemispectral-CStar}; see also Definition~\ref{def:sigma-extension} of the general extension process and the subsequent remarks regarding contravariant functors.
Explicitly, 
\begin{align*}
\tk_f \; =& \;  \colim \circ \lift{K} \circ G_f 
\\
\; \colon& \;  \cat{uC^*} \longrightarrow \diag(\cat{KHaus})^\op \longrightarrow \codiag(\cat{Ab}) \longrightarrow \cat{Ab}
\Mdot
\end{align*}
Note that as stable \cstar s are nonunital, we need to extend $\tk_f$ to nonunital \cstar s using unitalisation.

We thus find that operator $K$-theory, $K_0$, can be defined as a colimit of topological vector bundles over \emph{finite} quotient spaces of a noncommutative space.  This result suggests fixing the appropriate class of morphisms in our ansatz semispectral functor, i.e. the diagrams associated to \cstar s, to be the restrictions of inner automorphisms.

\subsection{Technical background: Topological $K$-theory}\label{ssec:topKtheory-technicalbackground}
We introduce the basic background on topological and operator $K$-theory,
focusing on the minimum required to follow the proof of Theorem~\ref{thm:Ktheory}.
A slightly more detailed presentation can be found in Appendix~\ref{app:ktheory},
or see e.g. \cite{rordam,wegge1993k,fillmore}.

\begin{definition}
For a compact Hausdorff space $X$, the \emph{vector bundle monoid} $V(X)$ is the set of isomorphism classes of complex vector bundles over $X$ with the abelian addition operation of fibrewise direct sum: $[E] + [F] = [E \oplus F]$.  A continuous function $\fdec{f}{X}{Y}$  yields a monoid homomorphism $\fdec{V(f)}{V(Y)}{V(X)}$ by the pullback of bundles, $V(f)([E]) = [f^*E]$.  This defines a functor $\fdec{V}{\cat{KHaus}^\op}{\cat{AbMon}}$, where $\cat{AbMon}$ stands for the category of abelian monoids and monoid homomorphisms.
\end{definition} 

\begin{definition}For an abelian monoid $M$, the \emph{Grothendieck group} of $M$,  $\cg(M)$, is the abelian group $(M \times M)\slash{\sim}$ where $\sim$ is the equivalence relation given by
\[(a, b) \sim (c, d) \Miff \Exists{e \in M} a + d + e =b +c +e \Mdot\]
For a monoid homomorphism $\fdec{\phi}{M}{N}$, the group homomorphism $\fdec{\cg(\phi)}{\cg(M)}{\cg(N)}$ is given by  $\cg(\phi)([(a, b)])  = [(\phi(a),\phi(b))]$.  This defines a functor $\fdec{\cg}{\cat{AbMon}}{\cat{Ab}}$.
\end{definition}

Intuitively, an element $[(a,b)]$ of $\cg(M)$ can be thought of as a formal difference $a-b$ of elements of $M$.
With this interpretation in mind, it is easy to see that $\cg(M)$ is indeed a group, with addition given componentwise, neutral element $[(0,0)]$, and the inverse of $[(a,b)]$ equal to $[(b,a)]$. Moreover, there is a monoid homomorphism $\fdec{i}{M}{\cg(M)}$ given by $a \longmapsto [(a,0)]$.
As an example, the Grothendieck group of the additive monoid of natural numbers (including zero) is the additive group of the integers.

The Groethendieck group functor $\cg$ is an explicit presentation of the group completion functor,
the left adjoint to the forgetful functor from $\cat{Ab}$ to $\cat{AbMon}$.
This means that  $\cg(M)$ is the `most general' group containing a homomorphic image of $M$, in the sense that it satisfies the universal property that 
any monoid homomorphism from $M$ to an abelian group factors uniquely through the homomorphism $\fdec{i}{M}{\cg(M)}$.

\begin{definition}
The \emph{topological $K$-functor} $\fdec{K}{\cat{KHaus}^\op}{\cat{Ab}}$ is $\cg \circ V$.
\end{definition}

\subsection{Technical background: Operator $K$-theory}\label{ssec:opKtheory-technicalbackground}
Following the usual method of noncommutative geometry, in order to generalise a topological concept to the noncommutative case, one must begin with a characterisation of the topological concept in question in terms of commutative algebra.
In the case of $K$-theory, this requires phrasing the notion of a \emph{complex vector bundle over $X$} in terms of the algebra $C(X)$ of continuous, complex-valued functions on $X$:

\begin{theorem}[Serre--Swan, \cite{swan}]
The category of complex vector bundles over a compact Hausdorff space $X$ is equivalent to the category of finitely generated projective $C(X)$-modules.
\end{theorem}

Finitely generated projective $\ca$-modules can be identified with (equivalence classes of) projections in matrix algebras $M_n(\ca)$ over the \cstar\ $\ca$, which are more convenient to work with. 
We are now ready to define operator $K_0$ for unital \cstar s.

\begin{definition} Let $\ca$ be a \cstar. Two projections $p \in M_n(\ca)$ and $q \in M_m(\ca)$ are Murray--von Neumann equivalent, denoted $p \simMvN q$, whenever there is a partial isometry $v \in M_{m,n}(\ca)$ such that $p = vv^*$ and $q = v^*v$.
\end{definition}

\begin{definition}[The Murray--von Neumann semigroup for unital $\ca$]
Let $\ca$ be a unital \cstar.
Its \emph{Murray--von Neumann semigroup}, $V_0(\ca)$, is the set of Murray--von Neumann equivalence classes of projections in matrices over $\ca$:
\[ \left.\bigsqcup_{n \in \NN} \setdef{p \in M_n(\ca)}{p \text{ is a projection}} \middle\slash {\simMvN}\right. \Mdot \]
It is equipped with the abelian addition operation
\[[p] + [q] = \left[\left( \begin{array}{ccc}
p & 0 \\
0 & q \\
\end{array} \right) \right]\Mcomma\]
for which the equivalence class of the zero projection is a neutral element, therefore forming an abelian monoid.
A unital $*$-homomorphism $\fdec{\phi}{\ca}{\cb}$ yields a monoid homomorphism
$\fdec{V_0(\phi)}{V_0(\ca)}{V_0(\cb)}$ given by
$[p] \longmapsto [M_n(\phi)(p)]$
for each $n \in \NN$ and $p$ a projection in $M_n(\ca)$,
where $M_n(\phi)$ acts on elements of $M_n(\ca)$ by entrywise application of $\phi$.
This defines a functor $\fdec{V_0}{\cat{uC^*}}{\cat{AbMon}}$.
\end{definition}

\begin{definition}
The \emph{operator $K_{0}$-functor for unital \cstar s}, $\fdec{K_{0}}{\cat{uC^*}}{\cat{Ab}}$,
is $\cg \circ V_0$.
\end{definition}

We now describe the extension of $K_0$ to all \cstar s. The same recipe will later be used to extend other functors from unital to all \cstar s.

Let $\ca$ be a \cstar\ (which may be unital or nonunital).
By minimally adjoining a unit element to $\ca$, one obtains the unitalisation $\ca^+$ (see Definition~\ref{def:minunitalisation}) and a short exact sequence
\[0 \longrightarrow \ca \xlongrightarrow{\iota} \ca^+ \xlongrightarrow{\pi} \C \longrightarrow 0 \Mdot\]
Moreover, $(-)^+$ is a functor from $\cat{C^*}$ to $\cat{uC^*}$.

\begin{definition}\label{def:unital-to-not}
The $K_0$ group of a \cstar\ $\ca$ (unital or not) is the subgroup of $K_0(\ca^+)$ given by the kernel of $K_0(\pi)$.
A $*$-homomorphism $\fdec{\phi}{\ca}{\cb}$ yields a homomorphism from $\ker K_0(\ca^+ \xlongrightarrow{\pi} \C)$ to $\ker K_0(\cb^+ \xlongrightarrow{\pi} \C)$ by restriction of $K_0(\phi^+)$ to the kernel of $K_0(\ca^+\ \xlongrightarrow{\pi} \C)$.
This defines the \emph{operator $K_0$-functor}, $\fdec{K_{0}}{\cat{C^*}}{\cat{Ab}}$.
\end{definition}

We now consider stability, a key property of the operator $K_0$-functor.

\begin{definition}\label{def:stab-functor}
The \emph{stabilisation functor} $\fdec{\ck}{\cat{C^*}}{\cat{C^*}}$ maps a \cstar\ $\ca $ to the \cstar\ $\ca \otimes \ck$ where $\ck$ is the \cstar\ of compact operators on a separable infinite-dimensional Hilbert space.
A $*$-homomorphism $\fdec{\phi}{\ca}{\cb}$ is mapped to $\phi \otimes id_\ck$.
\end{definition}

Since the \cstar\ $\ck \otimes \ck$ is isomorphic to $\ck$,
the stabilisation functor is an idempotent operation, i.e. $\ck \circ \ck \simeq \ck$.

\begin{definition}
A \cstar\ $\ca$ is called \emph{stable} if $\ca$ is isomorphic to its stabilisation $\ck(\ca) = \ca \otimes \ck$.
\end{definition} 

\begin{theorem}
Operator $K$-theory is stable.  That is, $K_0 \simeq K_0 \circ \ck$.
\end{theorem}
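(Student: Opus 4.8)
The plan is to exhibit an explicit natural transformation $K_0 \Rightarrow K_0 \circ \ck$ and then show each of its components is an isomorphism by realising the stabilisation as an inductive limit of matrix algebras. Fix once and for all a rank-one projection $e \in \ck$. For each \cstar\ $\ca$, the \emph{corner embedding} $\fdec{\iota_\ca}{\ca}{\ca \otimes \ck}$, $a \longmapsto a \otimes e$, is a (nonunital) $*$-homomorphism, and since $e$ is fixed, the square relating $\iota_\ca$, $\iota_\cb$, a $*$-homomorphism $\fdec{\phi}{\ca}{\cb}$, and $\phi \otimes id_\ck$ commutes. Applying $K_0$ (in its nonunital form, Definition~\ref{def:unital-to-not}) therefore yields a natural transformation with components $\fdec{K_0(\iota_\ca)}{K_0(\ca)}{K_0(\ca \otimes \ck) = (K_0 \circ \ck)(\ca)}$, and it remains only to prove that each such component is an isomorphism.

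To do so, first I would recall that the compact operators decompose as an inductive limit $\ck \simeq \varinjlim_n M_n(\C)$ along the top-left corner inclusions $M_n(\C) \hookrightarrow M_{n+1}(\C)$. Because the minimal tensor product commutes with inductive limits, tensoring with $\ca$ gives
\[ \ca \otimes \ck \;\simeq\; \varinjlim_n \bigl(\ca \otimes M_n(\C)\bigr) \;\simeq\; \varinjlim_n M_n(\ca) \Mcomma \]
with connecting maps $\fdec{\kappa_n}{M_n(\ca)}{M_{n+1}(\ca)}$ given by $x \longmapsto x \oplus 0$, and with $\iota_\ca$ identified with the canonical map from the first stage $M_1(\ca) = \ca$ into the limit. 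I would then invoke two facts. The first is that each corner map $\kappa_n$ induces an isomorphism on $K_0$: at the level of the Murray--von Neumann semigroup, $V_0(M_n(\ca))$ and $V_0(\ca)$ are both built from projections in $\bigsqcup_k M_k(\ca)$ modulo $\simMvN$, and $\kappa_n$ merely pads a projection with a zero block, which leaves its Murray--von Neumann class unchanged; applying the Grothendieck functor $\cg$ then gives the claimed isomorphism on $K_0$. The second is that $K_0$ is \emph{continuous}, i.e. it commutes with inductive limits, so $K_0(\ca \otimes \ck) \simeq \varinjlim_n K_0(M_n(\ca))$. A directed colimit of a system all of whose connecting maps are isomorphisms is canonically isomorphic to its initial term, so $K_0(\ca \otimes \ck) \simeq K_0(M_1(\ca)) = K_0(\ca)$, and tracing the identifications shows this isomorphism is exactly $K_0(\iota_\ca)$.

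The hard part will be establishing the continuity of $K_0$ that powers the second fact, since this is the only ingredient not already internal to the definitions recalled in Section~\ref{ssec:opKtheory-technicalbackground}. Concretely, one must show that every projection in a matrix algebra over $\ca \otimes \ck = \varinjlim_n M_n(\ca)$ is $\simMvN$-equivalent to a projection supported in some finite corner $M_n(\ca)$, and that a Murray--von Neumann equivalence between such projections can itself be realised at a finite stage; both follow from the standard perturbation lemmas (an element close to a projection can be rounded to a genuine projection by functional calculus, and sufficiently close projections are unitarily, hence Murray--von Neumann, equivalent), applied to approximants coming from the dense union $\bigcup_n M_n(\ca)$. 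A secondary point to handle carefully is that $\ca \otimes \ck$ is nonunital, so throughout one works with the unitalisation-based $K_0$ of Definition~\ref{def:unital-to-not}; since the corner-padding and approximation arguments are insensitive to adjoining a unit, and $\cg$ and $(-)^+$ are functorial, the pointwise isomorphisms assemble, by the naturality of $\iota_{(-)}$ noted above, into the desired natural isomorphism $K_0 \simeq K_0 \circ \ck$ of functors $\cat{C^*} \longrightarrow \cat{Ab}$.
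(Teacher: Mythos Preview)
Your proof is correct and follows essentially the same route as the paper. In the appendix, the paper states (without proof) two background theorems---matrix stability ($K_0 \simeq K_0 \circ M_n$) and continuity of $K_0$ with respect to direct limits---and then derives stability as an immediate consequence via the decomposition $\ck = \varinjlim_n M_n(\C)$; you have reproduced exactly this argument, supplying in addition an explicit natural transformation via the corner embedding and a sketch of why each ingredient holds.
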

 
Consequently, the operator $K_0$-functor is determined by its restriction to stable \cstar s.

The Murray--von Neumann semigroup, and thus the $K_0$-group,
of a unital \cstar\ $\ca$
can be expressed in a rather simple fashion in terms of projections of its stabilisation~\cite[Exercise 6.6]{rordam}.
We require this definition in the proof of Theorem~\ref{thm:Ktheory} and thus describe it in explicit detail.

\begin{definition}\label{def:uequiv}
Let $\ca$ be a \cstar.
Two projections $p$ and $q$ in a \cstar\ $\ca$ are \emph{unitarily equivalent}, denoted by $p \simu q$, whenever there is a unitary $u \in \ca^+$ such that $p = uqu^*$.
We write $[p]$ for the unitary equivalence class of $p$.
\end{definition}

Given projections $p_1, \ldots, p_k \in \ca \otimes \ck$,
one can find pairwise orthogonal representatives of their unitary equivalence classes,
i.e. there exist projections $q_1, \ldots q_k \in \ca \otimes \ck$ such that $p_i \simu q_i$ ($i \in \{1, \ldots,n\})$ and all the $q_i$ are pairwise orthogonal \cite[Exercise 6.6]{rordam}. 

The Murray--von Neumann semigroup for unital \cstar s admits the following alternative characterisation:
\begin{definition}[The Murray--von Neumann semigroup  for unital $\ca$, alternative definition]\label{def:MvNsemigroup-alternative}
Let $\ca$ be a unital \cstar.
The elements of $V_0(\ca)$ are the unitary equivalence classes of projections in $\ck(\ca)$.
The abelian addition operation is given by orthogonal addition. That is, if $p$ and $p'$ are two  projections in $\ca \otimes \ck$, then
$[p] + [p'] = [q + q']$
where $q$ and $q'$ are orthogonal representatives of $[p]$ and $[p']$, respectively (i.e. $p \simu q$, $p' \simu q'$, and $q \perp q'$).
The equivalence class of the zero projection is a neutral element for this operation, making $V_0(\ca)$ an abelian monoid.
A unital $*$-homomorphism $\fdec{\phi}{\ca}{\cb}$ yields a monoid homomorphism by  $\fdec{V_0(\phi)}{V_0(\ca)}{V_0(\cb)}$ by $[p] \longmapsto [\ck(\phi)(p)]$.
This defines a functor $\fdec{V_0}{\cat{uC^*}}{\cat{AbMon}}$.
\end{definition}

Through this reformulation of the Murray--von Neumann semigroup functor $V_0$, we automatically get a new description of $K_0$. Recall that this is obtained by composition with the Grothendieck group functor, as $K_0 = \cg \circ V_0$.  Then, $K_0(\ca)$ is simply the collection of formal differences $$[p] - [q]$$ of elements of $V_0(\ca)$ with $$[p] - [q] = [p'] - [q']$$ precisely when there exists $[r]$ such that $$[p] + [q'] + [r] = [p'] + [q] + [r]\Mdot$$

Composing the action on morphisms of the Grothendieck group functor after the action of $V_0$ just defined, we find that a unital $*$-homomorphism $\fdec{\phi}{\ca}{\cb}$ between unital \cstar s yields an abelian group homomorphism between the $K_0$ groups of $\ca$ and $\cb$ given by 
$$[p] - [q] \longmapsto [\phi(p)] - [\phi(q)] \Mdot$$

\subsection{Main theorem}

We now prove the main theorem of this section (Theorem~\ref{thm:Ktheory}): that the functors $K_0: \cat{uC^*} \longrightarrow \cat{Ab}$ and $\tk_f \circ \ck : \cat{uC^*} \longrightarrow \cat{Ab}$ are naturally isomorphic.  It follows that the $K_0: \cat{C^*} \longrightarrow \cat{Ab}$ functor can be reconstructed in terms of $\tk_f$, the stabilisation functor, and the unitalisation functor.

Recall that the functor $\tk_f$ is defined on the category of unital \cstar s as:
\begin{align*}\tk_f : \cat{uC^*} \longrightarrow \cat{Ab} \; &= \;  \colim \circ \lift{K} \circ G_f \\
&= \;  \colim \circ \lift{K} \circ \Sigma \circ g_f \Mdot
\end{align*}
We will prove the main theorem in three steps:  
\begin{enumerate}
\item Give a simple presentation of the $\tk_f$ group of a unital \cstar\ $\ca$ in terms of its unitary equivalence classes of projections (Lemma~\ref{lemma:kfpresentation}).
\item
Extend the domain category of $\tk_f$ from $\cat{uC^*}$ to the category $\cat{C^*}$ of (not-necessarily-unital) \cstar s and $*$-homomorphisms via unitalisation in the same way that $K_0$ is extended from unital to all \cstar s, and give a similar presentation of the $\tk_f$ group of a (not-necessarily-unital) \cstar.  This is necessary to make sense of the composition $\tk_f \circ \ck$ as all stable \cstar s are nonunital.
\item  Construct a natural isomorphism between $K_0: \cat{uC^*} \longrightarrow \cat{Ab}$ and $\tk_f \circ \ck: \cat{uC^*} \longrightarrow \cat{Ab}$ (Theorem~\ref{thm:Ktheory}).
\end{enumerate}

\begin{lemma}\label{lemma:kfpresentation}
For a unital \cstar\ $\ca$,
\[\tk_f(\ca) \;=\; \langle \;  [p]_u \;|\; [p]_u = [p_1]_u + [p_2]_u \text{ whenever } p = p_1 + p_2 \; \rangle\]
is the group generated by the unitary equivalence classes of projections in $\ca$ modulo the relations coming from addition of orthogonal projections.
Moreover, for a unital $*$-homomorphism  $\fdec{\phi}{\ca}{\cb}$ between unital \cstar s $\ca$ and $\cb$,
\[\tk_f(\phi)([p]_u) =  [\phi(p)]_u \Mdot\]
\end{lemma}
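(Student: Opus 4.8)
The plan is to compute the generalised colimit defining $\tk_f(\ca)$ directly, using the explicit description of colimits in $\cat{Ab}$ given at the end of Section~\ref{ssec:generalisedcolimit}, and then to match the resulting generators-and-relations presentation against the claimed one. Unwinding the definition, the diagram $G_f(\ca) = \Sigma \circ g_f(\ca)$ is indexed by the finitary unitary subcategory $\cat{S}_f(\ca)$: its objects are the \Gelfand\ spectra $\Sigma(V)$ of the unital, finite-dimensional, commutative sub-\cstar s $V \cong \C^{n}$ of $\ca$, which are finite discrete spaces with one point per minimal projection of $V$. Applying the contravariant topological $K$-functor yields a covariant diagram $\tilde D \colon \cat{S}_f(\ca) \longrightarrow \cat{Ab}$ with $\tilde D(V) = K(\Sigma(V)) \cong \Z^{n}$, freely generated by the classes $[e_i]$ of the minimal projections $e_1, \dots, e_n$ of $V$.

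First I would record the action of $\tilde D$ on the two kinds of generating morphism of $\cat{S}_f(\ca)$, recalling that every morphism factors as an isomorphism followed by an inclusion. For an inclusion $V \hookrightarrow V'$, the \Gelfand\ dual is the restriction-of-characters surjection $\Sigma(V') \to \Sigma(V)$, whose pullback sends $[e_i]$ to $\sum_{g \leq e_i} [g]$, the sum over minimal projections $g$ of $V'$ below $e_i$; since $e_i = \sum_{g \leq e_i} g$ is the orthogonal decomposition of $e_i$ in $V'$, this is exactly the orthogonal-additivity relation. For an isomorphism $\ad \colon V \to uVu^{*}$ induced by a unitary $u \in \ca$, it sends $[e_i]$ to $[u e_i u^{*}]$, i.e. it identifies unitarily equivalent projection classes.

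With these computations the explicit formula presents $\tk_f(\ca)$ as $\bigoplus_V K(\Sigma(V))$ modulo the subgroup generated by the above morphism relations, and I would identify this with the claimed group $P$ by exhibiting mutually inverse homomorphisms. In one direction, a common-refinement argument---passing through the subalgebra $\C p \oplus \C(1-p)$, in which $p$ is a minimal projection---shows that the colimit class of a minimal projection depends only on the projection itself and, via the isomorphism relations, only on its unitary equivalence class; the inclusion $\C p \oplus \C(1-p) \hookrightarrow \C p_1 \oplus \C p_2 \oplus \C(1-p)$ then yields $[p]_u = [p_1]_u + [p_2]_u$ whenever $p = p_1 + p_2$, so that sending the generator $[p]_u$ to this class gives a well-defined surjection $P \to \tk_f(\ca)$. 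For the inverse, I would invoke the universal property of the colimit: define maps $\psi_V \colon K(\Sigma(V)) \to P$ on the free generators by $[e_i] \mapsto [e_i]_u$ and check the cocone condition $\psi_V = \psi_{V'} \circ \tilde D(h)$ for each morphism $h$; the isomorphism case is immediate from $[u e_i u^{*}]_u = [e_i]_u$, and the inclusion case is precisely the relation $[e_i]_u = \sum_{g \leq e_i} [g]_u$ in $P$. The induced map $\tk_f(\ca) \to P$ is inverse to the surjection on generators, giving the isomorphism.

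Finally, for the action on a unital $*$-homomorphism $\phi \colon \ca \to \cb$, I would combine the description of $\colim$ on morphisms from Section~\ref{ssec:generalisedcolimit} with $g_f(\phi) = (V \mapsto \phi(V),\ \eta_V = \phi|_V)$: computing $K$ of the \Gelfand\ dual of $\phi|_V \colon V \to \phi(V)$ shows that a minimal projection class $[e_i]$ is sent to $[\phi(e_i)]$ (with the convention $[0] = 0$, covering the case $e_i \in \ker\phi$), so that on generators $\tk_f(\phi)([p]_u) = [\phi(p)]_u$, as claimed. I expect the main obstacle to be the careful bookkeeping of variance together with the single geometric fact underpinning everything---that $K$ of a quotient map of finite discrete spaces sends a point-class to the sum of the point-classes in its preimage---since it is precisely this that forces the colimit relations to coincide with the orthogonal-additivity relations defining $P$ and thereby makes the cocone $\{\psi_V\}$ well-defined.
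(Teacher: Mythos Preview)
Your proposal is correct and follows essentially the same approach as the paper: identify the finite discrete spectra and their $K$-groups $\Z^n$ generated by atomic projections, compute the effect of the two kinds of $\cat{S}_f(\ca)$-morphisms (inclusions give orthogonal-decomposition relations, unitary isomorphisms give unitary-equivalence relations), and use the subalgebra $\C p + \C(1-p)$ to show the colimit class of a projection is independent of the ambient context. Your treatment is arguably slightly more careful than the paper's in that you explicitly exhibit mutually inverse maps, constructing the inverse via the universal property of the colimit as a cocone $\{\psi_V\}$, whereas the paper argues more informally that the colimit ``can be viewed as'' the claimed quotient; but this is a difference of presentation, not of substance.
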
  
\begin{proof}

We will first compute the action of $\tk_f$ on objects before computing its action on unital $*$-homomorphisms.  Recall Definitions~\ref{def:finitaryunitarysubcategory-CStar} and \ref{def:finitaryunitarysemispectral-CStar}.
The objects of the finitary unitary subcategory $\cat{S}_f(\ca)$ are the unital, finite-dimen\-sional, commutative sub-\cstar s of $\ca$.  The morphisms are given by the restrictions of inner automorphisms.
These morphisms are all of the form $i \circ r$ where $i$ is an inclusion and $r$ is an isomorphism between subalgebras given by conjugation by a unitary of $\ca$.

Under the \Gelfand\ spectrum functor, the image of such an object $V$ is a finite, discrete space $\Sigma(V)$ whose points are in correspondence with the atomic projections of the subalgebra $V$.  The images of the inclusions $\fdec{i}{V}{V'}$ are surjections $\fdec{\Sigma(i)}{\Sigma(V')}{\Sigma(V)}$ with the property that whenever a point $s \in \Sigma(V)$ corresponds to a projection $p$ atomic in $V$, then $p$ is the sum of the atomic projections in $V'$ that correspond to the points of $(\Sigma(i))^{-1}(s)$.  In turn, an isomorphism $r$, arising from conjugation by a unitary $u$, is sent by $\Sigma$ to a bijection that connects points whose corresponding projections are related by conjugation by $u$.  

Under the topological $K$-functor, each object $\Sigma(V)$ of the diagram of $G_f(\ca)$ yields a direct sum of copies of $\Z$, one for each point. That is, one gets a trivial vector bundle of every possible dimension (and formal inverses) over each point.  Taking the colimit of the diagram $K \circ G_f(\ca) = K \circ \Sigma \circ g_f(\ca)$ then yields, as described at the end of Section~\ref{ssec:generalisedcolimit}, a direct sum of the abelian groups $ K \circ \Sigma(V)$ indexed by the objects $V$ of $\cat{S}_f(\ca)$ modulo the relations generated by the morphisms of $\cat{S}_f(\ca)$.  In our case, this is a quotient of the direct sum of copies of $\Z$, one for each pair $(V, p)$ where $V$ is a finite-dimensional, unital commutative sub-\cstar\ of $\ca$ and $p$ is an atomic projection in $V$.

The images under $K \circ \Sigma$ of the inclusions result in identifying the generator of the copy of $\Z$ associated to a pair $(V, p)$ with the sum of generators associated to pairs $\{(V', p_i)\}_{i}$ whenever $V \subset V'$ and $\sum_i p_i = p$.
Every nonzero projection $p \in \ca$ is an atomic projection in the subalgebra $\C p + \C (1 - p)$, which is included in every subalgebra that contains $p$.  As the generators associated to the same projection $p$ atomic in different subalgebras are all identified in the colimit, we see that we may speak of the element of the colimit group $[(p)]$ associated to $p$ without reference to which subalgebra it appears in.  Thus, the abelian group $\tilde{K}_f(\ca)$ can be viewed as a quotient of the free abelian group generated by the elements $[(p)]$.  The isomorphisms in the diagram ensure that the elements associated to unitarily equivalent projections are identified.  The second class of identifications consists of those between elements $[p]_u$ and $\sum_i[p_i]_u$ whenever the $p_i$ (are mutually orthogonal and) sum to $p$.

We now consider the action of $\tk_f = \colim \circ \lift{K} \circ \Sigma \circ g_f$ on a unital $*$-homomorphism  $\fdec{\phi}{\ca}{\cb}$.  By Definition~\ref{def:finitaryunitarysemispectral-CStar}, $g_f(\phi)$ is defined to be $(f, \eta)$ where $\fdec{f}{\cat{S}_f(\ca)}{\cat{S}_f(\cb)}$ is the functor taking an object $V \subset \ca$ to $\phi(V) \subset \cb$ and $\eta$ is the natural transformation whose component at $V$ is the unital $*$-homomorphism $\fdec{\phi|_V}{V}{\phi(V)}$.

Suppose $[p]_u \in \tk_f(\ca)$ with $p$ a projection in $\ca$.  The element  $[p]_u$ of the colimit can be identified with a trivial vector bundle $B_p$ of dimension one over the point corresponding to $p$ in the space associated by $ \Sigma \circ g_f(\ca)$ to the subalgebra $V_p = \C p + \C (1 - p)$ of $\ca$.  The natural transformation $\eta$ of the morphism of diagrams $g_f(\phi)$ includes a component $\fdec{\eta_{V_p} = \phi|_{V_p}}{V_p}{V_{\phi(p)}}$ that maps $p$ to $\phi(p)$ and $1-p$ to $1 - \phi(p)$.

Under the image of the lifting of $\Sigma$ to diagrams,
this component becomes $\fdec{\Sigma(\phi|_{V_p})}{\Sigma(V_{\phi(p)})}{\Sigma(V_p)}$ that maps the point corresponding to $\phi(p)$ to the one corresponding to $p$ (and the point corresponding to $1-\phi(p)$ to the one corresponding to $1-p$).

Then, under the topological $K$-functor, this becomes
$\fdec{K \circ \Sigma(V_{\phi(p)})}{K \circ \Sigma(V_p)}{K \circ \Sigma(V_{\phi(p)})}$, which pulls back vector bundles along the map $\Sigma(\phi_{V_p})$ and the bundle $B_p$ is pulled back to the trivial vector bundle of dimension one over the point corresponding to $\phi(p)$ in the discrete space $\Sigma(V_{\phi(p)})$.
The pulled back bundle is identified with $[\phi(p)]_u$ in the colimit $\tk_f(\ca)$ and we conclude that $\tk_f(\phi)([p]_u) =  [\phi(p)]_u$.
\qed\end{proof}

We now extend the functor $\tk_f$ to all \cstar s via unitalisation, in the same way as $K_0$ is extended.


\begin{definition}
The $\tk_f$ group of a \cstar\ $\ca$ (unital or not) is the subgroup of $\tk_f(\ca^+)$ given by the kernel of $\tk_f(\pi)$, where $\fdec{\pi}{\ca^+}{\C}$ is the projection map in the unitalisation short exact sequence (c.f. Definition~\ref{def:unital-to-not}).
A $*$-homomorphism $\fdec{\phi}{\ca}{\cb}$ yields a homomorphism from $\ker \tk_f(\ca^+ \xlongrightarrow{\pi} \C)$ to $\ker \tk_f(\cb^+ \xlongrightarrow{\pi} \C)$ by restriction of $\tk_f(\phi^+)$ to the kernel of $\tk_f(\ca^+\ \xlongrightarrow{\pi} \C)$.
This defines the \emph{$\tk_f$ functor}, $\fdec{\tk_f}{\cat{C^*}}{\cat{Ab}}$.
\end{definition}

To check that this extended map is well-defined on morphisms, we must show
that for any $*$-homomorphism $\fdec{\phi}{\ca}{\cb}$, the unital $*$-homomorphism $\tk_f(\phi^+): \tk_f(\ca^+) \longrightarrow \tk_f(\cb^+)$ carries $\ker \tk_f(\ca^+ \xlongrightarrow{\pi_\ca} \C)$ into $\ker \tk_f(\cb^+ \xlongrightarrow{\pi_\cb} \C)$.
Functoriality then follows immediately from that of
$\tk_f \circ (-)^+ \colon \cat{C^*} \longrightarrow \cat{uC^*} \longrightarrow \cat{Ab}$.
This is done by noting that the following diagram in $\cat{uC^*}$ commutes:
 \begin{equation*}
   \hbox{
   \xymatrix{{\ca^+}  \ar[r]^-{\pi} \ar[d]_{\phi^+} & {\C} \\
{\cb^+} \ar[ru]_-{\pi} & {}
   }} \end{equation*}
and, therefore, so does its image under $\fdec{\tk_f}{\cat{uC^*}}{\cat{Ab}}$:
 \begin{equation*}
   \hbox{
   \xymatrix{{\tk_f(\ca^+)}  \ar[r]^-{\tk_f(\pi)} \ar[d]_{\tk_f(\phi^+)} & {\Z} \\
{\tk_f(\cb^+)} \ar[ru]_-{\tk_f(\pi)} & {}
   }} \end{equation*}

A particular consequence of the following lemma is that the new functor $\fdec{\tk_f}{\cat{C^*}}{\cat{Ab}}$ is an extension of original $\fdec{\tk_f}{\cat{uC^*}}{\cat{Ab}}$, i.e. they agree on unital \cstar s.
This justifies not distinguishing notationally between them.

\begin{lemma}\label{lemma:kfpresentation-nonunital}
For a (not-necessarily-unital) \cstar\ $\ca$,
\[\tk_f(\ca) \;=\; \langle \;  [p]_u \;|\; [p]_u = [p_1]_u + [p_2]_u \text{ whenever } p = p_1 + p_2 \; \rangle\]
is the group generated by the unitary equivalence classes of projections in $\ca$ modulo the relations coming from addition of orthogonal projections.
Moreover, for a $*$-homomorphism  $\fdec{\phi}{\ca}{\cb}$ between  \cstar s $\ca$ and $\cb$,
\[\tk_f(\phi)([p]_u) =  [\phi(p)]_u \Mdot\]
\end{lemma}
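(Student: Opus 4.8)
The plan is to leverage the unital case (Lemma~\ref{lemma:kfpresentation}) applied to the unitalisation $\ca^+$, and then to extract the claimed presentation as the kernel $\tk_f(\ca) = \ker \tk_f(\pi)$. Write $P(\ca)$ for the abstractly presented group in the statement, i.e. the abelian group on the unitary equivalence classes $[q]_u$ of projections $q \in \ca$ modulo the orthogonality relations $[q]_u = [q_1]_u + [q_2]_u$ for $q = q_1 + q_2$. By Lemma~\ref{lemma:kfpresentation}, $\tk_f(\ca^+)$ has exactly the analogous presentation with $\ca^+$ in place of $\ca$, and $\tk_f(\pi)$ is determined on generators by $[p]_u \mapsto [\pi(p)]_u$. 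Since $\pi(p)$ is a projection in $\C$, hence $0$ or $1$, and $[0]_u = 0$ while $\tk_f(\C) \cong \Z$ is generated by $[1]_u$, the section $\fdec{s}{\C}{\ca^+}$, $\lambda \mapsto \lambda 1$, splits $\pi$; so it suffices to identify $\ker \tk_f(\pi)$ with $P(\ca)$.

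The core of the argument is an explicit isomorphism $\tk_f(\ca^+) \cong P(\ca) \oplus \Z$ under which $\tk_f(\pi)$ becomes projection onto $\Z$. I would define the forward map on generators of $\tk_f(\ca^+)$ by sending $[p]_u \mapsto ([p]_u, 0)$ when $\pi(p) = 0$ (so that $p \in \ca$) and $[p]_u \mapsto (-[1-p]_u, 1)$ when $\pi(p) = 1$ (so that $1 - p \in \ca$, as $p = a + 1$ forces $1 - p = -a$). The reverse map sends $[q]_u \mapsto [q]_u$ and $1 \mapsto [1]_u$, which is well-defined because the orthogonality relations among projections of $\ca$ remain valid in $\tk_f(\ca^+)$. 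Checking these are mutually inverse is routine, using $[1]_u = [p]_u + [1-p]_u$ (the orthogonal decomposition $1 = p + (1-p)$) and $[0]_u = 0$. Tracing the $\Z$-component then shows it agrees with $[\pi(p)]_u$, so $\ker \tk_f(\pi)$ corresponds precisely to $P(\ca) \oplus 0$, with each generator $[q]_u$ ($q \in \ca$) matched to the element $[q]_u \in \tk_f(\ca^+)$; this is the claimed presentation.

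The morphism formula then drops out: for $\fdec{\phi}{\ca}{\cb}$, the map $\tk_f(\phi)$ is by definition the restriction of $\tk_f(\phi^+)$ to $\ker \tk_f(\pi_\ca)$, and Lemma~\ref{lemma:kfpresentation} gives $\tk_f(\phi^+)([p]_u) = [\phi^+(p)]_u$. For a projection $p \in \ca$ one has $\phi^+(p) = \phi(p) \in \cb$, whence $\tk_f(\phi)([p]_u) = [\phi(p)]_u$, which lies in $\ker \tk_f(\pi_\cb)$ as required.

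The main obstacle is establishing that the forward map above is well-defined, i.e. that it respects both the unitary-equivalence identifications and the orthogonality relations defining $\tk_f(\ca^+)$. The unitary part reduces to the observation that $\pi(p) = \pi(p')$ whenever $p \simu p'$ (conjugation by a unitary fixes the scalar part) and that the same unitary conjugates $1 - p$ to $1 - p'$. The delicate case is an orthogonal splitting $p = p_1 + p_2$ in which exactly one summand, say $p_1$, has $\pi(p_1) = 1$: here one must verify $-[1-p]_u = -[1-p_1]_u + [p_2]_u$ in $P(\ca)$. This follows because $p_1 \perp p_2$ makes $p_2$ a subprojection of $1 - p_1$, yielding the orthogonal decomposition $1 - p_1 = (1-p) + p_2$ entirely within $\ca$; the case $\pi(p_1) = \pi(p_2) = 1$ cannot arise, since then $\pi(p) = 2 \notin \{0, 1\}$.
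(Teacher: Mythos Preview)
Your proof is correct and follows the same overall strategy as the paper: apply Lemma~\ref{lemma:kfpresentation} to $\ca^+$, then identify $\tk_f(\ca) = \ker \tk_f(\pi)$ inside $\tk_f(\ca^+)$ using that every projection in $\ca^+$ is either $p$ or $1-p$ with $p \in \ca$.

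The difference is one of rigour rather than method. The paper's proof simply observes that elements of $\tk_f(\ca^+)$ are $\Z$-linear combinations of $\{[p]_u : p \in \ca\} \cup \{[1]_u\}$ and asserts that membership in $\ker \tk_f(\pi)$ is equivalent to the $[1]_u$-coefficient vanishing, concluding that the kernel is the subgroup generated by the $[p]_u$ with $p \in \ca$. This stops short of verifying that the resulting subgroup has exactly the claimed presentation---i.e.\ that the only relations among these generators are the orthogonal-sum relations coming from $\ca$, with nothing extra inherited from $\ca^+$. Your explicit splitting $\tk_f(\ca^+) \cong P(\ca) \oplus \Z$ closes this gap: by checking that the forward map respects the defining relations of $\tk_f(\ca^+)$ (in particular, the ``delicate case'' where an orthogonal decomposition in $\ca^+$ rewrites as an orthogonal decomposition in $\ca$), you establish directly that $P(\ca)$ embeds as a direct summand, and hence that no spurious relations arise. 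What you gain is a cleaner justification of the presentation; what the paper's version buys is brevity.
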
 
\begin{proof}
Let $\ca$ be a (not-necessarily-unital) \cstar.
We need to determine the kernel of $\tk_f(\pi)$ with $\pi$ the canonical projection from $\ca^+$ to $\C$. Note that $\tk_f$ in the previous sentence refers to the functor $\fdec{\tk_f}{\cat{uC^*}}{\cat{Ab}}$ defined for unital \cstar s. Therefore, we can use Lemma~\ref{lemma:kfpresentation} to perform this calculation.

All projections in $\ca^+$ are of the form $p$ or $1 - p$ for $p$ a projection in $\ca$.
From the previous lemma, the colimit group $\tilde{K}_f(\ca^+)$ is generated by elements of the form
$[p]_u$ and $[1-p]_u$ for projections $p \in \ca$.
As
$$[1-p]_u = [1]_u - [p]_u \Mcomma$$
we see that all elements of $\tilde{K}_f(\ca^+)$ can be expressed
as $\Z$-linear combinations of elements of the form $[p]_u$ with $p$ a projection in $\ca$ or $[1]_u$. Such an element is in the kernel of $\tk_f(\pi)$ if and only if the coefficient for $[1]_u$ is $0$.
Hence, $\ker\tk_f(\pi)$ is the subgroup of $\tilde{K}_f(\ca^+)$ generated by the elements $[p]_u$ for $p$ a projection in $\ca$.

The action for a $*$-homomorphism $\phi$ is clear as it is defined as a restriction of $\tk_f(\phi^+)$. 
%
%
%
\qed\end{proof}

\thmKtheory*

\begin{proof}

We are now ready to define the natural isomorphism $\fdec{\eta}{K_0}{\tk_f \circ \ck}$ as as functors $\cat{uC^*}\longrightarrow\cat{Ab}$.
For a \cstar\ $\ca$, the component $\eta_\ca$ of this natural transformation sends $[p] - [q] \in K_0(\ca)$ to $[p]_u - [q]_u \in \tk_f(\ca \otimes \ck)$, i.e. in the kernel of $\tk_f(\fdec{\pi}{(\ca \otimes \ck)^+}{\C})$.  This is well-defined, for if $[p] - [q] = [p'] - [q']$, i.e. (by Definition~\ref{def:MvNsemigroup-alternative}) if
\[[p] + [q'] + [r] = [p'] + [q] + [r] \Mcomma\]
then we can find pairwise orthogonal representatives of all these equivalence classes of projections by the remark after Definition~\ref{def:uequiv}, and show that
\[[p]_u - [q]_u = [p']_u - [q']_u \Mdot\]
Preservation of addition follows by a similar argument.

We define an inverse map to demonstrate bijectivity of $\eta_\ca$. 
A generator $[p]_u$ of $\tk_f(\ca \otimes \ck)$ is sent by $\eta_\ca^{-1}$ to $[p]$.
Since the relations from Lemma~\ref{lemma:kfpresentation-nonunital} (between $[p]_u$ and $\sum_i[p_i]_u$ whenever $p = \sum_i p_i$ and between $[p]_u$ and $[q]_u$ whenever $p$ and $q$ are unitarily equivalent) are also satisfied
by the elements $[p]$ in the $K_0$ group of $\ca$, $\eta_\ca^{-1}$ is a well-defined map.

To demonstrate the naturality of these isomorphisms, we show that the below diagram commutes.  Let $\fdec{\phi}{\ca}{\cb}$ be a unital $*$-homomorphism.

 \begin{equation*}
   \hbox{
   \xymatrix{{K_0(\ca)}  \ar@<-0.5ex>[r]^-{\eta_\ca} \ar[d]_{K_0(\phi)} & {(\tk_f \circ \ck)(\ca)} \ar[d]^{(\tk_f \circ \ck)(\phi)} \\
{K_0(\cb)} \ar@<-0.5ex>[r]_-{\eta_\cb} & {(\tk_f \circ \ck)(\cb)}
   }} \end{equation*}
  
Suppose $[p] - [q]$ is an arbitrary element of $K_0(\ca)$.
\begin{align*}
((\tk_f \circ \ck)(\phi) \circ \eta_\ca)([p] - [q]) &= ((\tk_f \circ \ck)(\phi)([p]_u - [q]_u) \\
&= \tk_f (\ck(\phi)) ([p]_u - [q]_u) \\
&=  [\ck(\phi)(p)]_u - [\ck(\phi)(q)]_u \\
&=   \eta_\cb( [\ck(\phi)(p)] - [\ck(\phi)(q)]) \\
&=   (\eta_\cb \circ K_0(\phi) ) ( [p] - [q]) 
\end{align*}
This calculation can be seen diagramatically:
 \begin{equation*}
   \hbox{
   \xymatrix{{[p] - [q]}  \ar@{|->}[r]^-{\eta_\ca} \ar@{|->}[d]_{K_0(\phi)} & {[p]_u - [q]_u} \ar@{|->}[d]^{(\tk_f \circ \ck)(\phi)} \\
{[\ck(\phi)(p)] - [\ck(\phi)(q)]} \ar@{|->}[r]_-{\eta_\cb} & {[\ck(\phi)(p)]_u - [\ck(\phi)(q)]_u}
   }} \end{equation*}
\qed\end{proof}

We have thus shown that $\fdec{K_0}{\cat{uC^*}}{\cat{Ab}}$ and $\fdec{\tk_f \circ \ck}{\cat{uC^*}}{\cat{Ab}}$ are naturally isomorphic functors.
Consequently, the extension of $\tk_f \circ \ck$ to all \cstar s via unitalisation yields a functor $\tk_f \circ \ck: \cat{C^*} \to \cat{Ab}$ that is naturally isomorphic to $\fdec{K_0}{\cat{C^*}}{\cat{Ab}}$.
Therefore, the complete operator $K_0$-functor is reconstructed solely in terms of topological $K$-theory, the finitary unitary semispectral functor, stabilisation, and unitalisation.

\section{Noncommutative topology}\label{sec:ideals}

A natural step in using extensions to directly obtain noncommutative analogues from basic topological concepts would be to establish the conjecture that extending the topological notion of closed subset leads to its algebraic generalisation: closed, two-sided ideal.  Background information and definitions for this section can be found in Appendix~\ref{app:ideals}.

We now formalise this idea.
Write $\cat{CMSLat}$ for the category of complete meet-semilattices: its objects are complete lattices and its morphisms are complete meet-semilattice homomorphisms, i.e. functions that preserve arbitrary meets.
Let $\fdec{\TT}{\cat{KHaus}}{\cat{CMSLat}}$ be the functor that assigns to a compact Hausdorff space its complete lattice of closed sets ordered by reverse inclusion (with $C_1 \leq C_2$ if and only if $C_1 \supset C_2$) and to a continuous function the complete meet-semilattice homomorphism mapping a closed set to its image under the function.\footnote{Note that a continuous function between compact Hausdorff spaces is closed, hence the direct image map preserves arbitrary meets of closed sets $\bigwedge A_i = \mathsf{cl}(\bigcup A_i)$.} Let $\tilde{\TT}$ be its extension under the unitary semispectral functor from Definition~\ref{def:unitarysemispectral-Cstar}.
Moreover, let $\fdec{\II}{\cat{uC^*}^\op}{\cat{CMSLat}}$ be the contravariant functor from the category of unital \cstar s to the category of complete meet-semilattices which sends an algebra to its complete lattice of closed, two-sided ideals and a unital $*$-homomorphism $\fdec{\phi}{\ca}{\cb}$ to the homomorphism of complete meet-semilattices   $\fdec{\II(\phi)}{\II(\cb)}{\II(\ca)}$ mapping an ideal $I \subset B$ to the ideal $\phi^{-1}(I)$ of $\ca$.
 In the commutative case, there is a correspondence between closed sets and closed ideals via \Gelfand\ duality: $\II|_{\cat{ucC^*}^\op} \simeq \TT \circ \Sigma$. This suggests the following conjecture:

\begin{conjecture}[\cite{deSilva:Ktheory}] \label{conj:idealsCstar}
  $\tilde{\TT} \simeq \II$.
\end{conjecture}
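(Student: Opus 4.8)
The plan is to construct an explicit natural isomorphism $\Theta\colon \II \Rightarrow \tilde\TT$ and to isolate the one genuinely hard half of its invertibility. First I would unwind $\tilde\TT(\ca)$. Since limits in $\cat{CMSLat}$ are computed as in $\cat{Set}$ with meets taken componentwise, $\tilde\TT(\ca) = \llim(\TT \circ G(\ca))$ is a lattice of \emph{compatible families}. Now \Gelfand\ duality identifies $\TT(\Sigma(V))$, the closed subsets of $\Sigma(V)$ under reverse inclusion, with $\II(V)$, the closed ideals of a commutative $V$, via $C \mapsto \{f : f|_C = 0\}$; and under this identification the direct-image map $\TT(\Sigma(\psi))$ attached to a unital $*$-homomorphism $\psi\colon V \to V'$ becomes the preimage map $\psi^{-1}\colon \II(V') \to \II(V)$ (a one-line character computation: $\psi(f)|_{C'}=0 \iff f|_{\Sigma(\psi)(C')}=0$). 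Applying this to the two kinds of morphism in $\cat{S}(\ca)$ shows that $\tilde\TT(\ca)$ is the lattice of families $(J_V)_{V \in \cat{S}(\ca)}$, with each $J_V$ a closed ideal of $V$, subject to \emph{restriction}-compatibility, $J_V = J_{V'} \cap V$ whenever $V \subseteq V'$, and \emph{conjugation}-equivariance, $J_{uVu^*} = u J_V u^*$ for every unitary $u \in \ca$; meets are intersections, computed componentwise.

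Second, I would define $\Theta_\ca\colon \II(\ca) \to \tilde\TT(\ca)$ by $I \mapsto (I \cap V)_{V}$. Well-definedness is immediate: $I \cap V$ is a closed ideal of $V$, restriction-compatibility is $(I \cap V') \cap V = I \cap V$, and conjugation-equivariance is $u(I \cap V)u^* = I \cap uVu^*$, which holds \emph{precisely because $I$ is two-sided} (so $uIu^* = I$). Preservation of arbitrary meets is $\bigl(\bigcap_i I_i\bigr) \cap V = \bigcap_i (I_i \cap V)$, matching the componentwise meets in the limit. Naturality in $\ca$ — that $\Theta_\ca \circ \II(\phi) = \tilde\TT(\phi) \circ \Theta_\cb$ for $\phi\colon \ca \to \cb$ — follows by computing the $V$-component of each side to $\phi^{-1}(I) \cap V$, using the description of the generalised limit on morphisms from Section~\ref{ssec:generalisedcolimit} together with $(\phi|_V)^{-1}(I \cap \phi(V)) = \phi^{-1}(I) \cap V$. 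Injectivity is then straightforward: if $I \cap V = I' \cap V$ for all commutative $V$, then any self-adjoint $a \in I$ lies in $V := C^*(a,1)$, whence $a \in I \cap V = I' \cap V \subseteq I'$; as a closed two-sided ideal is the closed span of its self-adjoint part, $I \subseteq I'$, and by symmetry $I = I'$.

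This leaves surjectivity, which I expect to be the crux and the reason the statement remains a conjecture. Given a compatible family $(J_V)$, injectivity forces the only candidate to be $I :=$ the closed two-sided ideal of $\ca$ generated by $\bigcup_V J_V$; by construction $I$ is automatically closed and two-sided, and $J_V \subseteq I \cap V$ is clear, so the whole problem collapses to the reverse containment $I \cap V \subseteq J_V$ for every commutative $V$. This is a \emph{conservativity} statement: elements assembled by multiplying generators drawn from many mutually non-commuting contexts $W$, on the left and right by arbitrary elements of $\ca$, must not — upon happening to land back inside a single context $V$ — produce anything outside $J_V$. The obstruction is intrinsic to contextuality: the family $(J_V)$ constrains data only \emph{within} each commutative context and across unitary conjugation, and says nothing directly about sums or products of non-commuting elements, which is exactly the regime where the Bell--Kochen--Specker phenomenon lives.

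Concretely, I would reformulate $I \cap V \subseteq J_V$ through hulls: it asserts that every character $\chi \in \Sigma(V)$ vanishing on $J_V$ extends to a state of $\ca$ vanishing on $I$, i.e. to a state of $\ca/I$; equivalently, passing to the primitive ideal spectrum and the identification $\II(\ca) \simeq \mathcal{O}(\Prim\,\ca)$, that $\Prim\,\ca$ carries enough primitive ideals $P$ whose intersections $P \cap W$ are simultaneously consistent with all the closed sets $\mathrm{hull}(J_W)$. Producing such representations — a contextually consistent pure-state extension — is where genuinely new input is needed, and I do not expect it to follow from formal manipulation alone. The contrast with the von Neumann analogue (Theorem~\ref{thm:VNideals}) is instructive: there the lattice of ultraweakly closed two-sided ideals is the rigid complete Boolean algebra of central projections, and a compatible family of clopen hulls pins down a single central projection directly; the Jacobson topology of $\Prim\,\ca$ admits no such rigidity, which is precisely why the $C^*$-algebraic case resists the same argument.
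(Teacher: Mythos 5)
The statement you were given is a conjecture that the paper itself leaves open: Section~\ref{sec:ideals} only reformulates it---using monadicity of $\cat{CMSLat}$ over $\cat{Set}$ to identify the elements of $\tilde{\TT}(\ca)$ with invariant partial ideals, so that the conjecture becomes surjectivity of $I \longmapsto \pi_I$ (Conjecture~\ref{partialidealsc})---and then proves only the von Neumann algebraic analogue. Your proposal is correct in everything it actually establishes and follows essentially the paper's own reduction: the unwinding of the limit into restriction-compatible, conjugation-equivariant families, the canonical map $I \longmapsto (I \cap V)_V$, naturality, and injectivity via recovering $I$ as the closed span of $\bigcup_V (I \cap V)$ (exactly the paper's remark following Conjecture~\ref{partialidealsc}); and you rightly isolate surjectivity as the precise open content, which neither your proposal nor the paper proves.
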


The principal theorem proved in this section is the von Neumann algebraic analogue of this conjecture:

\thmVNideals

Note that the extension here is with respect to the (von Neumann algebraic) semispectral functor of Definition~\ref{def:unitarysemispectral-vN} that assigns to a von Neumann algebra the diagram consisting of its abelian von Neumann subalgebras and restrictions of inner automorphisms.

We begin by recasting Conjecture~\ref{conj:idealsCstar} in purely algebraic terms as a correspondence between what we call \emph{total} and invariant \emph{partial} ideals of \cstar s.
We then formulate this correspondence for von Neumann algebraic ideals, which is equivalent to our principal theorem, and prove it.

\subsection{Partial and total ideals}

To prove Conjecture~\ref{conj:idealsCstar} would essentially be to demonstrate a bijective correspondence between closed, two-sided ideals of a \cstar\ $\ca$ and certain functions $\pi$ that map commutative sub-\cstar s $V$ of a \cstar\ $\ca$ to closed ideals of $V$.
To see this, note that the limit lattice $\tilde{\TT}(\ca) = (\llim \circ \lift{\TT} \circ G)(\ca)$ is the terminal cone over the diagram $(\lift{\TT} \circ G)(\ca)$.  

\[\xymatrix{
& & \ar@/_/[ddddll] \ar@/^/[ddddrr] L \ar@{.>}[dd] & &\\
&&&&\\
& & \ar[ddll]|{\pi \longmapsto \pi(V)} \tilde{\TT}(\ca)  \ar[ddrr]|{\pi \longmapsto \pi(V')} &&\\
&&&&\\
\II( V)  \ar[rrrr]_{\II(\ad|_{V}^{V'})}&&&& \II(V')
 }
\]

Moreover, the category $\cat{CMSLat}$
is monadic over $\cat{Set}$,
as it is the Eilenberg--Moore category of algebras of the powerset monad
 \cite[Examples 20.5(3) \& 20.10(3)]{AdamekHerrlichStrecker:JoyOfCats}.
Consequently, the forgetful functor $\fdec{U}{\cat{CMSLat}}{\cat{Set}}$ creates limits \cite[Proposition 20.12(10)]{AdamekHerrlichStrecker:JoyOfCats}.
This means that the limit of a diagram in $\cat{CMSLat}$
can be obtained by taking its limit in $\cat{Set}$---where it is given as a subset of a Cartesian product defined by equations corresponding to compatibility conditions---and equipping it with the componentwise partial order or componentwise lattice operations.

Hence, the elements of $\tilde{\TT}(\ca)$ are precisely
what we will call  the \emph{invariant partial ideals} of $\ca$: 
choices of elements $\pi(V)$ from each $\II(V)$ subject to the condition of equation \eqref{eq:conditioninvariantideals} below. 
We can thus recast Conjecture~\ref{conj:idealsCstar} (and analogously, Theorem~\ref{thm:VNideals})
in terms of a correspondence between total ideals and invariant partial ideals.

\subsubsection{Partial and total ideals of $C^*$-algebras}
All algebras and subalgebras considered throughout this section are  assumed to be unital.  By a \emph{total ideal} of a \cstar\ $\ca$, we mean a norm closed, two-sided ideal of $\ca$.

\begin{definition} \label{pi}
A \emph{partial ideal} of a  \cstar\ $\ca$ is a map $\pi$ that assigns to each commutative sub-\cstar\ $V$ of $\ca$ a closed ideal of $V$ such that $\pi(V) = \pi(V') \cap V$ whenever $V \subset V'$.
\end{definition}
Note that the last conditions can be rephrased as requiring that for any
inclusion morphism $\incfdec{\iota}{V}{V'}$, we have $\pi(V) = \II(\iota)(\pi({V'}))$, 
i.e. the following diagram commutes.
\begin{equation}\label{diag:approx}
\xymatrix{
V' & & \{*\} \ar[rr]^{* \longmapsto \pi(V')} \ar[ddrr]_{* \longmapsto \pi(V)} & & \II(V') \ar[dd]^{\II(\iota) :: I \longmapsto I \cap V}
\\
&&&&
\\
V \ar@{^{(}->}[uu]^{\iota} & & & & \II(V)
}
\end{equation}


  The concept of partial ideal was introduced by Reyes \cite{Reyes:obstructing} in the more general context of partial \cstar s.
  His definition differs slightly, but it is equivalent in our case:
  a subset $P$ of normal elements of $\ca$ such that $P \cap V$ is a closed ideal of $V$ for all commutative sub-\cstar s $V$ of $\ca$.

Partial ideals exist in abundance: every closed, left (or right) ideal $I$ of $\ca$ gives rise to a partial ideal $\pi_I$ in a natural way by choosing $\pi_{I}(V)$ to be $I \cap V$.

For example, in a matrix algebra $M_n(\C)$, the right ideal $pM_n(\C)$, for $p \in M_n(\C)$ a nontrivial projection, yields a nontrivial partial ideal of $M_n(\C)$ in this way.
  As matrix algebras are simple, it cannot be the case that these nontrivial partial ideals also arise as $\pi_I$ from a total ideal $I$. 
This raises a natural question: 

\begin{question} Which partial ideals of \cstar s arise from total ideals? \end{question}

Some partial ideals do not even arise from left or right ideals: for example, choosing arbitrary nontrivial ideals from every nontrivial commutative sub-\cstar\ of $M_2(\C)$ yields, in nearly all cases,\ nontrivial partial ideals of $M_2(\C)$.
However, a hint towards identifying those partial ideals which arise from total ideals is given by a simple observation.
If $u$ is a unitary of $\ca$, then $uIu^* = I$ for any total ideal $I \subset\ca$.
This imposes a strong condition on the partial ideal $\pi_{I}$ that arises from $I$.

\begin{definition} \label{ipi}
An {\em invariant partial ideal} $\pi$ of a \cstar\ $\ca$ is a partial ideal of $\ca$ such that, for each commutative sub-\cstar\ $V \subset \ca$ and any unitary $u \in \ca$, the conjugation by $u$ of the ideal associated to $V$ is the ideal associated to the conjugation by $u$ of $V$.  That is, 
\[ u\pi(V)u^* = \pi(uVu^*) \]
\end{definition}
Equivalently, if we write $\fdec{\ad}{\ca}{\ca}$ for the inner automorphism given by conjugation by $u$, i.e. $a \longmapsto u a u^*$, the condition above reads 
\[\ad(\pi(V)) = \pi({\ad(V)}) \Mdot\]

Imposing this invariance condition on partial ideals is equivalent to extending the requirement on $\pi$ of Diagram \eqref{diag:approx} from inclusions $\fdec{\iota}{V}{V'}$
to all $*$-homomorphisms $\fdec{\ad|_V^{V'}}{V}{V'}$ arising as a restriction of the domain and codomain of an  inner automorphism.
An invariant partial ideal is precisely a choice of $\pi(V) \in \II(V)$ for each commutative sub-\cstar\ $V$ of $\ca$ such that whenever there is a morphism $\fdec{\ad|_V^{V'}}{V}{V'}$ as above, then
\begin{equation}\label{eq:conditioninvariantideals}
\pi(V) = \II(\ad|_{V}^{V'})(\pi({V'})) = \mathrm{Ad}_{u^*}(\pi(V')) \cap V = u^*\pi(V')u \cap V \Msemicolon
\end{equation}
i.e. the following diagram commutes.
\[
\xymatrix{
V' & & \{*\} \ar[rr]^{* \longmapsto \pi(V')} \ar[ddrr]_{* \longmapsto \pi(V)} & & \II(V') \ar[dd]^{\II(\ad|_{V}^{V'})}
\\
&&&&
\\
V \ar[uu]_{\ad|_{V}^{V'}} & & & & \II(V)
}
\]

%
%

Thus, we arrive at the following conjecture:

\begin{conjecture}[Reformulation of Conjecture~\ref{conj:idealsCstar}] \label{partialidealsc}
A partial ideal of a \cstar\ arises from a total ideal if and only if it is an invariant partial ideal.
Consequently, the map $I \longmapsto \pi_I$ is a bijective correspondence between total ideals and invariant partial ideals.
\end{conjecture}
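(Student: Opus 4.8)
The plan is to treat the biconditional as two class inclusions and then read off bijectivity, with essentially all the difficulty concentrated in the converse direction; the forward implication and the injectivity of $I \longmapsto \pi_I$ (the map of Definition~\ref{pi}) are elementary. That $\pi_I$ is invariant whenever $I$ is a total ideal is the observation preceding Definition~\ref{ipi}: for a commutative $V$, the set $\pi_I(V) = I \cap V$ is norm closed and a closed ideal of $V$; compatibility $\pi_I(V) = \pi_I(V') \cap V$ for $V \subseteq V'$ is immediate; and invariance reads $u(I \cap V)u^{*} = uIu^{*} \cap uVu^{*} = I \cap uVu^{*}$, using $uIu^{*} = I$. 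For injectivity, recall that a closed two-sided ideal is itself a \cstar\ and hence the closed linear span of its self-adjoint elements. Each self-adjoint $a \in I$ lies in the commutative algebra $V_a = C^*(1,a)$, so $a \in I \cap V_a = \pi_I(V_a)$; thus $\pi_I = \pi_J$ forces $a \in \pi_J(V_a) = J \cap V_a \subseteq J$ for every such $a$, giving $I \subseteq J$ and, by symmetry, $I = J$.

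The substance is the converse: every invariant partial ideal $\pi$ must equal some $\pi_I$. I would first record that membership is intrinsic. Restriction along $C^*(1,a) \subseteq V$ shows that a normal element $a$ lies in $\pi(V)$ for some $V \ni a$ precisely when $a \in \pi(C^*(1,a))$. Hence $N := \bigcup_V \pi(V)$ is a well-defined set of normal elements, closed under scalar multiples (since $\lambda 1 \in V$ and $\pi(V)$ is an ideal of $V$) and, by invariance, under conjugation by unitaries (as $uau^{*} \in u\pi(V)u^{*} = \pi(uVu^{*})$). The natural candidate is then $I :=$ the closed two-sided ideal of $\ca$ generated by $N$, and the inclusion $\pi(V) \subseteq I \cap V$ is immediate from $\pi(V) \subseteq N \cap V$.

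The crux, and what I expect to be the genuine obstacle, is the reverse inclusion $I \cap V \subseteq \pi(V)$: one must verify that closing $N$ under products $x a y$ (for $a \in N$, $x,y \in \ca$), sums, and norm limits introduces no element of a commutative $V$ outside $\pi(V)$. Invariance, together with the fact that $\ca$ is the linear span of its unitaries, should control the multiplicative closure, transporting $\pi(V)$-membership along conjugations via relation~\eqref{eq:conditioninvariantideals}. The truly delicate point is \emph{additivity across non-commuting contexts}: for $a \in \pi(V_a)$ and $b \in \pi(V_b)$ with $ab \neq ba$, no single commutative algebra contains both, yet one must still place $a+b$ (when normal) into $\pi(C^*(1,a+b))$. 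One would attempt to recover this by comparing the partial-ideal data on the various commutative subalgebras generated by $a$, $b$, $a+b$, and their unitary conjugates; but it is precisely this gluing that does not obviously close up for general \cstar s, which is why the statement is offered only as a conjecture. By contrast, the von Neumann analogue (Theorem~\ref{thm:VNideals}) succeeds because ultraweakly closed ideals are cut out by central projections, the relevant projection lattices are complete, and every element is an ultraweak limit of linear combinations of projections, so the gluing can be carried out lattice-theoretically on the hyperstonean spectra—a toolkit unavailable when a \cstar\ has few projections.
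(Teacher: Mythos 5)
Your proposal is correct and takes essentially the same route as the paper: since the statement is only a conjecture, the paper likewise establishes just the easy parts---invariance of $\pi_I$ for a total ideal $I$ (the observation preceding Definition~\ref{ipi}) and injectivity of $I \longmapsto \pi_I$, recovering $I$ as the linear span of $\bigcup_V \pi_I(V)$, which is your self-adjoint-element argument in compressed form---while surjectivity onto invariant partial ideals is left open. Your diagnosis of the obstruction (gluing the data $\pi(V)$ across non-commuting contexts) and of why the von Neumann analogue succeeds via central projections and complete projection lattices agrees with the paper's discussion and its proof of Theorem~\ref{main}.
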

Note that the first part of the statement says that the map  $I \longmapsto \pi_I$ is surjective onto the invariant partial ideals.
The second part of the statement follows easily from this, since injectivity of this map is obvious: the left inverse is given by mapping an invariant partial ideal of the form $\pi_I$ to the linear span of $\bigcup_{V}\pi(V)$, which is equal to $I$ itself.

\subsubsection{Partial and total ideals of von Neumann algebras}

A total ideal of a von Neumann algebra is, as in Definition~\ref{idealdef}, an ultraweakly closed, two-sided ideal.
One may define \emph{partial ideal} (resp. \emph{invariant partial ideal}) for a  von Neumann algebra by replacing in Definition~\ref{pi} (resp. Definition~\ref{ipi})  the occurrences of ``commutative sub-\cstar'' with ``commutative sub-von Neumann algebra'' and ``closed ideal'' with ``ultraweakly closed ideal''.
As before, a total ideal $I$ determines an invariant partial ideal $\pi_I$ in the same way, and the map $I \longmapsto \pi_I$ is injective.


Besides its intrinsic interest, establishing the analogue of Conjecture~\ref{partialidealsc} for von Neumann algebras
provides some measure of evidence for the original conjecture's verity,
and it may be the case that its proof can be adapted to show that the original conjecture holds for a large class of---or perhaps all---\cstar s.

\begin{theorem}[Principal theorem of section] \label{main}
A partial ideal of a von Neumann algebra arises from a total ideal if and only if it is an invariant partial ideal.
Consequently, the map $I \longmapsto \pi_I$ is a bijective correspondence between total ideals and invariant partial ideals.
\end{theorem}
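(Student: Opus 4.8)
The plan is to prove the nontrivial (``only if'') direction by producing, from an arbitrary invariant partial ideal $\pi$, a single central projection $z \in \ca$ and showing that the total ideal $I = \ca z$ satisfies $\pi = \pi_I$. I will freely use two structural facts recalled in Appendix~\ref{app:ideals}: that the ultraweakly closed two-sided ideals of a von Neumann algebra $\ca$ are exactly the sets $\ca z$ for $z$ a central projection, and that an ultraweakly closed ideal of a commutative von Neumann algebra $V$ is of the form $V e$ for a unique largest projection $e \in V$. The converse (``if'') direction and injectivity are routine: if $\pi = \pi_I$ then $u\pi(V)u^* = u(I \cap V)u^* = uIu^* \cap uVu^* = I \cap uVu^* = \pi(uVu^*)$ since $I$ is two-sided, so $\pi$ is invariant; and $I$ is recovered from $\pi_I$ as the ultraweakly closed linear span of $\bigcup_V \pi_I(V)$, because every self-adjoint element $a$ of $I$ lies in $I \cap W^*(a) = \pi_I(W^*(a))$.

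Call a projection $p \in \ca$ \emph{active} if $p \in \pi(W^*(p))$, where $W^*(p) = \C p + \C(1-p)$; by the compatibility condition of Definition~\ref{pi} this is equivalent to $p \in \pi(V)$ for any (equivalently, some) commutative $V$ containing $p$. Writing $\pi(V) = V e_V$, compatibility gives, for a projection $q \in V$, that $q$ is active iff $q \le e_V$, so that $e_V = \bigvee\{\, q \in V : q \text{ active} \,\}$ and $\pi$ is entirely determined by its set $P$ of active projections. I would first establish three closure properties of $P$: it is \emph{hereditary} ($q \le p \in P \Rightarrow q \in P$, by placing $p,q$ in a common commutative $V$), \emph{unitarily invariant} ($p \in P \Rightarrow upu^* \in P$, directly from invariance of $\pi$, Definition~\ref{ipi}), and \emph{closed under suprema of commuting families} (all such projections sit below a common $e_V$). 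I then set $z = \bigvee P$. Because conjugation by a unitary is an automorphism of the projection lattice preserving $P$, it fixes $z$; hence $uzu^* = z$ for all unitaries $u$, so $z$ is central and $I = \ca z$ is a genuine total ideal.

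It then remains to prove $\pi = \pi_I$, which by the reductions above amounts to the equivalence $q \in P \iff q \le z$ for every projection $q$. The forward implication is immediate from $z = \bigvee P$. The reverse implication $q \le z \Rightarrow q \in P$ is the crux. Here I would argue as follows: since $c(p) = \bigvee\{\, upu^* : u \text{ unitary} \,\} \le z$ for each $p \in P$ while $p \le c(p)$, one has $z = \bigvee_{p \in P} c(p)$, a supremum of central projections; disjointifying yields orthogonal central projections $z_\lambda \le c(p_\lambda)$ with $\bigvee z_\lambda = z$. For $q \le z$ the pieces $q z_\lambda$ are orthogonal with supremum $q$, so by closure under commuting suprema it suffices to show each $q z_\lambda$ is active. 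Cutting by the central $z_\lambda$ reduces the problem to a single von Neumann algebra $\ca z_\lambda$ in which the active projection $p_\lambda z_\lambda$ has full central support $z_\lambda$, and $q z_\lambda$ is arbitrary; by the comparison theorem $q z_\lambda$ then decomposes into an orthogonal family of projections each Murray--von Neumann subequivalent to $p_\lambda z_\lambda$, reducing everything (again via commuting-supremum closure and heredity) to the single implication: if $r \simMvN r'$ and $r'$ is active, then $r$ is active.

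The main obstacle is precisely this last step, where Murray--von Neumann equivalence must be upgraded to the \emph{unitary} equivalence that the invariance condition can actually exploit, since $\simMvN$-equivalent projections need not be $\simu$-equivalent in general, the discrepancy being governed by their complements. I would resolve this by splitting $\ca z_\lambda$ into its finite and properly infinite central summands and treating each separately: on the finite part the centre-valued dimension forces the complements of equivalent projections to be equivalent, so $r \simMvN r'$ already gives $r \simu r'$; on the properly infinite part the standard halving and absorption arguments again promote equivalence to unitary equivalence. In both cases $r = u r' u^*$ for a unitary $u$, whence unitary invariance of $P$ yields $r \in P$. Assembling these reductions gives $q \le z \Rightarrow q \in P$, completing the identification $\pi = \pi_I$ and hence Theorem~\ref{main}. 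I expect the bookkeeping of this comparison-theoretic step---and checking that the unitaries produced on each central summand paste to a single unitary of $\ca$---to be the only genuinely delicate part of the argument.
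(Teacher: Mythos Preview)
Your overall strategy—encoding $\pi$ by its set $P$ of active projections, showing $z = \bigvee P$ is central by unitary invariance, and then proving $q \le z \Rightarrow q \in P$—is sound and genuinely different from the paper's. The paper instead reformulates in terms of the family $V \mapsto e_V$ and shows directly that $e_W$ is central for every $W \supseteq \cz(\ca)$. Your closure properties of $P$ (hereditary, unitarily invariant, closed under commuting suprema) are correct, as is the reduction to the single implication ``$r \simMvN r'$ with $r'$ active $\Rightarrow$ $r$ active''; and the finite case works exactly as you say.

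The gap is the properly infinite case. Your assertion that ``standard halving and absorption arguments again promote equivalence to unitary equivalence'' is false as stated: in $\cb(\ch)$ with $r = 1$ and $r'$ any proper projection of infinite rank, one has $r \simMvN r'$ but $r \not\simu r'$. The criterion is $r \simu r' \iff (r \simMvN r'$ and $1-r \simMvN 1-r')$, and your decomposition gives no control over the complements. In particular $r_i' \le p_\lambda z_\lambda$ forces $1 - r_i' \ge 1 - p_\lambda z_\lambda$, which may well be finite even when the ambient algebra is properly infinite, whereas $1 - r_i$ may be properly infinite; so there is no reason for the complements to be equivalent. This is not mere bookkeeping—it is the genuine obstruction.

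The paper resolves the analogous difficulty uniformly, without any type decomposition. Its Main Lemma (Lemma~\ref{lemma:main}) directly constructs a maximal \emph{partially orthogonal} family $M$ of unitary conjugates of the active projection $q$, and shows that the remainder $C(q) - \sup M$ is already \emph{unitarily} subequivalent to $q$; the key device is that $\simMvN$ upgrades to $\simu$ for \emph{orthogonal} pairs (Proposition~\ref{prop:mvNimpliesunitary}), deployed inside a Zorn-plus-comparison argument. Since all members of $M$, and one further conjugate covering the remainder, are active, commuting-sup closure and heredity finish the job. Your approach can be repaired by importing this lemma (applied to $p_\lambda z_\lambda$), but then the type split becomes superfluous and the argument essentially converges to the paper's.
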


Total ideals of a von Neumann algebra $\ca$ are in bijective correspondence with central projections $z$ of $\ca$: every total ideal $I$ is of the form $z\ca$ for a unique $z$ \cite{AlfsenShultz1} (Theorem~\ref{idealcentral}).  This allows us to rephrase the theorem in terms of projections, which are vastly more convenient to work with.

\begin{definition}
A \emph{consistent family of projections} of a von Neumann algebra $\ca$ is a map $\Pf$ that assigns to each commutative sub-von Neumann algebra $V$ of $\ca$ a projection in $V$ such that:
\begin{enumerate}
  \item\label{item:dasein}
  for any $V$ and $V'$ such that $V \subset V'$, $\P{V}$ is the largest projection in $V$ which is less than or equal to $\P{V'}$, i.e.
  \[\P{V} = \sup\setdef{q \text{ is a projection in $V$}}{ q \leq \P{V'}} \Mdot\]
\end{enumerate}
An \emph{invariant family of projections} is such a map further satisfying
\begin{enumerate}\setcounter{enumi}{1}
\item
  for any unitary element $u \in \ca$, $\P{uVu^*}$ = $u\P{V}u^*$.
\end{enumerate}
\end{definition}

The correspondence between total ideals and central projections yields correspondences between partial ideals (resp. invariant partial ideals) and consistent (resp. invariant) families of projections.  Therefore, we shall establish Theorem~\ref{main} by proving the equivalent statement below.
Just as was the case for ideals, any projection $p$ determines a consistent family of projections $\Ppf$ defined by choosing $\Pp{V}$ to be the largest projection $p$ in $V$ which is less than or equal to $p$.
For a central projection $z$, $\Pzf$ turns out to be an invariant family.
In the opposite direction, any consistent family of projections $\Pf$ gives a central projection  $\P{\cz(\ca})$ where $\cz(\ca)$ is the centre of $\ca$.

\begin{theorem}[Principal theorem of section, reformulated] \label{main-reformulated}
A consistent family of projections of a von Neumann algebra arises from a central projection if and only if it is an invariant family of projections.
Consequently, the maps $z \longmapsto \Pzf$ and $\Pf \longmapsto \P{\cz(\ca)}$ define a bijective correspondence between central projections and invariant families of projections.
\end{theorem}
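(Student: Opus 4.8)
The plan is to show that the two assignments $z \longmapsto \Pzf$ and $\Pf \longmapsto \P{\cz(\ca)}$ are well defined and mutually inverse. I would first dispatch the easy direction: that a central projection $z$ gives an invariant consistent family $\Pzf$. Consistency is routine—if $V \subseteq V'$, then a projection $q$ of $V$ satisfies $q \le z$ if and only if $q$, regarded as a projection of $V'$, satisfies $q \le \Pz{V'}$, so the two sets of projections whose suprema define $\Pz{V}$ coincide. Invariance is the single place where centrality is used: the projections of $uVu^*$ are exactly the $uqu^*$ with $q$ a projection of $V$, and $uqu^* \le z \Leftrightarrow q \le u^*zu = z$ since $u^*zu = z$ for central $z$; conjugation therefore carries the supremum defining $\Pz{V}$ to the one defining $\Pz{uVu^*}$, giving $\Pz{uVu^*} = u\,\Pz{V}\,u^*$.

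For the converse, given an invariant family $\Pf$, set $z := \P{\cz(\ca)}$; this is a projection in the abelian algebra $\cz(\ca)$, hence a central projection, so both composites make sense and it remains only to prove $\Pf = \Pzf$. Writing $A := (V \cup \cz(\ca))''$, which is abelian since $\cz(\ca)$ commutes with the abelian $V$, and noting that consistency makes $\Pf$ monotone along inclusions, I get $z = \P{\cz(\ca)} \le \P{A}$; consistency along $V \subseteq A$ then gives $\P{V} = \sup\{\, q \in \mathrm{Proj}(V) : q \le \P{A} \,\}$, and since $\Pz{V} \in \mathrm{Proj}(V)$ with $\Pz{V} \le z \le \P{A}$ this yields the lower bound $\Pz{V} \le \P{V}$. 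The theorem thus reduces to the upper bound
\[ \P{V} \le z \qquad \text{for every commutative subalgebra } V, \]
because $\P{V} \in \mathrm{Proj}(V)$ together with $\P{V} \le z$ forces $\P{V} \le \Pz{V}$, and the two inequalities give equality.

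The upper bound is the main obstacle and is where invariance is used in earnest; my strategy is a tiling/exchange argument. First, for any sufficiently small nonzero subprojection $p' \le \P{V}$ (small enough that $1 - p' \not\le \P{V}$), consistency along $\{p',1-p'\}'' \subseteq V$ gives $\P{\{p',1-p'\}''} = p'$, and invariance propagates this to $\P{\{q,1-q\}''} = q$ for every unitary conjugate $q = up'u^*$. The upper bound is equivalent to the part $\P{V} \wedge (1-z)$ of $\P{V}$ lying outside $z$ being zero. Supposing it nonzero, its central carrier $w$ is a nonzero central projection orthogonal to $z$; inside the corner $w\ca$ this projection has full central support, so by the comparison theory of projections (generalised comparison via central carriers, together with the halving lemma) one tiles $w$ by a family of pairwise orthogonal projections $p_i$, each a unitary conjugate of a subprojection of $\P{V}$ and hence satisfying $\P{\{p_i,1-p_i\}''} = p_i$. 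Feeding this family into consistency forces $\P{B} \ge w$ for the abelian algebra $B := \{p_i : i\}''$; enlarging to $B' := (B \cup \cz(\ca))''$ and applying consistency along $\cz(\ca) \subseteq B'$ exhibits $w$ as a central projection below $\P{B'}$, whence $z = \P{\cz(\ca)} \ge w$, contradicting $0 \ne w \perp z$. The cleanest instance is a factor, where $\cz(\ca) = \C 1$ forces $z \in \{0,1\}$: the case $z = 1$ is immediate from the lower bound, and the case $z = 0$ is exactly the tiling argument, which forces $\P{B} = 1$ for some abelian $B$ and contradicts $\P{\C 1} = 0$. The general case amounts to running this comparison argument relative to central supports (equivalently, over the direct integral into factors), and this is the genuinely technical step.

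Finally, for the bijection: starting from a central $z$, the composite returns $\Pz{\cz(\ca)}$, the largest projection of $\cz(\ca)$ below $z$, which is $z$ itself since $z \in \cz(\ca)$; starting from an invariant family $\Pf$, the identity $\Pf = \Pzf$ with $z = \P{\cz(\ca)}$ just established shows the other composite is the identity. Hence the two maps are mutually inverse, proving Theorem~\ref{main-reformulated}.
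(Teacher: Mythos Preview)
Your scaffold---the easy direction, the lower bound $\Pz{V}\le\P{V}$, and the reduction of the hard direction to the upper bound $\P{V}\le z$---is correct and matches the paper's logic. The gap is in the tiling step. As written, ``consistency along $\{p',1-p'\}''\subseteq V$'' forces $p'\in V$, but $V$ may have almost no subprojections of $\P{V}$: when $V=\{q,1-q\}''$ the only candidates are $0$ and $q=\P{V}$ itself, and in a $\mathrm{II}_1$ factor with $q$ of irrational trace you cannot exhaust $1$ by pairwise orthogonal unitary conjugates of $q$ (the halving lemma is a properly-infinite tool and does not help here). This particular point is fixable---for any $p'\le q$ in $\ca$ one still obtains $\P{\{p',1-p'\}''}=p'$ by passing through the abelian intermediate algebra $\{p',q\}''$ and applying consistency twice---but even then, the assertion that $w$ can be exhausted by an orthogonal family of projections each $\lequ q$ is a nontrivial comparison-theoretic fact that you only gesture at, and the direct-integral reduction to factors is not carried out.

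The paper sidesteps exact tiling altogether. Its Lemma~\ref{lemma:main} takes a Zorn-maximal \emph{partially orthogonal} subset $M$ of the unitary orbit of $q$ itself (not of subprojections) and uses the comparison theorem to show that the remainder $\srem=C(q)-\sup M$ satisfies $\srem\lequ q$, hence is dominated by a single further conjugate $uqu^*$. Since $\srem\le uqu^*$ forces $uqu^*$ to commute with $s=C(q)-\srem$, one can form the abelian algebra $\V{s,uqu^*}$ and conclude $\P{\V{s,uqu^*}}\ge s\vee \srem=C(q)$, whence (by invariance) $q\ge C(q)$ and so $q$ is central. No exact covering, no type decomposition, and no direct integral is needed.
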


\subsection{Technical preliminaries}

\subsubsection{Little lemmata}
In proving our main result, we shall make use of some simple properties of consistent families of projections which we record here as lemmata for clarity.

\begin{lemma}\label{lemma:approxprops}
  Let $\ca$ be a von Neumann algebra and $\Pf$ be a consistent family of projections in $\ca$.  Suppose $V$ and $V'$ are commutative sub-von Neumann algebras of $\ca$ with $V \subset V'$. Then:
  \begin{enumerate}[(i)]
    \item\label{item:monotone} $\P{V} \leq \P{V'}$;
    \item\label{item:approxbest} if $p \in V$ and $p \leq \P{V'}$, then $p \leq \P{V}$;
    \item\label{item:approxbesteq} in particular, if $\P{V'} \in V$, then $\P{V'} = \P{V}$.
  \end{enumerate}
\end{lemma}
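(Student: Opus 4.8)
The plan is to derive all three parts directly from the defining ``dasein'' property~\ref{item:dasein} of a consistent family, which characterises $\P{V}$ as the supremum of the set
\[
S \;=\; \setdef{q}{q \text{ a projection in } V,\ q \leq \P{V'}} \Mcomma
\]
supplemented by a single standard structural fact about von Neumann algebras. So first I would record that fact: for a von Neumann algebra $V$ the projections form a complete lattice, and the supremum $\bigvee q_i$ of a family of projections---being the projection onto $\overline{\sum \operatorname{ran}(q_i)}$---lies in $V$ and coincides with the supremum computed in any larger von Neumann algebra (in particular in $V'$ and in $\cb(\ch)$). This guarantees that $\P{V} = \sup S$ is an unambiguous projection of $V$. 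This is the only place where the von Neumann (rather than merely $C^*$) structure is needed, and it is the sole technical point worth checking; I do not expect it to present a genuine obstacle.

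For part~\ref{item:monotone} I would argue purely order-theoretically. Every element of $S$ is by construction dominated by the projection $\P{V'}$, so $\P{V'}$ is an upper bound for $S$ in the projection lattice. Since $\P{V} = \sup S$ is the \emph{least} upper bound, it follows that $\P{V} \leq \P{V'}$. For part~\ref{item:approxbest}, if $p$ is a projection in $V$ with $p \leq \P{V'}$, then $p$ is by definition a member of $S$, whence $p \leq \sup S = \P{V}$; this is immediate once membership in $S$ is observed.

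Finally, part~\ref{item:approxbesteq} is the special case of~\ref{item:approxbest} obtained by taking $p = \P{V'}$. The hypothesis $\P{V'} \in V$ makes $\P{V'}$ a projection of $V$, and it is trivially dominated by itself, so~\ref{item:approxbest} yields $\P{V'} \leq \P{V}$. Combining this with the reverse inequality $\P{V} \leq \P{V'}$ from~\ref{item:monotone} gives $\P{V} = \P{V'}$, completing the proof. The overall difficulty here is low: the content of the lemma is essentially a repackaging of the universal property of the supremum in the defining condition, and the argument is a short chain of order inequalities.
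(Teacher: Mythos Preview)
Your proposal is correct and follows essentially the same approach as the paper: the paper's proof simply notes that \eqref{item:monotone} and \eqref{item:approxbest} are immediate from the defining property that $\P{V}$ is the largest projection in $V$ below $\P{V'}$, and that \eqref{item:approxbesteq} is a particular case of \eqref{item:approxbest}. Your write-up is just a slightly more explicit unpacking of the same supremum argument.
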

\begin{proof}
  Properties \eqref{item:monotone} and \eqref{item:approxbest} are simple consequences of the requirement in the definition of consistent family of projections that $\P{V}$ is the largest projection in $V$ smaller than $\P{V'}$. Property \eqref{item:approxbesteq} is a particular case of \eqref{item:approxbest}.
\qed\end{proof}

Given a commutative subset $X$ of a von Neumann algebra $\ca$,
denote by $\V{X}$ the commutative sub-von Neumann algebra of $\ca$ generated by $X$ and the centre $\cz(\ca)$,
i.e. $\V{X} = (X \cup \cz(\ca))''$. Note that $\V{\emptyset}=\cz(\ca)$.
Given a nonempty finite commutative set of projections $\{p_1,\ldots,p_n\}$, we write $\V{p_1, \ldots, p_n}$ for $\V{\{p_1, \ldots, p_n\}}$.

\begin{lemma}\label{lemma:approxsupVm}
 Let $\ca$ be a von Neumann algebra and $\Pf$ a consistent family of projections in $\ca$.
 Let $M$ be a commutative set of projections in $\ca$ and write $s$ for the supremum of the projections in $M$.
 If $\P{\V{m}} \geq m$ for all $m \in M$, then $\P{\V{s}} \geq s$.
\end{lemma}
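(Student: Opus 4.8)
The plan is to introduce a single commutative subalgebra that carries \emph{all} the hypotheses at once, namely $W := \V{M} = (M \cup \cz(\ca))''$, and to use it as a bridge between the hypotheses (which concern the algebras $\V{m}$) and the conclusion (which concerns $\V{s}$). The crucial point is that $\V{m}$ and $\V{s}$ are not in general comparable under inclusion---$m \leq s$ does not place $m$ inside $\V{s} = (\{s\} \cup \cz(\ca))''$---so a direct comparison of the two is unavailable. Instead I would pass upward to $W$, take the supremum there, and then descend.

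First I would record that $W$ is a commutative sub-von Neumann algebra of $\ca$ containing $M$, its supremum $s$ (since a von Neumann algebra is closed under suprema of its projections, and such suprema agree across von Neumann subalgebras), and the centre $\cz(\ca)$. Consequently $\V{m} \subseteq W$ for every $m \in M$, and also $\V{s} \subseteq W$.

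Next, applying monotonicity (Lemma~\ref{lemma:approxprops}\eqref{item:monotone}) to the inclusion $\V{m} \subseteq W$ gives $\P{W} \geq \P{\V{m}} \geq m$, where the second inequality is precisely the hypothesis. Thus the single projection $\P{W}$ is an upper bound for every $m \in M$, and since $s$ is the least upper bound of $M$ in the projection lattice, it follows that $\P{W} \geq s$. Finally I would descend from $W$ to $\V{s}$: since $s \in \V{s}$, $\V{s} \subseteq W$, and $s \leq \P{W}$, Lemma~\ref{lemma:approxprops}\eqref{item:approxbest} yields $s \leq \P{\V{s}}$, which is exactly the claim.

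The step requiring the most care---and the only real obstacle---is recognising that the correct intermediate object is $\V{M}$ rather than a pairwise algebra such as $\V{m,s}$. Comparing $\V{s}$ with $\V{m,s}$ only ever propagates the bound $m$ (not $s$) down to $\V{s}$ and so leads nowhere, whereas collecting \emph{all} of $M$ into one algebra allows the supremum to be absorbed into the single projection $\P{W}$ before one descends. The remaining verifications---that $\V{m}, \V{s} \subseteq W$ and that $s \in W$---are routine facts about von Neumann algebras and the lattice of projections.
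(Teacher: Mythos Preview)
Your proof is correct and is essentially identical to the paper's own argument: both introduce $W=\V{M}$, use monotonicity together with the hypothesis to get $\P{\V{M}}\geq m$ for every $m\in M$, hence $\P{\V{M}}\geq s$, and then descend to $\V{s}$ via Lemma~\ref{lemma:approxprops}\eqref{item:approxbest} using $s\in\V{s}\subseteq\V{M}$.
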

\begin{proof}
    For all $m \in M$, since $\V{m} \subseteq \V{M}$, we have
  \[\P{\V{M}} \geq \P{\V{m}} \geq m\]
  by Lemma~\ref{lemma:approxprops}-\eqref{item:monotone} and the assumption that $\P{\V{m}} \geq m$.
  Hence, $\P{\V{M}}$ is at least the supremum of the projections in $M$, i.e. $\P{\V{M}} \geq s$.
  Now, note that $s \in \V{M}$ as it is a supremum of projections in $\V{M}$, hence $\V{s} \subset \V{M}$.
  From this and $s \in \V{s}$, we conclude by Lemma~\ref{lemma:approxprops}-\eqref{item:approxbest} that $s \leq \P{\V{s}}$.
\qed\end{proof}
 

\subsubsection{Partial orthogonality}
We introduce the following notion, which will prove useful in establishing our main result.
Note that, given a projection $p$, we write $p^\perp$ for the projection $1 - p$.

\begin{definition}
Two projections $p$ and $q$ in a von Neuman algebra are {\em partially orthogonal} whenever there exists a central projection $z$ such that $zp$ and $zq$ are equal while $z^\perp p$ and $z^\perp q$ are orthogonal.
\end{definition}

A set of projections is said to be {\em partially orthogonal} whenever any pair of projections in the set is partially orthogonal. This can include pairs of repeated elements, as any projection is trivially partially orthogonal to itself.

Note that partially orthogonal projections necessarily commute.
Moreover, if $p_1$ and $p_2$ are partially orthogonal, then so is the pair $zp_1$ and $zp_2$ for any central projection $z$.
We will require in the sequel the following simple lemma:

%

\begin{lemma}\label{polemma}
In a von Neumann algebra, let $p_1$ and $p_2$ be projections and $z$ be a central projection such that
$zp_1$ and $zp_2$ are partially orthogonal and $z^\perp p_1$ and $z^\perp p_2$ are partially orthogonal. 
Then $p_1$ and $p_2$ are partially orthogonal.
\end{lemma}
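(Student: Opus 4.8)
The plan is to build the single central projection witnessing the partial orthogonality of $p_1$ and $p_2$ by gluing together, over $z$ and over $z^\perp$, the two witnesses supplied by the hypotheses. Unfolding the definition, since $zp_1$ and $zp_2$ are partially orthogonal there is a central projection $z_1$ with $z_1 z p_1 = z_1 z p_2$ and with $z_1^\perp z p_1$ orthogonal to $z_1^\perp z p_2$; likewise there is a central projection $z_2$ with $z_2 z^\perp p_1 = z_2 z^\perp p_2$ and $z_2^\perp z^\perp p_1$ orthogonal to $z_2^\perp z^\perp p_2$. The natural candidate witness is
\[ w = z z_1 + z^\perp z_2 \Mcomma\]
which I would first check is a central projection: $z, z_1, z_2$ are all central and hence mutually commute, and $z z_1$ and $z^\perp z_2$ are orthogonal (as $z \perp z^\perp$), so $w$ is a sum of two orthogonal central projections. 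Using $1 = z + z^\perp$, its complement is $w^\perp = z z_1^\perp + z^\perp z_2^\perp$.

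Before verifying the two defining conditions, I would record the preliminary fact that $p_1$ and $p_2$ commute. This follows because partially orthogonal projections commute, so $zp_1$ commutes with $zp_2$ and $z^\perp p_1$ with $z^\perp p_2$; multiplying out and using centrality of $z$ gives $z p_1 p_2 = z p_2 p_1$ and $z^\perp p_1 p_2 = z^\perp p_2 p_1$, and adding these (since $z + z^\perp = 1$) yields $p_1 p_2 = p_2 p_1$. This matters because $w^\perp p_1$ and $w^\perp p_2$ are then genuine commuting projections, so their orthogonality reduces to the single product $(w^\perp p_1)(w^\perp p_2) = w^\perp p_1 p_2$ being zero.

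Next I would verify the equality $w p_1 = w p_2$. Pulling the central factors out, $w p_1 = z_1(z p_1) + z_2(z^\perp p_1)$, and substituting $z_1(z p_1) = z_1(z p_2)$ and $z_2(z^\perp p_1) = z_2(z^\perp p_2)$ from the hypotheses gives exactly $w p_2$. For the orthogonality $w^\perp p_1 \perp w^\perp p_2$, I would expand $w^\perp p_1 p_2 = z z_1^\perp p_1 p_2 + z^\perp z_2^\perp p_1 p_2$ and show each term vanishes: the orthogonality $z_1^\perp z p_1 \perp z_1^\perp z p_2$ says their product, which collapses by centrality and idempotency to $z z_1^\perp p_1 p_2$, is zero, and symmetrically $z_2^\perp z^\perp p_1 \perp z_2^\perp z^\perp p_2$ forces $z^\perp z_2^\perp p_1 p_2 = 0$. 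Hence $w^\perp p_1 p_2 = 0$, completing the check.

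I do not expect a genuine obstacle here: the whole argument is carried by the centrality of $z, z_1, z_2$, which lets every product be rearranged freely so that the two orthogonality hypotheses translate directly into the vanishing of the two pieces of $w^\perp p_1 p_2$. The only point demanding a moment's care is the preliminary observation that $p_1$ and $p_2$ commute, since the definition of partial orthogonality is phrased via products of projections, and this commutativity is what licenses treating $w^\perp p_1$ and $w^\perp p_2$ as commuting projections whose orthogonality is certified by a single product being zero.
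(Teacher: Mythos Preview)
Your proof is correct and follows essentially the same approach as the paper: both construct the witness $w = zz_1 + z^\perp z_2$ (the paper writes it as $yz + xz^\perp$) by gluing the two given witnesses over $z$ and $z^\perp$, then verify equality and orthogonality piecewise. Your preliminary paragraph establishing that $p_1$ and $p_2$ commute is not actually needed---since $w^\perp$ is central, $w^\perp p_1$ and $w^\perp p_2$ are automatically projections, and their orthogonality reduces to $(w^\perp p_1)(w^\perp p_2) = w^\perp p_1 p_2 = 0$ regardless of whether $p_1 p_2 = p_2 p_1$---but it does no harm.
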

\begin{proof}
As $zp_1$ and $zp_2$ are partially orthogonal, there exists a central projection $y$ such that
\[ yzp_1 = yzp_2 \;\; \text{ and } \;\;         y^\perp zp_1 \perp y^\perp zp_2 \Mdot\]
Similarly, as $z^\perp p_1$ and $z^\perp p_2$ are partially orthogonal, there exists a central projection $x$ such that
\[ xz^\perp p_1 = xz^\perp p_2 \;\; \text{ and } \;\; x^\perp z^\perp p_1 \perp x^\perp z^\perp p_2 \Mdot\]
Summing both statements above, we conclude that
\[(yz+xz^\perp )p_1 = (yz+xz^\perp )p_2 \;\; \text{ and } \;\; (y^\perp z+x^\perp z^\perp ) p_1 \perp (y^\perp z + x^\perp z^\perp )p_2 \Mcomma\]
where $yz+xz^\perp $ is a central projection and $(yz+xz^\perp )^\perp  = y^\perp z+x^\perp z^\perp $.
So, $p_1$ and $p_2$ are partially orthogonal.
\qed\end{proof}

\subsubsection{Main lemma}
When comparing projections, we write $\leq$ to denote the usual order on projections, $\leqMvN$ for the order up to Murray--von Neumann equivalence, and $\lequ$ for the order up to unitary equivalence.

The following lemma is one of the main steps of the proof.
The idea is to start with a projection $q$ in a von Neumann algebra and to
cover, as much as possible, its central carrier $C(q)$ by a commutative subset of the unitary orbit of $q$.
The lemma states that, in order to cover $C(q)$ with projections from the unitary orbit of $q$,
it suffices to take a commutative subset, $M$, and (at most) one other projection, $uqu^*$,
which is above the remainder $C(q) - \sup M$.
In other words, the remainder from what can be covered by a commutative set
 $M$ is smaller than or equal to $q$ up to unitary equivalence.

\begin{lemma}\label{lemma:main}
  Let $q$ be a projection in a von Neumann algebra $\ca$.
  Then there exists a set $M$ of projections in $\ca$ such that:
\begin{enumerate}[(i)]
\item\label{mainlemmaitem:first}
$q \in M$;
\item
$M$ is a subset of the unitary orbit of $q$;
\item\label{mainlemmaitem:penultimate}
$M$ is a commutative set;
\item
the supremum $s$ of $M$ satisfies $\srem \lequ q$ where $\srem = C(s)-s = C(q) - s$.
\end{enumerate}
\end{lemma}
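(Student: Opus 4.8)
The plan is to realise $M$ as a maximal \emph{partially orthogonal} family of unitary conjugates of $q$, and then to use maximality to force the uncovered part $\srem$ of the central carrier to embed unitarily into $q$. Partial orthogonality (rather than plain orthogonality) is the right relation to impose: it permits two conjugates to coincide on one central summand while being orthogonal on the complementary one, which is exactly what is needed to keep the family commutative while still covering as much of $C(q)$ as possible.

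First I would apply Zorn's lemma to the collection of all subsets of the unitary orbit $\{\,uqu^{*} : u \text{ unitary in } \ca\,\}$ that contain $q$ and are partially orthogonal (every pair partially orthogonal). Since partial orthogonality is a pairwise condition, the union of a chain is again partially orthogonal, so a maximal such family $M$ exists; as partially orthogonal projections commute, $M$ is a commutative set. This yields properties \eqref{mainlemmaitem:first}--\eqref{mainlemmaitem:penultimate} at once. Setting $s = \sup M$, every conjugate satisfies $C(uqu^{*}) = uC(q)u^{*} = C(q)$ (central projections are conjugation-invariant) and $uqu^{*} \leq C(q)$; hence $q \leq s \leq C(q)$, so $C(s) = C(q)$ and $\srem = C(s) - s = C(q) - s$, as required. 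It remains to prove $\srem \lequ q$.

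The core step extracts from maximality the following: for every nonzero central projection $z$ there is \emph{no} projection $r \leq z\srem$ with $r \simu zq$ inside $z\ca$. Indeed, given such an $r$, choose a unitary $u_{0}$ of $z\ca$ with $r = u_{0}(zq)u_{0}^{*}$ and put $q' = r + z^\perp q$; then $u = u_{0} + z^\perp$ is a unitary of $\ca$ with $uqu^{*} = q'$, so $q'$ lies in the orbit. I would then check that $q'$ is partially orthogonal to each $m \in M$: on $z$ we have $zq' = r \leq z\srem \perp zs \geq zm$, so $zq' \perp zm$; on $z^\perp$ we have $z^\perp q' = z^\perp q$, which is partially orthogonal to $z^\perp m$ because $q$ and $m$ are partially orthogonal in $M$; Lemma~\ref{polemma} then assembles these into partial orthogonality of $q'$ and $m$. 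Since $zq' = r \neq 0$ is orthogonal to $zs$, we get $q' \not\leq s$, hence $q' \notin M$, contradicting maximality. Applying the comparison theorem for projections (generalised comparability) to $\srem$ and $q$ gives a central projection $z_{0}$ with $z_{0}\srem \leqMvN z_{0}q$ and $z_{0}^\perp q \leqMvN z_{0}^\perp \srem$; the whole point is to conclude $z_{0}^\perp = 0$, for on $z_{0}^\perp$ a copy of $q$ sits, up to Murray--von Neumann equivalence, inside $\srem$, and upgrading this to an actual unitary conjugate $r \leq z_{0}^\perp\srem$ with $r \simu z_{0}^\perp q$ would contradict the displayed consequence of maximality (with $z = z_{0}^\perp$). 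Thus $\srem \leqMvN q$, and together with the matching complement condition this delivers $\srem \lequ q$.

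The hard part will be exactly this upgrade from Murray--von Neumann sub-equivalence to genuine membership in the unitary orbit: producing $r \leq z_{0}^\perp\srem$ that is unitarily---not merely Murray--von Neumann---equivalent to $z_{0}^\perp q$, and dually verifying the complement inequality $q^\perp \leqMvN \srem^\perp$ needed to pass from $\leqMvN$ to $\lequ$. I would handle this by decomposing $z_{0}^\perp\ca$ over its centre $\cz(z_{0}^\perp\ca)$ into its finite and properly infinite parts: on the finite part the centre-valued trace forces complements of Murray--von Neumann equivalent projections to be equivalent, so $\leqMvN$ upgrades automatically to $\lequ$; on the properly infinite part the relevant complements are themselves properly infinite with full central carrier and therefore absorb, again yielding the equivalence. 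Reassembling the two central summands completes the contradiction, and hence the proof.
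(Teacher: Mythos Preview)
Your strategy is the paper's strategy: take a maximal partially orthogonal family $M$ in the unitary orbit of $q$ containing $q$, apply the comparison theorem to $\srem$ and $q$, and use maximality to kill the bad central piece. The structure is right and properties \eqref{mainlemmaitem:first}--\eqref{mainlemmaitem:penultimate} and $C(s)=C(q)$ are handled exactly as in the paper.

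The substantive difference is in the step you flag as the ``hard part''. You propose to upgrade the Murray--von Neumann subequivalence $z_0^\perp q \leqMvN z_0^\perp\srem$ to a genuine unitary conjugate $r \leq z_0^\perp\srem$ with $r \simu z_0^\perp q$ by splitting into finite and properly infinite summands, and later to pass from $\srem \leqMvN q$ to $\srem \lequ q$ via a ``matching complement condition'' $q^\perp \leqMvN \srem^\perp$. Neither detour is needed, and the complement route is not obviously correct as stated. The key observation you are missing is simply that $q$ and $\srem$ are \emph{orthogonal}: since $q \in M$ we have $q \leq s$, while $\srem = C(q)-s \perp s$. Hence $z_0^\perp q \perp z_0^\perp\srem$ and Proposition~\ref{prop:mvNimpliesunitary} immediately upgrades $\leqMvN$ to $\lequ$ in both places. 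This is exactly what the paper does: from orthogonality it obtains unitaries $u,v$ with $z\srem \geq z(uqu^*)$ and $z^\perp\srem \leq z^\perp(vqv^*)$, and the rest is bookkeeping.

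There is also a minor wrinkle in how you deploy maximality. You aim for a contradiction by exhibiting $q' \notin M$ that is partially orthogonal to all of $M$; this needs $r \neq 0$, i.e.\ $z_0^\perp q \neq 0$. The paper avoids this case split by arguing the other way round: it sets $u_z = zu + z^\perp$, shows $u_zqu_z^*$ is partially orthogonal to every $m \in M$, and then uses maximality to force $u_zqu_z^* \in M$ (rather than to exclude it). From $u_zqu_z^* \leq s$ and $zu_zqu_z^* = zuqu^* \leq z\srem$ one gets $zu_zqu_z^* = 0$, hence $zq = 0$, hence (after first arranging $z \leq C(q)$) $z = 0$, so $\srem \leq vqv^*$. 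Your version is easily repaired---if $z_0^\perp q = 0$ then $C(q) \leq z_0$, so $z_0^\perp\srem = 0$ and $\srem \leqMvN q$ directly---but the paper's phrasing sidesteps the issue entirely.
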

\begin{proof}
Let $O$ be the unitary orbit of $q$.  The partially orthogonal subsets of $O$  which contain $q$ form a poset under inclusion.
Given a chain in this poset, its union is partially orthogonal: any two projections in the union must appear together somewhere in one subset in the chain and are thus partially orthogonal. Hence, by Zorn's lemma, we can construct a maximal partially orthogonal subset $M$ of the unitary orbit of $q$ such that $q \in M$. Clearly, $M$ satisfies conditions \eqref{mainlemmaitem:first}--\eqref{mainlemmaitem:penultimate}. 

Denote by $s$ the supremum of the projections in $M$.
Its central carrier $C(s)$ is equal to the central carrier $C(q)$ of $q$.  This is because $C(-)$ is constant on unitary orbits and $C(\sup_{m \in M}m) = \sup_{m \in M}C(m)$.
We now need to show that $\srem \lequ q$.

By the comparison theorem for projections in a von Neumann algebra (Theorem~\ref{comparison}), there is a central projection $y$ such that
\begin{equation}\label{eq:applycompthm_y}
y\srem \geqMvN  yq \;\;\text{ and }\;\; y^\perp \srem \leqMvN y^\perp q \Mdot
\end{equation}
We show that this central projection can be taken to be below $C(q)$.
Consider the central projection $z = y\, C(q)$. Then,
for any projection $r \leq C(q)$,
i.e. $C(q) r = r$,
we have
\[z r  = y C(q) r = y r
\Mand
z^\perp r = r - zr = r - yr = y^\perp r \Mdot\]
Since both $\srem, q \leq C(q)$, one can rewrite \eqref{eq:applycompthm_y} as
\[z\srem \geqMvN  zq \;\;\text{ and }\;\; z^\perp \srem \leqMvN z^\perp q \Mcomma\]
where $z = y\, C(q) \leq C(q)$.


By Proposition~\ref{prop:mvNimpliesunitary}, as $q$ and $\srem$ are orthogonal,
there are unitaries that witness these order relationships.
That is, there are unitaries $u$ and $v$ such that 
\begin{equation}\label{eq:six}
z\srem\geq z(uqu^*)  \;\;\text{ and }\;\; z^\perp \srem \leq z^\perp (vqv^*) \Mdot
\end{equation}
We will show that $z$ vanishes and thus conclude that $\srem \leq vqv^*$.

Define $u_z$ to be the unitary $zu + z^\perp 1$ which acts as $u$ within the range of $z$ and as the identity on the range of $z^\perp $.
We first establish that $u_zqu_z^*$ and $m$ are partially orthogonal for every $m \in M$.  

Let $m \in M$. As $M$ was defined to be a partially orthogonal set of projections and $q \in M$,
we know that $q$ and $m$ are partially orthogonal, and thus that $z^\perp q$ and $z^\perp m$ are partially orthogonal.
However, as $z^\perp u_z = z^\perp $, we may express this as: $z^\perp (u_zqu_z^*)$ and $z^\perp m$ are partially orthogonal.
Additionally, on the range of $z$, we have that
\[z(u_z q u_z^*) = z(uqu^*) \leq z\srem \;\;\text{ and }\;\; zm \leq zs \Mcomma\]
implying that $z(u_z q u_z^*)$ and $zm$ are orthogonal, hence partially orthogonal.
Putting both parts together, we have that $z^\perp u_z q u_z^*$ and $z^\perp m$ are partially orthogonal and that $z(u_z q u_z^*)$ and $zm$ are partially orthogonal.
We may thus apply Lemma~\ref{polemma} and conclude that $u_z q u_z^*$ and $m$ are partially orthogonal as desired.

Having established that $u_z q u_z^*$ is partially orthogonal to all the projections in $M$, it follows by maximality of $M$ that $u_z q u_z^* \in M$.
Hence, \[z u_z q u_z^* \leq u_z q u_z^* \leq \sup M = s \Mdot\]
Yet, by construction, 
\[z u_z q u_z^* = zuqu^* \leq z\srem \leq \srem\Mcomma\] and so $z u_z q u_z^*$ must be orthogonal to $s$.
Being both contained within and orthogonal to $s$, $z u_z q u_z^*$ must vanish. Therefore, the unitarily equivalent projection $zq$ must also vanish.
Now, $zq = 0$ implies that $z^\perp$ covers $q$. But since $z^\perp$ is a central projection, it must also cover the central carrier of $q$, i.e.  $C(q) \leq z^\perp$. We thus have $z \leq C(q) \leq z^\perp$, forcing $z$ to be zero.

We may finally conclude, by (\ref{eq:six}), that $\srem \leq vqv^*$.
\qed\end{proof}

\subsection{Main theorem}

Theorem~\ref{main-reformulated}, and thus our principal result, Theorem~\ref{main},
will follow as an immediate corollary of the following theorem. 
\begin{theorem}
  In a von Neumann algebra $\ca$, any invariant family of projections $\Pf$ arises from a central projection, i.e. $\Pf$ is equal to $\Pzf$ for the  central projection $z = \Pf(\cz(\ca))$.
\end{theorem}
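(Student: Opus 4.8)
The plan is to show that, setting $z := \P{\cz(\ca)}$, the families $\Pf$ and $\Pzf$ agree on every commutative sub-von Neumann algebra $V$. Note first that $\cz(\ca)$ is itself a commutative sub-von Neumann algebra, so $z$ is a projection lying in $\cz(\ca)$, i.e.\ a central projection, and $\Pz{V}$ is by definition the largest projection of $V$ below $z$. It therefore suffices to prove, for each $V$, the two inequalities $\Pz{V} \le \P{V}$ and $\P{V} \le z$: the latter, together with $\P{V} \in V$, forces $\P{V} \le \Pz{V}$, giving equality. The inequality $\Pz{V} \le \P{V}$ is the easy direction. Writing $\hat V := (V \cup \cz(\ca))''$, monotonicity (Lemma~\ref{lemma:approxprops}-\eqref{item:monotone}) gives $z = \P{\cz(\ca)} \le \P{\hat V}$; hence any projection $q \in V$ with $q \le z$ satisfies $q \le \P{\hat V}$, so $q \le \P{V}$ by the best-approximation property (Lemma~\ref{lemma:approxprops}-\eqref{item:approxbest}), and taking the supremum over all such $q$ yields $\Pz{V} \le \P{V}$.

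The substance lies in proving $\P{V} \le z$. Put $p := \P{V}$. First I would establish that $\P{\V{p}} \ge p$: since $p \in V \subseteq \hat V$ and $p = \P{V} \le \P{\hat V}$ by monotonicity, and since $\V{p} = (\{p\}\cup\cz(\ca))'' \subseteq \hat V$, applying the best-approximation property to $\V{p} \subseteq \hat V$ gives $p \le \P{\V{p}}$. Now apply Lemma~\ref{lemma:main} to $p$ to obtain a commutative subset $M$ of the unitary orbit of $p$ with $p \in M$, whose supremum $s := \sup M$ satisfies $C(s) = C(p)$ and $\srem \lequ p$, where $\srem := C(p) - s$. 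For each $m \in M$, write $m = u_m p u_m^*$; since conjugation by a unitary fixes $\cz(\ca)$ and thus sends $\V{p}$ to $\V{m}$, invariance gives $\P{\V{m}} = u_m \P{\V{p}} u_m^* \ge u_m p u_m^* = m$. As $M$ is commutative and $\P{\V{m}} \ge m$ for all $m \in M$, Lemma~\ref{lemma:approxsupVm} yields $\P{\V{s}} \ge s$.

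It remains to absorb the remainder $\srem$. Since $C(s) \in \cz(\ca) \subseteq \V{s}$ and $s \in \V{s}$, the projection $\srem = C(s) - s$ lies in $\V{s}$, so $\V{\srem} \subseteq \V{s}$. From $\srem \lequ p$ I would choose a unitary $w$ with $\srem \le w p w^* =: p'$; by invariance $\P{\V{p'}} = w\P{\V{p}}w^* \ge wpw^* = p' \ge \srem$. As $\srem \le p'$, the projections $\srem$ and $p'$ commute, so $\V{\srem, p'} := (\{\srem, p'\}\cup\cz(\ca))''$ is commutative and contains both $\V{p'}$ and $\V{\srem}$; monotonicity gives $\P{\V{\srem, p'}} \ge \P{\V{p'}} \ge \srem$, and the best-approximation property (for $\V{\srem} \subseteq \V{\srem, p'}$) then gives $\srem \le \P{\V{\srem}} \le \P{\V{s}}$. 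Thus both $s \le \P{\V{s}}$ and $\srem \le \P{\V{s}}$, and since $s \perp \srem$ we obtain $C(p) = s + \srem \le \P{\V{s}}$. Finally $C(p) \in \cz(\ca) \subseteq \V{s}$ with $C(p) \le \P{\V{s}}$, so the best-approximation property gives $C(p) \le \P{\cz(\ca)} = z$, whence $p = \P{V} \le C(p) \le z$, completing the hard direction.

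The main obstacle is precisely this remainder-absorption step: monotonicity alone only delivers $z \le \P{\V{s}}$, which is the wrong direction, so all the content is in showing $\P{\V{s}} \ge C(p)$. This is where the full strength of Lemma~\ref{lemma:main} is indispensable---not merely $C(s) = C(p)$ but the bound $\srem \lequ p$---together with the invariance hypothesis, which is what allows one to transport the covering of $\srem$ by a unitary conjugate of $p$ back inside $\V{s}$. Everything else reduces to the monotonicity and best-approximation properties of consistent families recorded in Lemma~\ref{lemma:approxprops} and to Lemma~\ref{lemma:approxsupVm}.
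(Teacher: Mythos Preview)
Your proof is correct and follows essentially the same route as the paper's: apply the main lemma to the projection $p = \P{V}$, use invariance to propagate the inequality $\P{\V{p}} \ge p$ across the unitary orbit, invoke Lemma~\ref{lemma:approxsupVm} for $s$, and then absorb the remainder $\srem$ via a single extra unitary conjugate and the best-approximation property. The only cosmetic difference is in the endgame: the paper restricts first to $W \supseteq \cz(\ca)$, obtains the \emph{equality} $\P{\V{q}} = q$, and after showing $\P{\V{uqu^*}} \ge C(q)$ conjugates back to conclude $q \ge C(q)$, i.e.\ that $\P{W}$ is itself central; you instead work with an arbitrary $V$, have only $\P{\V{p}} \ge p$, and so push $C(p) \le \P{\V{s}}$ down to $\cz(\ca)$ to get $C(p) \le z$ directly.
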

\begin{proof}
Let $\Pf$ be an invariant family of projections.
Suppose $W $ is a commutative sub-von Neumann algebra of $\ca$ which contains the centre $\cz(\ca)$, and let $q$ be the projection $\P{W}$.
We claim that $q$ is, in fact, equal to its own central carrier $C(q)$, and thus central.
As $q \leq C(q)$ is true by definition, we must show that $q \geq C(q)$.

We start by applying Lemma~\ref{lemma:main} to  $q$. Let $M$ denote the resulting commuting set of projections in the unitary orbit of $q$,
$s$ denote the supremum of the projections in $M$, and $\srem$ denote $C(q) - s = C(s) - s$. From the lemma, we know that $\srem \lequ q$, i.e. there exists a unitary $u$ such that $\srem \leq u q u^*$.


First note that, since $\V{q} \subset W$ and $q \in \V{q}$,  by Lemma~\ref{lemma:approxprops}-\eqref{item:approxbesteq}, we have that  $\P{\V{q}} = q$.
Then, by unitary invariance of the family of projections, for every $m \in M$ we have that $\P{\V{m}} = m$.
Hence, we can apply Lemma~\ref{lemma:approxsupVm} to conclude that $\P{\V{s}}\geq s$.
 We also conclude, again by unitary invariance of $\Pf$, that $\P{\V{uqu^*}} = uqu^* \geq \srem$.

Now, note that $uqu^*$ and $\srem$ commute.
Moreover, $\V{s} = \V{\srem}$ since $\srem = C(q) - s$, hence $s_R$ is in the algebra generated by $s$ and the centre, and vice-versa.
So, there is a commutative sub-von Neumann algebra
$\V{s, uqu^*} \supseteq \V{s}, \V{uqu^*}$.
By Lemma~\ref{lemma:approxprops}-\eqref{item:monotone} and the two conclusions of the preceding  paragraph, we then have
\[\P{\V{s, uqu^*}} \geq \P{\V{s}} \vee \P{\V{uqu^*}} \geq s \vee \srem = C(q) \Mdot\]
But, since $C(q) \in \V{uqu^*}$ by virtue of it being contained in the centre, we can apply Lemma~\ref{lemma:approxprops}-\eqref{item:approxbest} to find that $\P{\V{uqu^*}} \geq C(q)$.
Finally, by unitary invariance, \[q = \P{\V{q}} \geq u^*C(q)u = C(q) \Mcomma\] concluding the proof that $q$ is central.


We have shown that the projection $\P{W}$ is central for every commutative sub-von Neumann algebra $W$ containing the centre $\cz(\ca)$.
By Lemma~\ref{lemma:approxprops}-\eqref{item:approxbesteq}, this means that $\P{W}$ is equal to $\P{\cz(\ca)}$, the projection chosen at the centre, for all such $W$. In turn, this determines the image of $\Pf$ on all commutative sub-von Neumann algebras $W'$ as 
\[\P{W'} = \sup\setdef{p \text{ is a projection in $W'$}}{p \leq \P{V_{W'}} = \P{\cz(\ca)}} \Mcomma\]
and we find that $\Pf$ must be equal to $\Pf_{\P{\cz(\ca)}}$.
\qed\end{proof}

\section{Conclusions}\label{sec:conclusions}

In this work, we have argued that a nonstandard---but nevertheless foundationally important---notion of quantum state space is
the dual object of a noncommutative algebra with respect to state-observable duality,
and noted that such spaces are the objects of study within noncommutative geometry of \cstar s.
In noncommutative operator geometry \cite{connes}, \Gelfand--\Naimark\ duality justifies interpreting noncommutative \cstar s as representing the algebra of observables on a hypothetical geometric space; it further provides a heuristic method of translating topological concepts into algebraic language.
We further argued that an explicitly geometric construction of this notion of quantum state space
should provide simple means for the direct extension of topological concepts to noncommutative generalisations in a manner coinciding with the constructions of noncommutative geometry.

Our ansatz for a geometric space associated to a noncommutative algebra comes from the \emph{spectral presheaf} construction of Hamilton--Isham--Butterfield \cite{oldtopos3}, which accounts for the essential nonclassicality of quantum theory expressed by the Bell--Kochen--Specker theorem \cite{bell1966,KochenSpecker} by associating a classical state space to each context (commutative subalgebra) of the algebra of observables.  We offer an alternate interpretation of the collection of classical state spaces as the collection of tractable quotient spaces of the noncommutative space represented by the algebra of observables.  We show how functorial associations of spatial diagrams to algebras yields automatic methods of extending functors defined on topological spaces to ones defined on \cstar s.  After modifying the spectral presheaf by including data related to inner automorphisms, we consider the extensions of two functors: the topological $K$-functor \cite{atiyah1967k} and the functor $\TT$ that assigns to a space its lattice of closed subsets.

In the former case, we give a novel definition of operator $K$-theory, $K_0$, in terms of a colimit of vector bundles over the finite quotient spaces of stable noncommutative spaces. This formally aligns very closely with the extension $\tilde{K}$ of the topological $K$-functor.
Specifically, we have shown that operator $K_0$ of a \cstar\ $\ca$ corresponds to the extension of the topological $K$-functor from the spaces corresponding to finite-dimensional subalgebras of the stabilisation of $\ca$.
While $K_0 \simeq \tilde{K}$ holds for finite-dimensional \cstar s, whether it holds in general
(or whether $K_0 \simeq \tilde{K} \circ \ck$) remains an open question.

In the latter case,
we establish a bijective correspondence between ideals of a von Neumann algebra
and what could be thought of as clopen subsets of its associated spatial diagram.
More formally, we display a natural isomorphism $\II_\mathsf{W} \simeq \tilde{\TT_\mathsf{W}}$ between
the functor $\II_\mathsf{W}$ that assigns to a von Neumann algebra its lattice of ultraweakly closed, two-sided ideals and the extension of the functor $\TT_\mathsf{W}$ mapping a (hyperstonean) topological space to its lattice of clopen sets.
This theorem is the von Neumann algebraic analogue of the conjecture that $\II \simeq \tilde{\TT}$ where $\II$ is the ideal lattice functor and $\TT$ the functor mapping a topological space to its lattice of closed sets.

As a consequence of the von Neumann algebraic theorem, the $C^*$-algebraic conjecture holds for all finite-dimensional \cstar s.  The question of whether it holds for all \cstar s remains open.  An immediate question is whether the conjecture holds for $AF$-algebras, i.e. those that arise as limits of finite-dimensional \cstar s \cite{bratteli1972inductive}. This would follow immediately from a proof of the continuity of $\tilde{\TT}$.  Another tack would be to prove the whole conjecture directly by using the proof of the von Neumann algebraic version as a guide.  Indeed, one might still be able to reduce the question to one about projections by working in the enveloping von Neumann algebra $\ca^{**}$ of a \cstar\ $\ca$. In this setting, the total ideals of a \cstar\ $\ca$ correspond to certain total ideals of the enveloping algebra $\ca^{**}$ \cite{AlfsenShultz1}: those that correspond to open central projections.  One would have to find a correspondence between open central projections of $\ca^{**}$ and certain families of open projections obeying a restricted form of unitary invariance.
This might also be formulated directly at the $C^*$-algebraic level by considering approximate identities for ideals as playing the role of central projections.

Proving that $\ci \simeq \tilde{\TT}$ would establish a strong relationship between the topologies of the geometric object $G(\ca)$ and $\Prim(\ca)$, the primitive ideal space of $\ca$: we would be able to recover the lattice of the hull-kernel topology on $\Prim(\ca)$ as the limit of the topological lattices of the object $G(\ca)$.  Establishing this conjecture would allow considering $G$ to be an enrichment of $\Prim$.  $\Prim$ is a  $C^*$-algebraic variant of the ring-theoretic spectrum  whose hull-kernel topology provides the basis for sheaf-theoretic methods in ring theory. 

Our proposal for a notion of noncommutative spectrum bears structural similarity with related functional and order-theoretic constructions that represent noncommutative algebras as a commutative fragment augmented with a unitary group action \cite{mayet1992orthosymmetric,heunen2014active,hamhalter2014automorphisms,Doering2012:FlowsOnGeneralisedGelfandSpectra,Barbosa:DPhil}.
Novel abstract frameworks for understanding noncommutative algebras in terms of commutative subalgebras have appeared since our starting to work on this line of research, notably \cite{heunen2014active,FloriFritz:Almost}, which include some results with a similar flavour to ours.
Understanding the relationships between these approaches, and the question of whether some synthesis of them might better clarify noncommutative geometry is an important line of future work.

As discussed in more detail in Section~\ref{ssec:motivation}, to establish a concrete duality, it would be important to characterise which diagrams of spaces arise as spatial diagrams of a noncommutative algebra, and which maps between them correspond to unital $*$-homomorphisms.  To facilitate computations, some notion of a sub-spatial diagram `cover', analogous to a tractable choice of charts for a manifold, may be needed. 
Another key step would be to recover a noncommutative algebra from its spatial diagram together with some extra data.

There are some topological concepts that are usually understood to be inextendable to the commutative setting. The simplest example is provided by the notion of points. We have seen that the forgetful functor to the category $\text{Set}$, which associates to a space its set of points, has a trivial extension to the noncommutative setting, and that this corresponds to the Bell--Kochen--Specker theorem from quantum foundations. So, the topological notion of points does not survive our process of translation, in agreement with the common intuition that noncommutative spaces have \textit{no points}.
On the other hand, there are interesting concepts in the noncommutative setting that become trivial when restricted to the commutative case, e.g. Tomita--Takesaki theory. Clearly, such intrinsically noncommutative concepts cannot be obtained by the process of extension outlined in this article, via diagrams of topological spaces corresponding to commutative subalgebras. However, they can provide valuable guidance in determining what extra data must be adjoined  to the diagram of topological spaces in order to recover a noncommutative algebra, since that data must be used in an essential way to define those concepts.

It would also be interesting to calculate the spatial diagram explicitly for some special examples.  One promising possibility is the canonical commutation relations algebra, which is closely connected with quantisation and thus physically very significant.  In this case, the Krichever--Mulase classification \cite{mulase1990category} of  certain commutative subalgebras of $\C[[x]][\partial]$   provides a potentially highly useful roadmap.  Another possibly tractable class of algebras for computations are those which arise as crossed product algebras, wherein a group action on a \cstar\ is embedded in a larger \cstar\ such that the action is realised as a group of inner automorphisms.  This class includes within it the important example of noncommutative tori, the computation of whose $K$-theory was considered a very difficult problem \cite{rieffel1981c}.

The idea of looking at commutative quotients is a very general one and could perhaps be applied to analyse other sorts of noncommutative algebras other than \cstar s.  The ideas outlined above might be applied to any duality involving a category of geometric objects and a category of commutative algebras.

\begin{acknowledgements}
It is our pleasure to thank Samson Abramsky, Bob Coecke, Andreas D\"oring, George Elliott, Chris Heunen, Kobi Kremnitzer, Klaas Landsman, Brent Pym, Jamie Vicary, Manuel Reyes, and various participants of the Operator Algebras Seminar at the Fields Institute for their guidance, encouragement, and mathematical insights.
We also thank the anonymous referees for valuable suggestions and insights.

This work was carried out in part while ND was based at the Department of Computer Science, University of Oxford,
and finalised while both authors visited the Simons Institute for the Theory of Computing (supported by the Simons Foundation) at the University of California, Berkeley, as participants of the Logical Structures in Computation programme.

Financial support from the following is gratefully acknowledged:
the Clarendon Fund (ND);
Merton College, Oxford (ND);
the Oxford University Computing Laboratory (ND);
the Natural Sciences and Engineering Research Council of Canada (ND);
Marie Curie Initial Training Network `MALOA -- From MAthematical LOgic to Applications', PITN-GA-2009-238381 (RSB);
FCT -- Funda\c{c}\~{a}o para a Ci\^encia e Tecnologia (Portuguese Foundation for Science and Technology), PhD grant SFRH/BD/94945/2013 (RSB);
John Templeton Foundation, Grant ID-35740, `Categorical Unification' (RSB);
Oxford Martin School James Martin Program on Bio-inspired Quantum Technologies (RSB);
Engineering and Physical Sciences Research Council, EP/N018745/1 \& EP/N017935/1, `Contextuality as a Resource in Quantum Computation' (ND \& RSB);
and the Simons Institute for the Theory of Computing (ND \& RSB).
\end{acknowledgements}

\appendix

\section{Concrete colimit construction}\label{app:colimits}

Here, we give a concrete construction of the generalised colimit functor $\fdec{\colim}{\codiag(\catC)}{\catC}$
in terms of coequalisers of coproducts.

Recall that the colimit of a functor $D$ from a category $\catA$ to a cocomplete category $\catC$ can be expressed as a coequaliser of two coproducts \cite[p. 355]{MacLaneMoerdijk}:
\[
\xymatrix{\coprod_{\fdec{u}{i}{j}}{D(\dom u)} \ar@<0.5ex>[r]^-\theta \ar@<-0.5ex>[r]_-\tau & \coprod_{i}{D(i)}}
\]
The first coproduct is over all morphisms $\fdec{u}{i}{j}$ of $\catA$ and the second is over all objects $i$ of $\catA$.
We denote the canonical injections for these coproducts by
\[
\fdec{\lambda_v}{D(\dom v)}{\coprod_{\fdec{u}{i}{j}}{D(\dom u)}}
\]
and
$$\fdec{\kappa_j}{D(j)}{\coprod_{i}{D(i)}} \Mdot$$
The morphisms $\theta$ and $\tau$ can be defined by specifying their compositions with the $\lambda_v$:
\[
\theta \circ \lambda_v = \kappa_{\dom v}
\Mand
\tau \circ \lambda_v = \kappa_{\cod v} \circ D(v)
\]
The advantage of this coequaliser presentation of the colimit is that we may determine a $\catC$-morphism between the colimits of two functors $\fdec{D}{\catA}{\catC}$ and $\fdec{E}{\catB}{\catC}$ by specifying a natural transformation between their coequaliser diagrams. That is, by giving its components,  $\catC$-morphisms $N$ and $M$ such that the following diagrams commute:
\begin{equation*}
   \xymatrix{\coprod_{\fdec{u}{i}{j}}{D(\dom u)} \ar@<0.5ex>[r]^-\theta  \ar[d]^N & \coprod_{i}{D(i)} \ar[d]^M \\
\coprod_{\fdec{u'}{i'}{j'}}{E(\dom u')} \ar@<0.5ex>[r]^-{\theta'}  & \coprod_{i'}{E(i')}
\\
\coprod_{\fdec{u}{i}{j}}{D(\dom u)}  \ar@<-0.5ex>[r]_-\tau \ar[d]^N & \coprod_{i}{D(i)} \ar[d]^M \\
\coprod_{\fdec{u'}{i'}{j'}}{E(\dom u')} \ar@<-0.5ex>[r]_-{\tau'} & \coprod_{i'}{E(i')}}
   \end{equation*}
where $i'$ and $u'$ range over all objects and all morphisms of $\catB$, respectively, while $i$ and $u$ are as above.
We denote the canonical injections into the coproducts for $E$ by $\lambda_{v'} '$ and $\kappa_{j'}'$.

Given a $\codiag(\catC)$-morphism $(f, \eta)$ between $D$ and $E$ we define $N$ and $M$ by giving their compositions with the canonical injections:
\[
N \circ \lambda_v = \lambda_{f(v)}' \circ \eta_{\dom v} \Mand
M \circ \kappa_j = \kappa_{f(j)}' \circ \eta_j
\]

It is straightforward to verify that the above diagrams commute, that is, that $\theta' \circ N = M \circ \theta$ and that $\tau' \circ N = M \circ \tau$, by computing the composition of these maps with the $\lambda_v$.
The $\catC$-morphism assigned by $\colim$ to $(f, \eta)$ is then defined to be the morphism that is induced by the natural transformation (whose components are $N$ and $M$) between the coequaliser diagrams for the colimits of $D$ and $E$.  

Functoriality of $\colim$ is then straightforwardly verified by computing the compositions of the components of the natural transformations induced by $(f, \eta)$ and $(g, \mu)$ and seeing that the resulting natural transformation is the same as the one induced by $(g \circ f, \mu_f \circ \eta)$.

\section{Topological and $C^*$-algebraic $K$-theory}\label{app:ktheory}

\subsection{Topological $K$-theory}

Topological $K$-theory, invented by Atiyah--Hirzebruch \cite{atiyah1967k} after Grothendieck \cite{grothendieck1968classes},  is an extraordinary cohomology theory,
i.e. satisfies the Eilenberg--Steenrod axioms \cite{eilenberg1945axiomatic} except the dimension axiom.
It is determined by a sequence of contravariant functors from $\cat{KHaus}$ to $\cat{Ab}$.
%
%
%
After  early successes, including the solution to the classical problem of determining how many linearly independent vector fields can be constructed on $\mathbf{S}^n$ \cite{adams1962vector}, the subject blossomed to include algebraic and analytic versions.  The core idea is to describe the geometry of a space by algebraic information about the possible vector bundles over it.  Here, we briefly review its definition.    Its generalisation to \cstar s, operator $K_0$, is a key tool of noncommutative geometry and will be outlined in the next subsection.

\begin{definition}
For a compact Hausdorff space $X$, the \emph{vector bundle monoid} $V(X)$ is the set of isomorphism classes of complex vector bundles over $X$ with the abelian addition operation of fibrewise direct sum:  $[E] + [F] = [E \oplus F]$. A continuous function $\fdec{f}{X}{Y}$  yields a monoid homomorphism $\fdec{V(f)}{V(Y)}{V(X)}$ by the pullback of bundles.
That is, if $\fdec{p}{E}{Y}$ is a bundle over $Y$, the bundle $f^*E$ is a bundle over $X$ given by the projection to $X$ of
$$\setdef{(x,v) \in X \times E}{f(x) = p(v)}$$
and  $V(f)([E]) = [f^*E]$.
 This defines a functor $\fdec{V}{\cat{KHaus}^\op}{\cat{AbMon}}$.
\end{definition}

\begin{definition}For an abelian monoid $M$, the \emph{Grothendieck group} of $M$,  $\cg(M)$, is the abelian group $(M \times M)\slash{\sim}$ where $\sim$ the equivalence relation given by
\[(a, b) \sim (c, d) \Miff \Exists{e \in M} a + d + e =b +c +e \Mdot\]
For an abelian monoid homomorphism $\fdec{\phi}{M}{N}$, the abelian group homomorphism $\fdec{\cg(\phi)}{\cg(M)}{\cg(N)}$ is given by by $\cg(\phi)([(a, b)])  = [(\phi(a),\phi(b))]$.  This defines a functor $\fdec{\cg}{\cat{AbMon}}{\cat{Ab}}$.
\end{definition}

Intuitively, an element $[(a,b)]$ of $\cg(M)$ can be thought of as a formal difference $a-b$ of elements of $M$.
With this interpretation in mind, it is easy to see that $\cg(M)$ is indeed a group, with addition given componentwise, neutral element $[(0,0)]$, and the inverse of $[(a,b)]$ equal to $[(b,a)]$. Moreover, there is a monoid homomorphism $\fdec{i}{M}{\cg(M)}$ given by $a \longmapsto [(a,0)]$.
As an example, the Grothendieck group of the additive monoid of natural numbers (including zero) is the additive group of the integers.

The Groethendieck group functor $\cg$ is an explicit presentation of the group completion functor,
the left adjoint to the forgetful functor from $\cat{Ab}$ to $\cat{AbMon}$.
This means that  $\cg(M)$ is the `most general' group containing a homomorphic image of $M$, in the sense that it satisfies the universal property that 
any monoid homomorphism from $M$ to an abelian group factors uniquely through the homomorphism $\fdec{i}{M}{\cg(M)}$.


\begin{definition}
The \emph{topological $K$-functor} $\fdec{K}{\cat{KHaus}^\op}{\cat{Ab}}$ is $\cg \circ V$.
\end{definition}

From the topological $K$-functor, one can easily construct the full sequence of functors $K_n$ for $n \in \NN$.

\begin{definition}
The \emph{suspension functor} $S$ maps the category of compact Hausdorff space to itself by sending a space $X$ to the quotient space
$$X \times [0,1] / \{(x,0) \sim (x',0)\text{ and }(x,1) \sim (x',1)\text{ for all } x,x' \in X\}$$
and a continuous function $\fdec{f}{X}{Y}$ to the map $[(x,t)] \longmapsto [(f(x),t)]\Mdot$
\end{definition}
\begin{definition}\emph{Topological $K$-theory} is the sequence of functors $\fdec{K_n}{\cat{KHaus}^\op}{\cat{Ab}}$  defined by $K_n = K \circ S^{|n|} \Mdot$  
\end{definition}
 
Bott periodicity \cite{atiyah1964periodicity} provides natural isomorphisms $K_n \simeq K_{n+2}\Mdot$  We are left with $K_0 = K$ and $K_1 = K \circ S \,$.  Note that topological $K$-theory additionally possesses a ring structure which does not survive in the noncommutative case.

\subsection{Operator $K$-theory}

Here, we outline the generalisation of topological $K$-theory to operator $K$-theory by the canonical method of noncommutative geometry.  We provide the definition and properties of the operator $K_0$-functor which we will use in our analysis of the extension of the topological $K$-functor.  These are basic facts found in any introduction to the subject, e.g. \cite{rordam,wegge1993k,fillmore}.
We start by defining $K_0$ for unital \cstar s, and then extend it to the nonunital case via unitalisation.

\subsubsection{Operator $K$-theory for unital $C^*$-algebras}
In order to generalise a topological concept to the noncommutative case, one must begin with a characterisation in terms of commutative algebra of the topological concept in question.  In the case of $K$-theory, this requires phrasing the notion of \emph{a complex vector bundle over $X$} in terms of the algebra $C(X)$ of continuous, complex-valued functions on $X$.
This rephrasing is provided by the Serre--Swan theorem:

\begin{theorem}[Serre--Swan, \cite{swan}]
The category of complex vector bundles over a compact Hausdorff space $X$ is equivalent to the category of finitely generated projective $C(X)$-modules.
\end{theorem}

Recall that a projective $\ca$-module is the direct summand of a free $\ca$-module.  
 Roughly, the module associated to a vector bundle $E$ over $X$ is the set of continuous global sections of $E$ with the obvious operations.  This justifies considering a finitely generated projective (left) $\ca$-module to represent a complex vector bundle over the noncommutative space underlying the \cstar\ $\ca$. 

The canonical translation process of noncommutative geometry suggests, having now in our possession an algebraic characterisation in terms of $C(X)$ of the topological notion of complex vector bundle, that we use it to define its noncommutative generalisation.  That is, define the unital Murray--von Neumann semigroup of a \cstar\ to be the abelian monoid of its finitely generated projective modules (up to the appropriate notion of equivalence and with an appropriate addition operation). It turns out to be more convenient to work with an algebraic gadget which is equivalent to finitely generated projective $\ca$-modules: namely, projections in a matrix algebra $M_n(\ca)$ over $\ca$.
If $\mu$ is such a finitely generated projective module $\mu$, 
then there exists another module  $\mu^\perp$ such that  $\mu \oplus \mu^\perp \simeq \ca^n$.
We thus identify the module $\mu$ with the projection $\fdec{p}{\ca^n}{\mu}$, or rather, with the canonical representation of that projection as an element of the matrix algebra $M_n(\ca)$.

Equipped with our algebraic characterisation of vector bundles, we are ready to begin defining operator $K$-theory in a manner directly analogous with the construction of topological $K$-theory.

\begin{definition}
Let $\ca$ be a \cstar.
Two projections $p \in M_n(\ca)$ and $q \in M_m(\ca)$ are Murray--von Neumann equivalent, denoted $p \simMvN q$, whenever there is a partial isometry $v$ in the \cstar\ $M_{m,n}(\ca)$ of $m \times n$  matrices over $\ca$ such that $p = vv^*$ and $q = v^*v$.
\end{definition}

\begin{definition}[The Murray--von Neumann semigroup for unital $\ca$]
Let $\ca$ be a unital \cstar.
Its \emph{Murray--von Neumann semigroup}, $V_0(\ca)$, is the set of Murray--von Neumann equivalence classes of projections in matrices over $\ca$:
\[ \left.\bigsqcup_{n \in \NN} \setdef{p \in M_n(\ca)}{p \text{ is a projection}} \middle\slash {\simMvN}\right. \Mdot \]
It is equipped with the abelian addition operation
\[[p] + [q] = \left[\left( \begin{array}{ccc}
p & 0 \\
0 & q \\
\end{array} \right) \right]\Mcomma\]
for which the equivalence class of the zero projection is a neutral element, therefore forming an abelian monoid.
A unital $*$-homomorphism $\fdec{\phi}{\ca}{\cb}$ yields a monoid homomorphism
$\fdec{V_0(\phi)}{V_0(\ca)}{V_0(\cb)}$ given by
$[p] \longmapsto [M_n(\phi)(p)]$
for each $n \in \NN$ and $p$ a projection in $M_n(\ca)$,
where $M_n(\phi)$ acts on elements of $M_n(\ca)$ by entrywise application of $\phi$.
This defines a functor $\fdec{V_0}{\cat{uC^*}}{\cat{AbMon}}$.
\end{definition}

\begin{definition}
The \emph{operator $K_{0}$-functor for unital \cstar s}, $\fdec{K_{0}}{\cat{uC^*}}{\cat{Ab}}$,
is $\cg \circ V_0$.
\end{definition}

\subsubsection{Operator $K$-theory for (nonunital) $C^*$-algebras}

So far, we have defined operator $K$-theory only for the unital case.  We describe the extension of $K_0$ to all \cstar s. 

\begin{definition}\label{def:minunitalisation}
The \emph{minimal unitalisation} of a \cstar\ $\ca$ (which itself may be unital or nonunital), is defined as the unital \cstar\ $\ca^+$ with underlying set $\ca \times \C$, componentwise addition and scalar multiplication, and multiplication and involution given by
$$(a, z)(a',z') = (aa' +z'a + za', zz') \text{, \,\,\,\,\,\,\,\,\,\,\, }(a,z)^* = (a^*, \bar z) \Mdot$$ 
There exists a unique $C^*$-norm on $\ca^+$, whose definition we omit, which extends the norm on $\ca$.
\end{definition}

  Note  that $(-)^+$ is a functor from the category $\cat{C^*}$ of \cstar s with $*$-homomorphisms to the category $\cat{uC^*}$ of unital \cstar s and unital $*$-homomorphisms: a $*$-homomorphism $\fdec{\phi}{\ca}{\cb}$ yields $(a,z) \longmapsto (\phi(a),z)$.  

A copy of $\ca$ lives canonically inside $\ca^+$ in the first component.
  Indeed, the unitalisation of a \cstar\ yields a short exact sequence
 $$0 \longrightarrow \ca \xlongrightarrow{\iota} \ca^+ \xlongrightarrow{\pi} \C \longrightarrow 0$$
 with $\iota$ being the injection into the first component and $\pi$ being the projection to the second component.
 Exactness justifies identifying $\ca$ with $\ker\pi$.

\begin{definition}
The $K_0$ group of a \cstar\ $\ca$ is the subgroup of $K_0(\ca^+)$ given by the kernel of $K_0(\pi)$.
A $*$-homomorphism $\fdec{\phi}{\ca}{\cb}$ yields a homomorphism from $\ker K_0(\ca^+ \xlongrightarrow{\pi} \C)$ to $\ker K_0(\cb^+ \xlongrightarrow{\pi} \C)$ by restriction of $K_0(\phi^+)$ to the kernel of $K_0(\ca^+\ \xlongrightarrow{\pi} \C)$.
This defines the \emph{operator $K_0$-functor}, $\fdec{K_{0}}{\cat{C^*}}{\cat{Ab}}$.
\end{definition}

\subsubsection{Higher operator $K$-groups}

As in the topological case, one can easily construct the full sequence of functors $K_n$ for $n \in \NN$.

\begin{definition}
The \emph{suspension functor} $S$ maps the category of \cstar s to itself by sending a \cstar\ $\ca $ to a sub-\cstar\ of $C(\mathbb{T}, \ca)$, the \cstar\ of continuous $\ca$-valued functions on the complex unit circle $\mathbb{T}$; $S(\ca)$ consists of those functions $\fdec{f}{\mathbb{T}}{\ca}$ such that $f(1) =  0$ (or alternatively, $S(\ca) = \ca \otimes C_0(\mathbb{R})$, the tensor product of $\ca$ with the continuous, complex-valued functions on $\mathbb{R}$ vanishing at infinity). 
A $*$-homomorphism $\fdec{\phi}{\ca}{\cb}$ is mapped to
the $*$-homomorphism $\fdec{S(\phi)}{S(\ca)}{S(\cb)}$ defined by $([S\phi](f))(t) \longmapsto \phi(f(t))$ for all $t \in \mathbb{T}$.
\end{definition}

Note that this suspension functor is an extension of the one defined on topological spaces in the sense that for algebras $\ca = C(X)$, we have that $S(\ca) = C(S(X))$.
\begin{definition}\emph{Operator $K$-theory} is the sequence of functors $\fdec{K_n}{\cat{C^*}}{\cat{Ab}}$  defined by $K_n = K_{0} \circ S^{|n|}\Mdot  $  
\end{definition}

Generalised Bott periodicity provides natural isomorphisms $K_n \simeq K_{n+2}\Mdot$  We are left with $K_0  $ and $K_1 = K_{0} \circ S$.

\subsubsection{Stability}

\begin{definition}  The \emph{compact operators $\ck$} is the sub-\cstar\ of $\cb(\ch)$, with $\ch$ a Hilbert space of countable dimension, which is generated by the finite rank operators.  

Alternatively, it is defined as the colimit (direct limit) in the category of $C^*$-\cstar s of the sequence of matrix algebras
$$M_1(\C) \hookrightarrow M_2(\C) \hookrightarrow M_3(\C) \hookrightarrow \cdots$$
where the injections are inclusion into the upper left corner: $x \longmapsto \left( \begin{array}{ccc}
x & 0 \\
0 & 0 \\
\end{array} \right)$.

\end{definition}

The \cstar\ $\ck$ is \emph{nuclear}, which means that, for any \cstar\ $\ca$, there is a unique $C^*$-norm on the algebraic tensor product $\ca \otimes_{\mathsf{alg}} \ck$ and thus we may speak unambiguously of the \cstar\ $\ca \otimes \ck$.

\begin{definition}
The \emph{stabilisation functor} from the category of \cstar s to itself,
which we (by harmless abuse of notation) denote by $\fdec{\ck}{\cat{C^*}}{\cat{C^*}}$,
maps a \cstar\ $\ca$ to the \cstar\ $\ck(\ca) = \ca \otimes \ck$.
A $*$-homomorphism $\fdec{\phi}{\ca}{\cb}$ is mapped to $\fdec{\ck(\phi)}{\ck(\ca)}{\ck(\cb)}$ defined by $\ck(\phi) = \phi \otimes id_\ck$.
 
It is alternatively defined as the direct limit of matrix algebras.
That is, $\ck(\ca)$ is the colimit in the category of \cstar s of
$$M_1(\ca) \hookrightarrow M_2(\ca) \hookrightarrow M_3(\ca) \hookrightarrow \cdots$$
where the morphisms are inclusion into the upper left corner.  A $*$-homomorphism $\fdec{\phi}{\ca}{\cb}$ yields homomorphisms $\fdec{M_n(\phi)}{M_n(\ca)}{M_n(\cb)}$ which form the components of the natural transformation yielding $\ck(\phi)$.
\end{definition}

Since the \cstar\ $\ck \otimes \ck$ is isomorphic to $\ck$,
the stabilisation functor is an idempotent operation, i.e. $\ck \circ \ck \simeq \ck$.

\begin{definition}
A \cstar\ $\ca$ is called \emph{stable} or \emph{a stabilisation} if it is fixed (up to isomorphism) by the $\ck$ functor, i.e. $\ca \simeq \ca \otimes \ck$.
\end{definition}

Note that no stable \cstar\ can be unital.
Two \cstar s $\ca$ and $\cb$ are \emph{stably equivalent} when $\ck(\ca) \simeq \ck(\cb)$.
Among stable \cstar s, stable equivalence reduces to ordinary isomorphism equivalence.
As we shall see, operator $K$-theory doesn't distinguish between stably equivalent algebras.

\begin{theorem}
Operator $K$-theory is matrix stable.  That is, $K_0 \simeq K_0 \circ M_n$ and $K_1 \simeq K_1 \circ M_n \,$,
where $M_n$ is the functor that forms $(n \times n)$-matrix algebras over \cstar s.
\end{theorem}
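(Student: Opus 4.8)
The plan is to exhibit an explicit natural transformation realizing the asserted isomorphism and then check it is a pointwise isomorphism, disposing of unital algebras first and bootstrapping to the general case. For each \cstar\ $\ca$, define the corner embedding $\fdec{\lambda_\ca}{\ca}{M_n(\ca)}$ sending $a$ to $e_{11}\otimes a$, i.e. to the matrix with $a$ in the top-left entry and zeros elsewhere. This is a (generally non-unital) $*$-homomorphism and is natural: for $\fdec{\phi}{\ca}{\cb}$ one has $M_n(\phi)\circ\lambda_\ca = \lambda_\cb\circ\phi$, since both act entrywise. Applying $K_0$ then yields a natural transformation $\fdec{\eta}{K_0}{K_0\circ M_n}$ with components $\eta_\ca = K_0(\lambda_\ca)$, and it remains to prove each $\eta_\ca$ is an isomorphism of abelian groups.

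For unital $\ca$, recall $K_0 = \cg\circ V_0$, and that $\cg$ carries monoid isomorphisms to group isomorphisms, so it suffices to analyse the effect of $\lambda_\ca$ on Murray--von Neumann semigroups. The structural input is the canonical isomorphism $M_k(M_n(\ca))\cong M_{kn}(\ca)$: under it, the image $M_k(\lambda_\ca)(p)$ of a projection $p\in M_k(\ca)$ is unitarily equivalent, via a permutation matrix, to $\mathrm{diag}(p,0)\in M_{kn}(\ca)$, whose Murray--von Neumann class is that of $p$. Thus $\eta_\ca$ amounts, under these identifications, to viewing a projection over $\ca$ inside the larger matrix algebras $M_{kn}(\ca)$. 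Injectivity is immediate because $\simMvN$ and padding by zeros are unaffected by the identification; surjectivity follows from cofinality of $\{\,kn : k\in\NN\,\}$ in $\NN$, so that every class in $V_0(M_n(\ca))$---represented by a projection in some $M_{kn}(\ca)$---already occurs in $V_0(\ca)$. Hence the induced map on $V_0$ is a monoid isomorphism and $\eta_\ca$ is a group isomorphism.

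For a general (possibly non-unital) $\ca$, the functor $K_0$ is defined via unitalisation, $K_0(\ca)=\ker K_0(\pi_\ca)$ inside $K_0(\ca^+)$, and $\eta_\ca$ is the restriction of $K_0(\lambda_\ca^+)$ for $\fdec{\lambda_\ca^+}{\ca^+}{M_n(\ca)^+}$. Since the augmentations $\pi$ constitute a natural transformation $(-)^+\Rightarrow\mathrm{const}_\C$, the square relating $\pi_\ca$, $\pi_{M_n(\ca)}$ and $\lambda_\ca^+$ commutes, so it is enough to show $K_0(\lambda_\ca^+)$ is an isomorphism; it then restricts to an isomorphism of kernels. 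The obstacle is that $M_n$ does not commute with unitalisation on the nose: $M_n(\ca)^+\neq M_n(\ca^+)$. I would bridge this via the unital inclusion $\fdec{j}{M_n(\ca)^+}{M_n(\ca^+)}$ carrying the adjoined unit to $\mathrm{diag}(1_{\ca^+},\ldots,1_{\ca^+})$: comparing the two split-exact unitalisation sequences for $M_n(\ca)$, with $j$ covering the corner inclusion $\C\to M_n(\C)$ (a $K_0$-isomorphism), split-exactness of $K_0$ and the short five lemma show $K_0(j)$ is an isomorphism. Finally, $j\circ\lambda_\ca^+$ decomposes as a block sum $\mu_{\ca^+}\oplus(\sigma\circ\pi_\ca)$, where $\fdec{\mu_{\ca^+}}{\ca^+}{M_n(\ca^+)}$ is the honest corner embedding and the second summand factors through $\pi_\ca$; by additivity of $K_0$ on orthogonal block sums the second term vanishes upon restriction to $\ker K_0(\pi_\ca)$, so $K_0(j)\circ\eta_\ca$ equals $K_0(\mu_{\ca^+})$ restricted to $K_0(\ca)$. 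As $K_0(\mu_{\ca^+})$ is an isomorphism by the unital case and $K_0(j)$ is an isomorphism, $\eta_\ca$ is too.

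The statement for $K_1$ reduces to that for $K_0$: since $K_1=K_0\circ S$ and the suspension satisfies $S(M_n(\ca))=M_n(\ca)\otimes C_0(\R)\cong M_n(\ca\otimes C_0(\R))=M_n(S(\ca))$ naturally in $\ca$, one obtains $K_1\circ M_n = K_0\circ S\circ M_n\simeq K_0\circ M_n\circ S\simeq K_0\circ S = K_1$, the middle isomorphism being the $K_0$ matrix stability just established (whose naturality is what makes the composite natural). I expect the genuine difficulty to reside entirely in the non-unital reduction of the third paragraph---the careful bookkeeping around the failure of $M_n$ to commute with unitalisation---while the unital cofinality argument and the suspension step are routine.
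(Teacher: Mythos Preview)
The paper does not prove this theorem. It appears in Appendix~B as a piece of standard background, stated without proof and attributed to the textbook references \cite{rordam,wegge1993k,fillmore}. Your argument is a correct version of the standard textbook proof: the corner embedding together with the cofinality of $\{kn:k\in\NN\}$ handles the unital case, the comparison of the two unitalisations via split-exactness and the five lemma handles the non-unital reduction, and the compatibility $S\circ M_n\simeq M_n\circ S$ transfers everything to $K_1$. There is thus no proof in the paper to compare against; your contribution simply supplies what the paper leaves to the literature.

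One small point worth tightening in your write-up: when you write ``$K_0(j)\circ\eta_\ca$ equals $K_0(\mu_{\ca^+})$ restricted to $K_0(\ca)$'', the two sides land in different copies of $K_0(M_n(\ca))$---one sitting inside $K_0(M_n(\ca)^+)$, the other inside $K_0(M_n(\ca^+))$---and it is the ladder of split-exact sequences (with $j$ covering the corner inclusion $\C\hookrightarrow M_n(\C)$) that identifies them. You clearly have this in mind, but making the identification explicit would remove any ambiguity about where the five lemma is being invoked and why $K_0(\mu_{\ca^+})$ restricts to an isomorphism between the appropriate kernels.
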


\begin{theorem}
Operator $K$-theory is continuous.  That is, if $$\ca_1 \longrightarrow \ca_2 \longrightarrow \ca_3 \longrightarrow \cdots$$ is a direct sequence of \cstar s and $*$-homomorphisms, $$K_0(\ca_{1}) \longrightarrow K_{0}(\ca_2) \longrightarrow K_0(\ca_3) \longrightarrow \cdots$$ is its image under the $K_0$-functor, and $\ca = \colim \,\ca_n$, then,
$$K_0(\ca) \simeq \colim \, K_0(\ca_n)$$
via the obvious homomorphism induced between cones.  A similar statement holds for $K_1$.
\end{theorem}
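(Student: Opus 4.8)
The plan is to reduce the statement to the continuity of the Murray--von Neumann semigroup functor $V_0$ on unital \cstar s and then to propagate it through the remaining functorial constructions. Since $\fdec{\cg}{\cat{AbMon}}{\cat{Ab}}$ is the group completion functor, it is a left adjoint and hence preserves all colimits; so $K_0 = \cg \circ V_0$ inherits continuity from $V_0$ on $\cat{uC^*}$. For the nonunital case I would use that unitalisation commutes with direct limits: as every $\phi_n^+$ fixes the adjoined copy of $\C$, one has $(\colim \ca_n)^+ \simeq \colim(\ca_n^+)$ compatibly with the projections $\pi$ onto $\C$. Combined with the fact that filtered colimits are exact in $\cat{Ab}$ (so that kernels commute with the colimit), the defining description $K_0(\ca) = \ker K_0(\pi)$ then reduces the nonunital statement to the unital one. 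Finally, the $K_1$ assertion follows from $K_1 = K_0 \circ S$, once one observes that the suspension $S$ commutes with direct limits, being a minimal tensor product with the fixed nuclear algebra $C_0(\R)$.

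The technical heart is to show that the canonical monoid homomorphism $\fdec{\Psi}{\colim V_0(\ca_n)}{V_0(\ca)}$ induced by the cone maps $\fdec{\phi_{n,\infty}}{\ca_n}{\ca}$ is an isomorphism, where $\ca = \colim \ca_n$. The starting point is that $\bigcup_n \phi_{n,\infty}(\ca_n)$ is norm-dense in $\ca$, so that $\bigcup_n M_k(\phi_{n,\infty})\bigl(M_k(\ca_n)\bigr)$ is norm-dense in every $M_k(\ca)$. I would then establish two approximation lemmas. \emph{Lifting of projections}: every projection $p \in M_k(\ca)$ is unitarily equivalent to the image of a projection arising from some $M_k(\ca_n)$. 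One approximates $p$ by a self-adjoint $a \in M_k(\ca_n)$ with $\|a - p\|$ small; such an $a$ is a near-idempotent whose spectrum avoids a neighbourhood of $1/2$, so continuous functional calculus with a suitable $\{0,1\}$-valued function rounds $a$ to a genuine projection $p'$ satisfying $\|p' - p\| < 1$, whence $p' \simu p$ and in particular $p' \simMvN p$. \emph{Descent of equivalence}: if projections $p, q \in M_k(\ca_n)$ have images in $M_k(\ca)$ that are Murray--von Neumann equivalent, then their images are already equivalent in $M_k(\ca_m)$ for some $m \geq n$. One approximates the witnessing partial isometry $v \in M_k(\ca)$, with $v^*v$ and $vv^*$ the images of $p$ and $q$, by an element $w \in M_k(\ca_m)$; then $w$ is an approximate partial isometry, and cutting it down by the images of $p$ and $q$ and applying a polar-decomposition correction produces a genuine partial isometry in $M_k(\ca_m)$ implementing the equivalence.

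With these lemmas in hand, surjectivity of $\Psi$ is immediate from lifting of projections, while injectivity follows from descent of equivalence: any equality of classes holding in $V_0(\ca)$ already holds at a finite stage, which is exactly the identification relation defining the colimit monoid. Preservation of the block-diagonal addition is routine, so $\Psi$ is an isomorphism of abelian monoids and $V_0$ is continuous on $\cat{uC^*}$. Composing with $\cg$ and running the unitalisation and suspension reductions of the first paragraph then yields the full statement for $K_0$ and for $K_1$. I expect the main obstacle to be the \emph{descent of equivalence} step: one must track the perturbation bounds carefully so that the approximate partial isometry $w$ is corrected to an \emph{exact} one within a single finite stage $\ca_m$ rather than only in the limit, which is precisely where the finitary character of Murray--von Neumann equivalence is exploited; by comparison, the rounding argument underlying the lifting of projections is standard.
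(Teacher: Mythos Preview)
The paper does not give its own proof of this theorem: it is stated in Appendix~\ref{app:ktheory} as part of the background material on operator $K$-theory, with the remark that ``these are basic facts found in any introduction to the subject, e.g.\ \cite{rordam,wegge1993k,fillmore}''. So there is no proof in the paper to compare against.

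That said, your sketch follows the standard textbook argument (essentially the one in \cite{rordam}) and is sound in outline. The reduction via the left adjoint $\cg$, the compatibility of unitalisation with direct limits together with exactness of filtered colimits in $\cat{Ab}$, and the use of $S(-) \simeq - \otimes C_0(\R)$ for $K_1$ are all correct. The two approximation lemmas you isolate---lifting of projections by spectral rounding and descent of Murray--von Neumann equivalence by perturbing a partial isometry---are exactly the ingredients the standard proofs use. One small caveat: in your reduction to the unital case you are implicitly assuming the connecting maps become unital after unitalisation, which they do; but if you wanted to argue continuity of $V_0$ directly on $\cat{uC^*}$ you should note that the colimit of a sequence of unital algebras with unital maps is again unital, otherwise the reduction via $(-)^+$ is the clean way to handle the general statement as you have done. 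Your flagged concern about the descent step is the right place to be careful, but the functional-calculus corrections needed there are entirely routine once the approximating element is chosen close enough.
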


As a consequence of the preceding two theorems, and the alternative definition of the compact operators as the limit of a direct sequence of matrix algebras, we obtain: 
\begin{theorem}
Operator $K$-theory is stable.  That is, $K_0 \simeq K_0 \circ \ck$ and $K_1 \simeq K_1 \circ \ck$.
\end{theorem}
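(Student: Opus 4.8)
The plan is to read off stability directly from the two theorems stated immediately above—matrix stability and continuity of $K_0$—together with the presentation of the stabilisation $\ck(\ca) = \ca \otimes \ck$ as a direct limit of matrix algebras. First I would invoke this alternative description: for any \cstar\ $\ca$,
\[ \ck(\ca) \;=\; \colim_n M_n(\ca) \Mcomma \]
the colimit in $\cat{C^*}$ of the sequence $M_1(\ca) \hookrightarrow M_2(\ca) \hookrightarrow \cdots$ along the upper-left corner inclusions $\fdec{j_n}{M_n(\ca)}{M_{n+1}(\ca)}$. Applying continuity of $K_0$ to this direct sequence yields
\[ K_0(\ck(\ca)) \;=\; K_0\bigl(\colim_n M_n(\ca)\bigr) \;\simeq\; \colim_n K_0(M_n(\ca)) \Mcomma \]
so the task reduces to computing the colimit of the direct system $\bigl(K_0(M_n(\ca)), K_0(j_n)\bigr)_n$.

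Next I would use matrix stability. For each $n$, the corner embedding $\fdec{\iota_n}{\ca}{M_n(\ca)}$, $a \longmapsto \mathrm{diag}(a,0,\ldots,0)$, induces an isomorphism $\fdec{K_0(\iota_n)}{K_0(\ca)}{K_0(M_n(\ca))}$. The key point—on which the whole argument turns—is that these isomorphisms are compatible with the connecting maps. Indeed, the two corner embeddings commute with the corner inclusion, $j_n \circ \iota_n = \iota_{n+1}$, since both send $a$ into the top-left entry; functoriality of $K_0$ then gives $K_0(j_n) \circ K_0(\iota_n) = K_0(\iota_{n+1})$. Thus, after identifying every $K_0(M_n(\ca))$ with $K_0(\ca)$ via $K_0(\iota_n)$, each connecting map $K_0(j_n)$ becomes the identity on $K_0(\ca)$.

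Consequently the direct system $\bigl(K_0(M_n(\ca)), K_0(j_n)\bigr)$ is isomorphic to the constant system $(K_0(\ca), \mathrm{id})$, whose colimit is simply $K_0(\ca)$. Composing the isomorphisms gives $K_0(\ck(\ca)) \simeq K_0(\ca)$. Since the corner embeddings, the matrix functors, and the continuity isomorphism are all natural in $\ca$, these isomorphisms assemble into a natural isomorphism $K_0 \simeq K_0 \circ \ck$ of functors $\cat{C^*} \longrightarrow \cat{Ab}$. The argument for $K_1$ is verbatim identical, replacing the $K_0$ versions of matrix stability and continuity by their $K_1$ counterparts.

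The main obstacle—indeed essentially the only substantive step—is the compatibility established in the second paragraph: one must verify that the matrix-stability isomorphisms intertwine the trivial constant system with the genuine connecting maps $K_0(j_n)$, so that the colimit collapses to $K_0(\ca)$ rather than to some strictly larger group. This rests entirely on the elementary identity $j_n \circ \iota_n = \iota_{n+1}$ of corner maps.
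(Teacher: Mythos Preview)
Your proposal is correct and matches the paper's approach exactly: the paper does not give a detailed proof but simply states that stability follows as a consequence of the two preceding theorems (matrix stability and continuity) together with the direct-limit description of $\ck(\ca)$, and you have accurately spelled out how these ingredients combine.
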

 
Consequently, the operator $K$-theory functors are determined by their restrictions to stable \cstar s.

\subsection{Alternative definition of  operator $K_0$-functor}
For a unital \cstar\ $\ca$, the Murray--von Neumann semigroup, and thus the $K_0$-group, can be expressed in a rather simple fashion in terms of projections of its stabilisation~\cite[Exercise 6.6]{rordam}.
We require this definition in the proof of Theorem~\ref{thm:Ktheory} and thus describe it in explicit detail.

\begin{definition}
Two projections $p$ and $q$ in a \cstar\ $\ca$ are \emph{unitarily equivalent}, denoted by $p \simu q$, whenever there is a unitary $u \in \ca^+$ such that $p = uqu^*$.
We write $[p]$ for the unitary equivalence class of $p$.
\end{definition}

Given projections $p_1, \ldots, p_k \in \ca \otimes \ck$,
one can find pairwise orthogonal representatives of their unitary equivalence classes,
i.e. there exist projections $q_1, \ldots q_k \in \ca \otimes \ck$ such that $p_i \simu q_i$ ($i \in \{1, \ldots,n\})$ and all the $q_i$ are pairwise orthogonal \cite[Exercise 6.6]{rordam}. 

The Murray--von Neumann semigroup for unital \cstar s admits the following alternative characterisation:
\begin{definition}[The Murray--von Neumann semigroup for unital $\ca$, alternative definition]
Let $\ca$ be a unital \cstar.
The elements of $V_0(\ca)$ are the unitary equivalence classes of projections in $\ck(\ca)$.
The abelian addition operation is given by orthogonal addition. That is, if $p$ and $p'$ are two  projections in $\ca \otimes \ck$, then
$[p] + [p'] = [q + q']$
where $q$ and $q'$ are orthogonal representatives of $[p]$ and $[p']$, respectively (i.e. $p \simu q$, $p' \simu q'$, and $q \perp q'$).
The equivalence class of the zero projection is a neutral element for this operation, making $V_0(\ca)$ an abelian monoid.
A unital $*$-homomorphism $\fdec{\phi}{\ca}{\cb}$ yields a monoid homomorphism by  $\fdec{V_0(\phi)}{V_0(\ca)}{V_0(\cb)}$ by $[p] \longmapsto [\ck(\phi)(P)]$.
This defines a functor $\fdec{V_0}{\cat{uC^*}}{\cat{AbMon}}$.
\end{definition}

Through this reformulation of the Murray--von Neumann semigroup functor $V_0$, we automatically get a new description of $K_0$ by composition with the Grothendieck group functor, as $K_0 = \cg \circ V_0$.  Then, $K_0(\ca)$ is simply the collection of formal differences $$[p] - [q]$$ of elements of $V_0(\ca)$ with $$[p] - [q] = [p'] - [q']$$ precisely when there exists $[r]$ such that $$[p] + [q'] + [r] = [p'] + [q] + [r]\Mdot$$

Composing the action on morphisms of the Grothendieck group functor after the action of $V_0$ just defined, we find that a unital $*$-homomorphism $\fdec{\phi}{\ca}{\cb}$ between unital \cstar s yields an abelian group homomorphism between the $K_0$ groups of $\ca$ and $\cb$ given by 
$$[p] - [q] \longmapsto [\phi(p)] - [\phi(q)] \Mdot$$

\section{Ideals of operator algebras}\label{app:ideals}

\subsection{The primitive ideal space}

Here, we include some basic facts on the prime ideal spectrum of rings and on its $C^*$-algebraic analogue, the primitive ideal space.
These are required for our explication of the motivation for considering the extension of the closed-set lattice functor.

\subsubsection{The spectrum of commutative rings}

In commutative ring theory and algebraic geometry, the starting point for the application of geometrical methods is the association of topological spaces to rings \cite[p. 70]{hartshorne}. These are, in fact, locally ringed spaces; however, we will not be considering this additional structure.

\begin{definition}
A \emph{prime ideal} $J$  of a commutative ring $R$ is a ideal $J \subsetneq R$ such that whenever we have $a, b \in R$ such that $ab \in J$ then either $a \in J$ or $b \in J$.
\end{definition}
The canonical examples of prime ideals come from the ideals of the ring of integers generated by prime numbers.

\begin{definition}
Let $R$ be a commutative ring and let $I \subset R$ be a two-sided ideal of $R$.  Then $\mathrm{hull}(I)$ is the set of prime ideals that contain $I$.
\end{definition}

\begin{definition}
Let $R$ be a commutative ring.  The contravariant \emph{spectrum functor} $\fdec{\Spec}{\cat{Rng}}{\cat{Top}}$ from the category of rings and ring homomorphisms to the category of topological spaces and continuous maps is defined as follows.

Given a ring $R$, $\Spec(R)$ is the set of prime ideals of $R$, equipped with the \emph{hull-kernel (or Zariski, or Jacobson) topology}, whose closed sets are of the form $\mathrm{hull}(I)$ for some two-sided ideal $I \subset R$.

The $\Spec$ functor sends a ring morphism $\fdec{h}{R}{S}$ to the continuous map $\fdec{\Spec(h)}{\Spec(S)}{\Spec(R)}$  that maps a prime ideal $J$ to its preimage $h^{-1}(J)$ under $h$.
\end{definition}


\subsubsection{The primitive ideal space}
These definitions and theorems can be found in \cite[p. 208]{AlfsenShultz1} and \cite[p. 118]{Blackadar}.

\begin{definition}
A \emph{primitive ideal} $J$ of a \cstar\ $\ca$ is an ideal that is the kernel of an irreducible representation of $\ca$.
\end{definition}

Recall that an irreducible representation of a \cstar\ $\ca$ is a $*$-representation $\fdec{\pi}{\ca}{\cb(\ch)}$ such that no nontrivial closed subspaces $S \subset \ch$ satisfy $\pi(a)S \subset S$ for all $a \in \ca$.
Every pure state of $\ca$ gives rise to an irreducible representation $\ca$ by the \Gelfand--\Naimark--Segal construction.

\begin{definition}
Let $\ca$ be a \cstar\ and let $I \subset \ca$ be a closed, two-sided ideal of $\ca$.  Then $\mathrm{hull}(I)$ is the set of primitive ideals containing $I$.
\end{definition}

\begin{definition}
Let $\ca$ be a \cstar.  The \emph{primitive ideal space} $\Prim(\ca)$ is the set of the primitive ideals of $\ca$, equipped with the \emph{hull-kernel (or Zariski, or Jacobson) topology} whose closed sets are of the form $\mathrm{hull}(I)$ for some closed, two-sided ideal $I \subset \ca$.
\end{definition}


\begin{theorem}The map $\mathrm{hull}$ is an order preserving bijection between the set of two-sided ideals of a \cstar\ $\ca$ and the closed sets of the hull-kernel topology on $\Prim(\ca)$.
\end{theorem}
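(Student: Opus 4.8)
The plan is to exhibit an explicit two-sided inverse to $\mathrm{hull}$, namely the \emph{kernel} map $\ker$ sending a set $S$ of primitive ideals to their intersection $\ker(S) = \bigcap_{J \in S} J$. Since an intersection of closed, two-sided ideals is again a closed, two-sided ideal, $\ker$ lands in the poset of closed, two-sided ideals of $\ca$. First I would record the easy structural facts: both $\mathrm{hull}$ and $\ker$ reverse inclusions (if $I_1 \subseteq I_2$ then every primitive ideal containing $I_2$ also contains $I_1$, so $\mathrm{hull}(I_2) \subseteq \mathrm{hull}(I_1)$, and dually for $\ker$); moreover $\mathrm{hull}(I)$ is by definition a closed subset of $\Prim(\ca)$, and every closed set arises as some $\mathrm{hull}(I_0)$, again by definition of the hull--kernel topology. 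Thus the only substantive content is that $\mathrm{hull}$ and $\ker$ are mutually inverse.

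The crux is the identity $\ker(\mathrm{hull}(I)) = I$ for every closed, two-sided ideal $I$; equivalently, that each such $I$ is the intersection of the primitive ideals containing it. To prove this I would pass to the quotient \cstar\ $\ca/I$. There is a standard inclusion-preserving bijection between closed, two-sided ideals of $\ca$ containing $I$ and closed, two-sided ideals of $\ca/I$, which restricts to a bijection between primitive ideals of $\ca$ containing $I$ and primitive ideals of $\ca/I$: an irreducible representation of $\ca/I$ pulls back along the quotient map $\fdec{q}{\ca}{\ca/I}$ to an irreducible representation of $\ca$ annihilating $I$, and conversely, so that the primitive ideals of $\ca$ containing $I$ are exactly the preimages $q^{-1}(P)$ of the primitive ideals $P$ of $\ca/I$. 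Since preimages commute with intersections, $\ker(\mathrm{hull}(I)) = q^{-1}\!\big(\bigcap \{P : P \text{ primitive in } \ca/I\}\big)$, and it suffices to show that the intersection of all primitive ideals of the \cstar\ $\ca/I$ --- its Jacobson radical --- is zero.

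The vanishing of the Jacobson radical is where the $C^*$-structure does the real work, and is the main obstacle. I would argue it from the abundance of irreducible representations: for any nonzero $b$ in a \cstar\ $\cb$, the element $b^*b$ is nonzero and positive, so there is a pure state $\rho$ with $\rho(b^*b) > 0$ (existence of such a state being a consequence of the Hahn--Banach theorem, which produces a state detecting $b^*b$, together with the Krein--Milman theorem, which lets one take it to be pure). Its GNS representation $\pi_\rho$ is irreducible, and $\rho(b^*b) = \langle \pi_\rho(b)\xi_\rho, \pi_\rho(b)\xi_\rho\rangle > 0$ forces $b \notin \ker \pi_\rho$. Hence no nonzero element lies in every primitive ideal, i.e. $\bigcap\{P : P \text{ primitive}\} = 0$. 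Applying this to $\cb = \ca/I$ yields $\ker(\mathrm{hull}(I)) = q^{-1}(0) = I$.

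It then remains to check the reverse composite $\mathrm{hull}(\ker(S)) = S$ for every closed set $S$. The inclusion $S \subseteq \mathrm{hull}(\ker(S))$ is immediate, since each $J \in S$ contains $\bigcap_{J' \in S} J' = \ker(S)$. For the converse, writing $S = \mathrm{hull}(I_0)$ for some closed, two-sided ideal $I_0$ (possible because $S$ is closed), the identity just proved gives $\ker(S) = \ker(\mathrm{hull}(I_0)) = I_0$, whence $\mathrm{hull}(\ker(S)) = \mathrm{hull}(I_0) = S$. Combining the two composites shows that $\mathrm{hull}$ and $\ker$ are mutually inverse; as both reverse inclusion, they form an anti-isomorphism of posets, which is precisely an order isomorphism once the closed sets (or, symmetrically, the ideals) are ordered by reverse inclusion, matching the convention under which the statement is phrased.
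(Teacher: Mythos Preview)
Your proof is correct and is the standard textbook argument (via semisimplicity of $C^*$-algebras, established through GNS from pure states). Note, however, that the paper does not supply its own proof of this statement: it appears in the appendix as a background fact, with the surrounding text pointing to \cite[p.~208]{AlfsenShultz1} and \cite[p.~118]{Blackadar} for proofs. So there is no in-paper argument to compare against; your write-up simply fills in what the paper defers to the literature, and does so along the same lines one finds there.

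One small remark: the paper's statement says ``two-sided ideals'' without the qualifier ``closed'', whereas your argument (correctly) works throughout with \emph{closed} two-sided ideals---as it must, since $\ker(S)$ is automatically closed and the identity $\ker(\mathrm{hull}(I))=I$ fails for non-closed $I$. This is almost certainly an omission in the paper's phrasing (the surrounding definitions all use closed ideals), but it is worth flagging if you intend to present the proof.
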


\begin{definition}The \emph{spectrum} $\hat \ca$ of a \cstar\ $\ca$ is the set of unitary equivalence classes of irreducible representations of $\ca$.
It is equipped with the coarsest topology with respect to which the map $[\pi] \longmapsto \ker\pi$ is continuous.

\end{definition}

The topology on $\hat \ca$ is thus also order isomorphic to the partially ordered set of two-sided ideals of $\ca$.

\subsection{Von Neumann algebras}

We briefly outline some required elementary facts about von Neumann algebras \cite[Chapter 3]{AlfsenShultz1}.

\begin{definition}  A \emph{von Neumann algebra} $\ca$ is a $*$-subalgebra of $\cb(\ch)$, for some Hilbert space $\ch$, which is closed in the weak (operator) topology.
\end{definition}

Recall that a net of operators $(T_\alpha)$ in $\cb(\ch)$ converges to $T$ in the \emph{weak  topology} if and only if, for every vector $v \in \ch$ and linear functional $\phi \in \ch^*$, we have that  $(\phi(T_\alpha(v)))$ converges to $\phi(T(v))$.  As convergence of a net of operators in norm  implies its weak convergence, we see that von Neumann algebras are examples of \cstar s.  We may equally well have defined von Neumann algebras to be $*$-subalgebras of $\cb(\ch)$ which are closed in the strong, ultraweak, or ultrastrong topologies as the closures of $*$-subalgebras of $\cb(\ch)$ in these topologies all coincide.  Von Neumann proved that taking any of these closures of unital $*$-subalgebras of $\cb(\ch)$ coincides also with taking the double commutant (though he did not know of the ultrastrong topology). 

We will primarily require facts about projections and ideals of von Neumann algebras and the relationship between the two notions.

\subsubsection{Projections}
The \emph{projections} of $\ca$ are operators $p$ such that $p = p^* = p^2$. They are orthogonal projections onto closed subspaces of $\ch$.  This yields a natural partial order on projections induced  by the inclusion relation on their corresponding subspaces.  Alternatively, this order can be defined by:
$$ p \leq q \Miff pq = p \Miff qp = p \Mdot$$

  We denote the partially ordered set of projections in $\ca$ by $\cp(\ca)$.  In von Neumann algebras, the collection of projections forms a complete lattice: the infimum $\inf_\alpha p_\alpha$ of an arbitrary collection of projections $\{p_\alpha\}_\alpha$ is given by the orthogonal projection onto $\bigcap_\alpha p_\alpha \ch$ whereas the supremum $\sup_\alpha p_\alpha$ is the orthogonal projection onto the closed linear span of $\bigcup_\alpha p_\alpha \ch$.
 The orthogonal complement map $(-)^\perp$ that sends $p$ to $1-p$ makes this lattice complemented in the sense that $p \vee p^\perp = 1$, $p \wedge p^\perp = 0$, and $p^{\perp \perp} = p$.

The set of projections in $\ca$ is also equipped with several other preorders which arise from the canonical partial order and certain compatible equivalence relations.  We will require, in particular, the notions of Murray--von Neumann equivalence of projections and unitary equivalence of projections.

The intuition behind Murray--von Neumann equivalence is to identify projections whose corresponding image subspaces are of the same dimension.  That is, there should be an operator $v \in \ca$ mapping the Hilbert space $\ch$ to itself which isometrically maps the subspace of one projection to the subspace of another, thereby witnessing the equality of their dimension.

\begin{definition}Two projections $p$ and $q$ in a von Neumann algebra $\ca$ are \emph{Murray--von Neumann equivalent}, denoted $p \simMvN q$, if and only if there exists $v \in \ca$ such that $$p = v^*v \text{ and } q=vv^*\Mdot  $$\end{definition}

The partial order on $\cp(\ca)$ induces a partial order on the set of Murray--von Neumann equivalence classes of projections.  We write $p \leqMvN q$ to denote that $p \simMvN q'$ for some $q' \leq q$.

\begin{definition}Two projections $p$ and $q$ in a von Neumann algebra $\ca$ are \emph{unitarily equivalent}, denoted $p \simu q$, if and only if there exists  a unitary element $u \in \ca$ such that $p = uqu^*$.\end{definition}

Similarly, the partial order on $\cp(\ca)$ induces a partial order on the set of unitary equivalence classes of projections.  We write $p \lequ q$ to denote that $p \simu q'$ for some $q' \leq q$.

Unitary equivalence (resp. ordering)
implies Murray--von Neumann equivalence (resp. ordering) for arbitrary pairs of projections.
We will require the following partial converse for orthogonal projections.
\begin{proposition}\label{prop:mvNimpliesunitary}
Let $p$ and $q$ be projections in a von Neumann algebra.
If $p$ and $q$ are orthogonal, then $p \simMvN q$ iff $p \simu q$, and, moreover, $p \leqMvN q$ iff $p \lequ q$.
\end{proposition}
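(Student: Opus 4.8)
The plan is to note first that the ``$\Leftarrow$'' implications in both biconditionals are precisely the general facts recorded just before the statement, namely that unitary equivalence (respectively, the unitary ordering $\lequ$) always entails Murray--von Neumann equivalence (respectively $\leqMvN$). Hence only the two ``$\Rightarrow$'' directions require work, and everything reduces to the equivalence case $p \simMvN q \Rightarrow p \simu q$; the ordering statement will then follow by a one-line reduction.

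For the equivalence case I would suppose $p \perp q$ and $p \simMvN q$, so that there is a partial isometry $v \in \ca$ with $v^*v = p$ and $vv^* = q$; equivalently $v = vp = qv$ and $v^* = pv^* = v^*q$. The key idea is to exhibit a single \emph{exchange symmetry} $u = v + v^* + (1-p-q)$ implementing the swap of the two orthogonal ranges. First I would extract the algebraic consequences of orthogonality $pq = 0$: writing $v = vp = qv$ gives $v^2 = (vp)(qv) = v(pq)v = 0$ and, symmetrically, $(v^*)^2 = 0$; moreover $pv = (pq)v = 0$ and $vq = v(pq) = 0$, so, setting $r := 1-p-q$, one checks that $vr = rv = v^*r = rv^* = 0$. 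Since $r$ is a projection orthogonal to both $p$ and $q$, expanding $u^2$ leaves only the surviving terms $vv^* = q$, $v^*v = p$ and $r^2 = r$, whence $u^2 = p + q + r = 1$; as $u = u^*$, this shows $u$ is a self-adjoint unitary. Finally one computes $qu = v$ (the remaining summands being annihilated by $q$), so $uqu^* = uqu = uv = v^*v = p$, giving $p \simu q$.

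The ordering case is then immediate: if $p \perp q$ and $p \leqMvN q$, I would pick a projection $q' \leq q$ with $p \simMvN q'$. Because $q' \leq q$ forces $qq' = q'$, orthogonality is inherited, $pq' = pqq' = 0$, so $p \perp q'$; the equivalence case just proved yields $p \simu q'$ with $q' \leq q$, i.e. $p \lequ q$.

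The only real subtlety---and the point where orthogonality is indispensable---is the vanishing $v^2 = (v^*)^2 = 0$ together with the vanishing of all cross terms involving $r$. This is exactly what makes the naive candidate $v + v^*$ (padded by $1-p-q$ so as to act as the identity off the two ranges) already a unitary; for general, non-orthogonal $p$ and $q$ one would additionally need $1-p \simMvN 1-q$, and no such hypothesis is available here. I expect the remaining verifications to be entirely routine bookkeeping with the relations $v = vp = qv$ and $v^* = pv^* = v^*q$.
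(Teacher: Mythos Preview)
Your argument is correct. The explicit exchange unitary $u = v + v^* + (1-p-q)$ is the standard construction, and your verification of the identities $v^2 = 0$, $vr = rv = 0$, etc., is accurate; the reduction of the ordering statement to the equivalence statement via $pq' = p(qq') = (pq)q' = 0$ is exactly right.

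As for comparison with the paper: the paper does not actually prove the equivalence part but simply cites \cite[Proposition~6.38]{AlfsenShultz1}; it then derives the ordering statement from it by precisely the same one-line reduction you give. So your treatment of the ordering case is identical to the paper's, while for the equivalence case you supply a self-contained elementary proof where the paper defers to the literature. What your approach buys is independence from the reference and an explicit witnessing unitary; what the paper's approach buys is brevity.
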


For a proof of the statement concerning equivalences, see \cite[Proposition 6.38]{AlfsenShultz1}.
The second statement is an easy consequence of this:
$p \leqMvN q$ means that $p \simMvN q'$ for some $q' \leq q$;
but if $p$ and $q$ are orthogonal then so are $p$ and $q'$;
and so by the first statement one obtains $p \simu q'$, meaning that  $p \lequ q$.

\begin{definition}
The central carrier $C(p)$ of a projection $p \in \ca$ is the smallest central projection above $p$:
$$C(p) = \inf \setdef{z \in \cp(\ca)\cap \cz(\ca)}{p \leq z}\Mdot$$ \end{definition}
It is immediate from this definition that a projection $p$ and a unitary rotation $upu^*$ have the same central carrier for $p \leq z$ if and only if $upu^* \leq uzu^* = zuu^* = z$.  It is also immediate that if $S \subset \cp(\ca)$ is a set of projections, then $C(\sup S) = \sup_{p \in S}C(p)$.

One of the basic technical tools we will require is the comparison theorem of projections in a von Neumann algebra \cite{KadisonRingrose1}.  The intuitive idea is best understood in a factor (a von Neumann algebra with trivial centre) which can be thought of as an elementary direct summand.  Here, the dimension of two projections can be compared; either they are of equal dimension, or the dimension of one exceeds the dimension of the other.     

\begin{theorem}[Comparison theorem] \label{comparison}
Let $p$ and $q$ be projections in a von Neumann algebra $\ca$.  There exists a central projection $z$ in $\ca$ such that \[zp \geqMvN  zq \;\;\text{ and }\;\; z^\perp p \leqMvN z^\perp q \Mdot\]
\end{theorem}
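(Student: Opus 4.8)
The plan is to prove the comparison theorem by the classical Murray--von Neumann exhaustion argument: match as large a subprojection of $p$ as possible with a subprojection of $q$ up to Murray--von Neumann equivalence, and then control the two residual pieces through their central carriers. First I would consider all families $\{(e_i, f_i)\}_{i \in I}$ of pairs of projections in $\ca$ with $e_i \leq p$, $f_i \leq q$, $e_i \simMvN f_i$, the $\{e_i\}$ pairwise orthogonal, and the $\{f_i\}$ pairwise orthogonal. Ordered by inclusion, every chain has its union as an upper bound (two pairs from a chain lie in a common member and so retain the required orthogonality), so Zorn's lemma furnishes a maximal such family. Writing $e = \sup_i e_i$ and $f = \sup_i f_i$, the pairwise orthogonality lets me assemble the witnessing partial isometries $v_i$ (with $v_i^* v_i = e_i$ and $v_i v_i^* = f_i$) into a single partial isometry $v = \sum_i v_i$ with $v^* v = e$ and $v v^* = f$. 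Thus $e \simMvN f$ with $e \leq p$ and $f \leq q$, and I set $p' = p - e$ and $q' = q - f$.

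The crux is that maximality forces the central carriers $C(p')$ and $C(q')$ to be orthogonal. Here I would invoke the fact that for projections $r, s$ one has $C(r) C(s) = 0$ if and only if $r \ca s = \{0\}$: since $C(r)$ is the projection onto $\overline{\ca r \ch}$, the vanishing $r \ca s = \{0\}$ is equivalent to $\overline{\ca r \ch} \perp \overline{\ca s \ch}$. Contrapositively, if $C(p') C(q') \neq 0$ then $q' \ca p' \neq \{0\}$, so there is a nonzero $x = q' x p'$; its polar decomposition $x = w \lvert x \rvert$ produces a partial isometry $w \in \ca$ whose source projection $w^* w \leq p'$ and range projection $w w^* \leq q'$ are nonzero and satisfy $w^* w \simMvN w w^*$. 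Adjoining this pair to the family would contradict maximality, so $C(p') C(q') = 0$.

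With the central carriers orthogonal I would take $z = 1 - C(q')$, which is central. Since $q' \leq C(q')$ we get $z q' = 0$, hence $z q = z f$; multiplying the equivalence $e \simMvN f$ by the central projection $z$ (so that $z v$ witnesses $z e \simMvN z f$) gives $z q = z f \simMvN z e \leq z p$, whence $z p \geqMvN z q$. For the complementary piece, $C(p') C(q') = 0$ together with $p' \leq C(p')$ yields $z^\perp p' = C(q') p' \leq C(q') C(p') = 0$, so $z^\perp p = z^\perp e \simMvN z^\perp f \leq z^\perp q$, giving $z^\perp p \leqMvN z^\perp q$. This is precisely the asserted conclusion.

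The main obstacle is the central-carrier lemma used in the middle step: establishing $C(r) C(s) = 0 \iff r \ca s = \{0\}$ and extracting Murray--von Neumann equivalent subprojections from a nonzero element of $q' \ca p'$ via polar decomposition. This is where the full von Neumann algebra structure is genuinely required --- the double commutant theorem guarantees that the partial isometry arising in the polar decomposition again lies in $\ca$, and the identification of the central carrier with the projection onto $\overline{\ca r \ch}$ is what links the algebraic vanishing $r \ca s = \{0\}$ to orthogonality of carriers. Everything else is order-theoretic bookkeeping with $\simMvN$, $\leqMvN$, and central projections.
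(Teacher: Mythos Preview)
Your argument is the standard Murray--von Neumann exhaustion proof and is correct. Note, however, that the paper does not actually prove this theorem: it is stated in the appendix as a background fact with a citation to Kadison--Ringrose, where essentially the same argument you have written appears. So there is nothing to compare against beyond observing that your proof matches the classical one the paper invokes.
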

 
\subsubsection{Ideals}

Ideals of operator algebras must satisfy both the usual algebraic conditions as well as an additional topological condition.  

It turns out that the appropriate notion of morphism for von Neumann algebras is not weakly continuous $*$-homomorphism but rather ultraweakly continuous $*$-homomorphism. 
 The ultraweak topology is stronger than the weak topology.

\begin{definition} \label{idealdef}
A \emph{left (resp. right) ideal} $I$ of a von Neumann algebra $\ca$ is a left (resp. right) ring ideal $I \subset \ca$ that is closed in the ultraweak topology.

A \emph{total ideal} or \emph{two-sided ideal} $I$ of a von Neumann algebra $\ca$ is two-sided ring ideal $I \subset \ca$ that is closed in the ultraweak topology.
\end{definition}

Left, right, and total ideals correspond with projections.  Examples of left (resp. right) ideals are the sets given by $\ca p$ (resp. $p \ca$). These are the kernels of morphisms given by right (resp. left) multiplication by $p^\perp$.

\begin{theorem}[{\cite[Proposition 3.12]{Takesaki1979:TheoryOfOperatorAlgebrasI},\cite[Lecture 9, Corollary 6]{Lurie2011:261y}}]
Every left ideal $L \subset \ca$ of a von Neumann algebra $\ca$ is of the form $L = \ca p$ for a projection $p \in \cp(\ca)$.  Further, the projection $p$ is uniquely determined by $L$.
\end{theorem}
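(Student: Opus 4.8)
The plan is to produce the projection $p$ as the \emph{unit} of the von Neumann subalgebra $B := L \cap L^*$, and then to verify $L = \ca p$ via a support-projection computation. The uniqueness is a short formal consequence.

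First I would establish that $B = L \cap L^*$ is an ultraweakly closed $*$-subalgebra of $\ca$. Linearity and ultraweak closedness are immediate once one notes that the adjoint operation is ultraweakly continuous, so that $L^*$ is ultraweakly closed; self-adjointness of $B$ is then clear. For closure under multiplication I would exploit the ideal relations $\ca L \subseteq L$ and $L^* \ca \subseteq L^*$: for $a, b \in B$ we have $ab \in \ca L \subseteq L$ (since $b \in L$) and $ab \in L^* \ca \subseteq L^*$ (since $a \in L^*$), whence $ab \in B$. Since $B$ is an ultraweakly — equivalently weakly — closed $*$-subalgebra of $\cb(\ch)$, it is a von Neumann algebra, and as such it carries its own unit, a projection $p$ lying in $B \subseteq \ca$; concretely $p$ is the projection onto $\overline{B\ch}$, realised as the ultraweak limit (supremum) of an increasing approximate unit of the C\textsuperscript{*}-algebra $B$.

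With $p$ in hand, the inclusion $\ca p \subseteq L$ is immediate, since $p \in B \subseteq L$ and $L$ is a left ideal, so $\ca p \subseteq \ca L \subseteq L$. For the reverse inclusion, take $x \in L$ and observe that $x^* x \in L \cap L^* = B$: indeed $x^* x \in \ca L \subseteq L$ as $x \in L$, and $x^* x \in L^* \ca \subseteq L^*$ as $x^* \in L^*$. Because $p$ is a unit for $B$ we get $x^* x\,(1 - p) = 0$; multiplying on the left by $1 - p$ yields $\bigl(x(1-p)\bigr)^* \bigl(x(1-p)\bigr) = 0$, so $\lVert x(1-p)\rVert^2 = 0$ and hence $x = xp \in \ca p$. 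This gives $L = \ca p$. For uniqueness, if $\ca p = \ca q$ for projections $p, q$, then $p \in \ca q$ forces $pq = p$, and taking adjoints (both projections being self-adjoint) gives $qp = p$; symmetrically $qp = q$, so $p = q$.

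The main obstacle is the second step: establishing that the self-adjoint, ultraweakly closed subalgebra $B$ possesses its own unit projection $p \in \ca$. This is the one nonformal input, resting on the general fact that a weakly closed $*$-subalgebra of $\cb(\ch)$ contains the projection onto its essential subspace as its identity. Everything else — the ideal-theoretic bookkeeping for $B$ and the norm computation closing $L \subseteq \ca p$ — is routine once that projection is available.
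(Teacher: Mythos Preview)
The paper does not actually prove this statement; it is quoted from the literature with citations to Takesaki and to Lurie's lecture notes. Your argument is correct and is essentially the standard proof found in those references: form the hereditary subalgebra $B=L\cap L^*$, use that an ultraweakly closed $*$-subalgebra of $\cb(\ch)$ has a unit projection $p$, and then deduce $L=\ca p$ from $x^*x\in B$ via the positivity trick $(1-p)x^*x(1-p)=0$. Your identification of the one substantive input---existence of the unit in $B$---is accurate, and the uniqueness argument is fine.
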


Under this correspondence, total ideals are precisely those left or right ideals corresponding to central projections.

\begin{theorem}[{\cite[Proposition 3.12]{Takesaki1979:TheoryOfOperatorAlgebrasI}, \cite[Lecture 9, Corollary 8]{Lurie2011:261y}}]\label{idealcentral}
Every total ideal $I \subset \ca$ of a von Neumann algebra $\ca$ is of the form $I = z\ca z = z\ca = \ca z$ for a unique central projection $z \in \cp(\ca)\cap \cz(\ca)$.
\end{theorem}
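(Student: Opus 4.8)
The plan is to derive this from the immediately preceding classification of left ideals---that every ultraweakly closed left ideal $L$ of $\ca$ has the form $L = \ca p$ for a unique projection $p \in \cp(\ca)$---together with the elementary fact that every element of a unital C*-algebra is a linear combination of unitaries. First I would note that a total ideal $I$ is in particular an ultraweakly closed left ideal, so the left-ideal theorem supplies a unique projection $p \in \cp(\ca)$ with $I = \ca p$. The entire task then reduces to showing that this $p$ is central; once that is known, the three expressions $z\ca$, $\ca z$, $z\ca z$ coincide automatically.

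To show $p$ is central, I would exploit that $I$ is also a right ideal. For any unitary $u \in \ca$ one has $uIu^* \subseteq I$ (since $Iu^* \subseteq I$ by the right-ideal property, and then $u(Iu^*) \subseteq I$ by the left-ideal property) and likewise $u^*Iu \subseteq I$, whence $I = u(u^*Iu)u^* \subseteq uIu^*$; combining gives $uIu^* = I$. Since conjugation by $u$ is a $*$-automorphism of $\ca$, so that $u\ca u^* = \ca$, I would compute
\[
uIu^* \;=\; u(\ca p)u^* \;=\; (u\ca u^*)(upu^*) \;=\; \ca\,(upu^*) \Mdot
\]
As $upu^*$ is again a projection and $uIu^* = I = \ca p$, the uniqueness clause of the left-ideal theorem forces $upu^* = p$, i.e.\ $up = pu$, for every unitary $u \in \ca$.

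It remains to promote ``commutes with every unitary'' to ``central''. Here I would invoke that every element of $\ca$ is a linear combination of unitaries: a self-adjoint contraction $a$ satisfies $a = \tfrac{1}{2}(w + w^*)$ with $w = a + i\sqrt{1 - a^2}$ unitary, and a general element is a complex combination of two self-adjoints. Hence $p$ commutes with all of $\ca$, so $z := p \in \cp(\ca) \cap \cz(\ca)$. Centrality then gives $z\ca = \ca z = z\ca z$ immediately (using $z^2 = z$ and $za = az$), and $I = \ca p = \ca z$; uniqueness of $z$ is inherited from the uniqueness of $p$. The only nonroutine step is the passage from commuting with all unitaries to centrality, but this rests entirely on the standard unitary-spanning fact above; as a variant one could instead apply the symmetric right-ideal classification to write $I = q\ca$ and identify $p = q = z$ directly.
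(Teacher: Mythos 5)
Your proof is correct. The paper itself supplies no proof of this statement---it is quoted as background from the cited references (Takesaki, and Lurie's notes)---and your argument, which reduces to the immediately preceding left-ideal classification $I = \ca p$, uses unitary invariance of a two-sided ideal together with the uniqueness clause to force $upu^* = p$, and then promotes commutation with all unitaries to centrality via the standard fact that unitaries span a unital \cstar, is exactly the standard derivation used in those sources, with all steps (including the uniqueness of $z$, inherited from the uniqueness of $p$) checking out.
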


\bibliographystyle{amsplain}
\bibliography{thesispaper_CMP} 

\end{document}